\documentclass[12pt,oneside,reqno]{amsart}
\usepackage{amsmath,amssymb, amsthm,mathtools}
\usepackage[parfill]{parskip}
\usepackage[margin=1in]{geometry}
\usepackage{enumitem}
\usepackage{tikz}
\usepackage{tikz-cd} 
\usepackage{dsfont}
\usepackage{mathrsfs}
\usepackage{bbm}
\usepackage{dirtytalk}
\usepackage{bm}
\usepackage{microtype}
\usepackage{caption}
\usepackage{subcaption}
\usepackage{array}
\usepackage{longtable}
\usepackage{nicefrac}

\newcommand{\rank}{\text{rank }}

\newtheorem{theorem}{Theorem}
\newtheorem{lemma}[theorem]{Lemma}
\newtheorem{definition}[theorem]{Definition}
\newtheorem{prop}[theorem]{Proposition}

\newtheorem{corollary}[theorem]{Corollary}

\newcommand{\N}{\mathbb{N}}

\newcommand{\R}{\mathbb{R}}

\newcommand{\T}{\mathbb{T}}
\newcommand{\Z}{\mathbb{Z}}

\newcommand{\E}{\mathbb{E}}

\DeclareMathOperator{\Real}{Re}

\newcommand{\paren}[1]{\left( #1 \right)}

\newcommand{\brac}[1]{\left[ #1 \right]}

\newcommand{\abs}[1]{\left\vert#1\right\vert}

\newcommand{\set}[1]{\left\{#1\right\}}

\newcommand{\nlty}{\text{nullity }}

\DeclareMathOperator{\im}{im}
\DeclareMathOperator{\support}{support}

\newcommand{\mvleftarrow}[1]{\hspace{1.8em} \mathclap{\xleftarrow{\hspace{1em} \mathclap{#1}\hspace{1em}}}\hspace{1.8em}}

\usepackage{scalerel,stackengine}
\newcommand\pig[1]{\scalerel*[5.5pt]{\Big#1}{%
  \ensurestackMath{\addstackgap[1.5pt]{\big#1}}}}
\newcommand\pigl[1]{\mathopen{\pig{#1}}}
\newcommand\pigr[1]{\mathclose{\pig{#1}}}

\usepackage{pgfplots}
\pgfplotsset{compat=1.6}
\usetikzlibrary{calc,quotes,angles,arrows.meta,positioning, fillbetween,decorations,intersections,pgfplots.fillbetween,calligraphy,decorations.markings}
\usepackage{hyperref}

\title[A Cellular Representation of the Potts Lattice Higgs Model]{A Cellular Representation of \\ the Potts Lattice Higgs Model}

\author{Summer Eldridge}
\email{seldridge@gradcenter.cuny.edu}
\address{Department of Mathematics, Graduate Center, City University of New York, 365 5th Ave, New York, NY 10016, USA}
\author{Malin P. Forsstr{\"o}m}
\email{palo@chalmers.se}
\address{Mathematical Sciences, Chalmers University of Technology and University of Gothenburg, SE-412 96 G\"oteborg, Sweden}
\author{Benjamin Schweinhart}
\email{bschwei@gmu.edu}
\address{Department of Mathematical Sciences, George Mason University, Fairfax, VA 22030, USA}
\begin{document}

\begin{abstract}
The $i$-dimensional Potts lattice Higgs model is a random assignment of spins in $\Z_q$ to the $i$-dimensional cells of a cell complex induced by a Hamiltonian with a Potts interaction on the $(i+1)$-cells and an additional term playing the role of an external field. We develop a representation of this model as a pair of dependent plaquette percolations, and prove that Wilson line expectations can be expressed in terms of the probability of a topological event. As an application, we prove the existence of a phase transition for the Marcu--Fredenhagen ratio in the Potts lattice Higgs model on $\Z^d$ when $i=1.$    
\end{abstract}
\maketitle

\section{Introduction}

Lattice Higgs models, also called lattice gauge theories with matter, are a class of probability distributions studied in physics as discretized models of gauge fields in the presence of particles~\cite{FS79}. Models with $SU(n)$-valued fields are of the greatest interest, but simpler analogues are obtained by taking spins in the integers modulo $q$; the complexity is further reduced with a Potts interaction, as defined in~\cite{B88}. In the special case with $\mathbb{Z}_2$-valued fields, these models were already introduced in~\cite{wegner}, as examples of spin models with phase transitions and local symmetries, and also appear in quantum information theory as natural noised version of Kitaevs toric code, see, e.g., \cite{NSS21}
We will throughout refer to these models as Potts lattice Higgs models.

In this paper, we develop an analogue of the Fortuin-Kasteleyn random cluster representation (see, e.g.,~\cite{FK72} for Potts lattice Higgs models, both generalizing the random cluster representation of the Potts model with external field~\cite{G16} and building on recent work establishing a cellular representation of Potts lattice gauge theory~\cite{duncan2025sharp,duncan2025topological,shklarov,bernoulli-cell-complexes}. 
More specifically, we define a coupling between the $i$-dimensional Potts lattice Higgs model --- which assigns spins in $\Z_q$ to the $i$-dimensional cells of a cell complex --- and a new construction called \emph{coupled plaquette percolation} (CPP), consisting of a dependent pair of plaquette percolations of dimensions $(i+1)$ and $i.$ We prove that Wilson line expectations in the former can be expressed in terms of the probability of a topological event in the latter. In addition, the CPP on $\Z^d$ has a duality transformation defined on the level of states; this gives a concrete, geometric interpretation of the duality of the partition functions of the Potts lattice Higgs model on $\Z^3.$ As an application of the CPP, we prove the existence of a phase transition for the Marcu--Fredenhagen ratio. To simplify the arguments and keep the required knowledge of algebraic topology to a minimum, we assume that $q$ is prime and take free or periodic boundary conditions; extensions to more general models can be obtained via modifications similar to those in~\cite{prcm-24}.   
%
%
Of particular interest is the case where $i=1$ and $X$ is a finite subcomplex of the cubical complex $\Z^d$ formed by tesselating Euclidean space with unit cubes. In this context, the Potts lattice Higgs model assigns spins to the edges with a Potts field on the edges and a Potts interaction on the signed sum of edges around every plaquette. The CPP is defined on a pair of percolations: a $2$-plaquette percolation $P_2$ and a bond percolation $P_1,$ where the probability is weighted by the size of the relative one-dimensional cohomology with coefficients in $\Z_q$, denoted $H^1(P_2,P_1;\Z_q)$. This counts the number of spin assignments to the edges of $X$ that are compatible with the pair $\paren{P_2,P_1}$ in the sense that edges in $P_1$ are assigned spin $0$ the oriented sum of spins around each plaquette of $P_2$ adds up to $0$ modulo $q.$ When $d=3,$ the model is self-dual: there's a bijective measure-preserving mapping between the CPP and itself with different parameters.

Special cases of the Ising lattice Higgs model include the Ising model with external field ($i=0$ and $q=2$) and the Ising lattice Higgs model ($i=1$ and $q=2).$ The latter is an object of previous study in both the physics~\cite{FS79,HS91, NSS21,stahl2026slow} and in the mathematics~\cite{p2024phase,F24,F25b} literature. In particular, in~\cite{p2024phase}, it is shown that this model has a phase transition, which will be extended to \( q \neq 2 \) in this paper. Alternate surface representations have been developed for these models: a membrane expansion obtained via a high temperature expansion~\cite{HS91} and a random current expansion~\cite{F25a,A25}. In addition, we note that the coupling between the Ising lattice Higgs model and the CPP may be of interest to physicists studying the former model in computational experiments; it suggests a Swendsen--Wang-type algorithm~\cite{sw,edwards-sokal,pizzimenti2025generalized} whose dynamics may change along the phase boundaries and which may converge faster than other Monte Carlo algorithms in practice.


\subsection{Definitions and Notation}\label{sec:defs}

We start by introducing terminology and notation from algebraic topology. The cell complex $\Z^d$ is the tesselation of Euclidean space by unit cubes with integter points as vertices. $\Z^d$ is composed of $j$-dimensional cells (called \emph{$j$-cells} or \emph{$j$-plaquettes})  for $0\leq j\leq d$ where the $j$-cells are the $j$-dimensional faces of cubes in the tesselation. That is, $0$-cells are vertices, $1$-cells are edges, $2$-cells are unit squares, and so on. A $j$-dimensional subcomplex of $\Z^d$ is a union of cells of $\Z^d$ so that the dimension of each cell is at most $j.$ Note that if $\sigma$ is cell of a subcomplex $X$ and $\tau$ is a face of $\sigma,$ then $\tau$ is also a cell of $X.$  Another cell complex is the discrete torus $\T_N^d$ obtained from the subcomplex $\brac{0,N}^d$ of $\Z^d$ by identifying opposite faces. For a cell complex $X$ denote by $X^{\brac{j}}$ the collection of all $j$-cells of $X$ and the \emph{$i$-skeleton} $X^{\paren{j}},$ the subcomplex of $X$ consisting of all cells of dimensions less than or equal to $j.$ 

Given a cell complex $X$, we notate by $C^j(X; \Z_q)$ the group of functions $f$ from oriented $j$-cells in $X$ to the additive group $\Z_q$ of integers modulo $q$, with the property that if $-x$ is $x$ considered with the opposite orientation, $f(-x)=-f(x)$.  An element of $C^j(X;\,\Z_q)$ is called a ``\emph{$j$-cochain}" or a ``discrete $j$-form''. Dual to cochains are \emph{chains} which form the group $C_j(X;\,\Z_q)$ of formal sums of oriented $j$-cells with coefficients in $\Z_q$, with the relation that $-x$ is obtained from $x$ by reversing the orientation. By extending linearly, we can evaluate cochains on chains. 

The chain groups come with linear \emph{boundary maps} $\partial_j\colon C^j(X;\,\Z_q)\to C^{j-1}(X;\,\Z_q)$ defined by $\partial_j(x)=\sum y_k$, where $y_k$ are the set of oriented $(j-1)$-cells incident to $x$. The linear \emph{coboundary maps} $\delta_j\colon C^j(X;\,\Z_q)\to C^{j+1}(X;\,\Z_q)$ are defined by $\delta_j(f)(x)=f(\partial_{j+1}(x))$. Subscripts on the maps are generally omitted. The boundary and coboundary maps are used to defined homology and cohomology, as described at the beginning of Section~\ref{sec:topos} below. 

\begin{definition}[Potts lattice Higgs Model]%
Fix $i\in \Z_{\geq 1},q\in\Z_{\geq 2},{\beta_2}>0$ and ${\beta_1}>0,$ and let $X$ be a finite cell complex. The \emph{Potts lattice Higgs model} is the Gibbs measure $\mu=\mu_{{\beta_2},{\beta_1},q,i,X}$ on $C^{i}(X;Z_q)$ induced by the Hamiltonian
$$\mathcal{H}\paren{f}=-{\beta_2} \sum_{\sigma \in X^{\brac{i+1}}}I_0\paren{\delta f\paren{\sigma}}-{\beta_1} \sum_{\epsilon\in  X^{\brac{i}}}I_0\paren{f\paren{\epsilon}},$$
where $I_x(y)$ is the indicator that $y=x.$
\end{definition}

 If $i=1$ and $q =2,3$ this model is equivalent to the $\Z_q$ lattice Higgs model with unitary gauge~\cite{FS79,F24} with Hamiltonian
$$-\beta_2'\sum_{\sigma\in X^{\brac{2}}}\Real e^{\frac{2\pi i}{q} df(\sigma)}-\beta_1'\sum_{\epsilon\in X^{\brac{1}}}\Real e^{\frac{2\pi i}{q}f(\epsilon)}.$$
This follows from the fact that for $q=2,3$, and $x\in \Z_q$, $x \mapsto \Real{e^{(\frac{2\pi i}{q}x)}}$ is an affine transformation of $I_0(x)$.

We note that there is an alternative definition of the Potts lattice Higgs Model~\cite{B88} which assigns probabilities to a pair $\paren{f,g}$ with $f\in C^{i}(X;\,\Z_q)$ and $g\in C^{i-1}(X;\,\Z_q).$ Specifically, one can consider the Gibbs measure $\nu$ induced by the Hamiltonian 
\begin{equation}
\label{eq:generalgauge}
    \tilde{\mathcal{H}}\paren{f,g}=-{\beta_2'} \sum_{\sigma \in X^{\brac{i+1}}}I_0\paren{\delta f\paren{\sigma}}-{\beta_1'} \sum_{\epsilon\in  X^{\brac{i}}}I_0\paren{f\paren{\epsilon}-\delta g\paren{\epsilon}}.
\end{equation}
We call $\nu$ the Potts lattice Higgs model \emph{with general gauge}. 
 For any $h\in C^{i-1}(X;\,\Z_q)$, $\nu(f,g)$ is invariant under the gauge transformation $f\to f+\delta h,\; g\to g+h$. The map $(f,g)\to (f-\delta g,0)$ covers its image evenly, so any function invariant under such a transformation (e.g. a Wilson line variable) will have the same expectation in both models. While Wilson line variables are not gauge invariant, the Wilson line expectation for $\mu$ equals the expectation of its analogue $$W_\gamma'(f,g) \coloneqq \exp\paren{\frac{2\pi i}{q}(f(\gamma)-g(\partial\gamma)}$$ for $\nu$ (see, e.g., ~\cite{F24}).

Next, we define the CPP as a dependent pair of random percolation subcomplexes. 
\begin{definition}[Percolation Subcomplex]
 A $j$-dimensional \emph{percolation subcomplex} of $X$ is a subcomplex $P$ of $X$ satisfying
 $$X^{\paren{j-1}}\subseteq P \subseteq X^{\paren{j}}.$$
That is, $P$ contains all cells of dimension less than $j$ and a subset of the the $j$-dimensional cells. When convenient, we treat a $j$-dimensional percolation subcomplex $P$ as a binary assignment to the $j$-cells, with $P(x)=1$ iff $x\in P$ for $x\in X^{\brac{j}}.$ We will write $\abs{P}$ for the number of $j$-cells of $P.$
\end{definition}
\begin{definition}\label{defn:compatible}
Let $(P_2,P_1)$ be a pair of percolation subcomplexes, where the dimensions of $P_2$ and $P_2$ are $(i+1)$ and $i$ respectively. An $i$-cochain ${f\in C_i(X;\,\Z_q)}$ is \emph{compatible} with $\paren{P_2,P_1}$ if $f\paren{\epsilon}=0$ for all $i$-cells $\epsilon$ of $P_1$ and $\delta f\paren{\sigma}=0$ for all $(i+1)$-cells $\sigma$ of $P_2.$  
\end{definition}

The cochains compatible with $\paren{P_2,P_1}$ form an additive group, the relative cocycle group, denoted $Z^i\paren{P_2,P_1;\,\Z_q}.$ As we will explain below, this group coincides with the $i$-dimensional relative cohomology $H^{i}(P_2,P_1;\mathbb{Z}_q)$  because $P_1$ is a percolation subcomplex. For now, we only need that the symbol $\abs{H^{i}(P_2,P_1;\mathbb{Z}_q)}$ counts the number of compatible $i$-cochains. Our cellular representation weights a pair of percolation subcomplexes by this quantity.

\begin{definition}[Coupled Plaquette Percolation (CPP)]
\label{def:CPP}
For a finite cell complex $X,$ a prime integer $q,$ a non-negative integer $i$, and $0\leq p_2,p_1\leq 1$ define the \emph{Coupled Plaquette Percolation} measure $\rho=\rho_{p_2,p_1,q,i,X}$ on
 percolation subcomplexes \( P_2\) and \( P_1\) of dimensions \( (i+1) \) and \( i\) respectively, by
$$\rho\paren{P_2,P_1}\propto   p_2^{|P_2|}\paren{1-p_2}^{|X^{\brac{i+1}}|-|P_2|} p_1^{|P_1|}\paren{1-p_1}^{|X^{\brac{i}}|-|P_1|} \abs{H^{i}(P_2,P_1;\mathbb{Z}_q)}.$$

\end{definition}

We will often use the convenient reparametrization $k_2=\frac{p_2}{1-p_2}$ and $k_1=\frac{p_1}{1-p_1}.$ In these coordinates, the law of the CPP  $\rho=\rho\paren{k_2,k_1,q,i,X}$
is 
$$
    \rho\paren{P_2,P_1}\propto  k_2^{|P_2|}  k_1^{|P_1|}\abs{H^{i}(P_2,P_1;\mathbb{Z}_q)}.
$$
The CPP has a number of interesting special cases. When $p_1=0$, then $P_2$ is distributed as the $(i+1)$-dimensional plaquette random cluster model with coefficients in $\Z_q$~\cite{bernoulli-cell-complexes}. Moreover, when $X$ is simply connected and $p_2=1$, then $P_1$ is distributed as the $i$-dimensional PRCM with parameter $p_1.$ On the other hand, when either $p_1=1$ or $p_2=0$ the appropriate marginal distributions are independent plaquette percolations (see Proposition~\ref{prop:10} below). Fixing a states for either complex results in a random complex that can be interpreted as a weighted plaquette random cluster model.

\subsection{Main Results} \label{sec:main}

We begin by establishing the coupling between the Potts lattice Higgs model and the CPP. Denote by $\Xi^j=\Xi^j\paren{X}$ the collection of $j$-dimensional percolation subcomplexes of~$X.$

\begin{theorem} \label{thm:coupling} 
Let $X$ be a finite cell complex, $q$ be a prime integer, and $\beta_2,\beta_1\geq 0.$ If \(p_2=1-e^{-{\beta_2}},\) \( p_1=1-e^{-{\beta_1}}\),  \( k_2=\frac{p_2}{1-p_2}=e^{\beta_2}-1,\) and \( k_1=\frac{p_1}{1-p_1}=e^{\beta_1}-1\), then there is a coupling $\kappa=\kappa_{k_2,k_1,q,i,X}$, ${\kappa \colon C^i(X;\,\Z_q)\times \Xi^{i+1}\times \Xi^{i}\to \brac{0,1}}$, defined by
\begin{align*}
    \kappa(f,P_2,P_1)&\propto \prod_{\epsilon\in X^{\brac{i}}}\pigl[I_0\bigl(P_1(\epsilon)\bigr)+k_1\cdot P_1(\epsilon)\cdot I_0\bigl(f(\epsilon)\bigr)\pigr]\\ &\qquad \cdot \prod_{\sigma\in X^{\brac{i+1}}}\pigl[I_0\bigl(P_2(\sigma)\bigr)+k_2\cdot P_2(\sigma)\cdot I_0\bigl(\delta f(\sigma)\bigr)\pigr]
\end{align*}
which satisfies the following. 
\begin{itemize}
    \item The the marginal distribution on $f$ is $\mu_{\beta_2,\beta_1,q,i,X}$ (the Potts lattice Higgs model). 
    \item The marginal distribution on $(P_2,P_1)$ is $\rho_{p_2,p_1,q,i,X}$ (the CPP). 
    \item The conditional distribution on $\paren{P_2,P_1}$ given $f$  is independent percolation with probability $p_2$ on the set of $(i+1)$-cells so that $\delta f \paren{\sigma}=0$ and probability $p_1$ on the set of $i$-cells such that~$f\paren{\epsilon}=0.$   
    \item The conditional distribution of $f$ given $(P_2,P_1)$ is the uniform measure on $Z^{i}(P_2,P_1;\mathbb{Z}_q).$
\end{itemize}
\end{theorem}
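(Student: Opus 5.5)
This is an Edwards--Sokal type argument: we verify each of the four properties by summing or conditioning the unnormalized weight $\kappa$ directly. Write $w(f,P_2,P_1)$ for the product on the right-hand side. Each factor is a function of a single cell, so $w$ factorizes over cells. For fixed $f$, the $\epsilon$-factor equals $1$ when $P_1(\epsilon)=0$. When $P_1(\epsilon)=1$ it equals $k_1 I_0(f(\epsilon))$, and this forces $f(\epsilon)=0$ whenever $\epsilon\in P_1$. The $(i+1)$-cells behave the same way with $\delta f$ in place of $f$. Hence
\[
w(f,P_2,P_1)=k_2^{|P_2|}k_1^{|P_1|}\,\mathbf{1}\bigl[f\in Z^i(P_2,P_1;\Z_q)\bigr].
\]
This identity drives the whole proof.

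\textbf{Marginal on $f$.} Sum $w$ over $(P_2,P_1)$ using the product structure, cell by cell. For an $i$-cell $\epsilon$, the sum of its factor over $P_1(\epsilon)\in\{0,1\}$ is $1+k_1I_0(f(\epsilon))=e^{\beta_1 I_0(f(\epsilon))}$, since $k_1=e^{\beta_1}-1$. Likewise each $(i+1)$-cell $\sigma$ contributes $e^{\beta_2 I_0(\delta f(\sigma))}$. Multiplying over all cells gives $e^{-\mathcal H(f)}$, so the marginal on $f$ is $\mu$.

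\textbf{Conditional law of $(P_2,P_1)$ given $f$.} Fixing $f$ and normalizing the same product cell by cell, each cell is independent. An $i$-cell with $f(\epsilon)\neq0$ is forced to have $P_1(\epsilon)=0$. An $i$-cell with $f(\epsilon)=0$ is open with probability $k_1/(1+k_1)=p_1$. The same holds for the $(i+1)$-cells with $p_2$, which is exactly the third bullet.

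\textbf{Marginal on $(P_2,P_1)$ and conditional law of $f$.} Sum the displayed identity over $f$. This gives $k_2^{|P_2|}k_1^{|P_1|}\,|Z^i(P_2,P_1;\Z_q)|$. To match Definition~\ref{def:CPP}, I would replace $|Z^i|$ by $|H^i|$, using the identification $Z^i(P_2,P_1)=H^i(P_2,P_1)$ asserted after Definition~\ref{defn:compatible}. The reason is that $P_1$ contains all $(i-1)$-cells, so any relative $(i-1)$-cochain vanishes on every $(i-1)$-cell and is therefore zero; hence there are no relative coboundaries. Finally, converting from the $k$-parametrization to the $p$-parametrization only multiplies by a constant. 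Fixing $(P_2,P_1)$ in the displayed identity shows that $f$ is uniform on $Z^i(P_2,P_1;\Z_q)$.

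There is no substantial obstacle here. The only step needing care is the identification $Z^i=H^i$ for relative cochains. One must set up the relative cochain complex as cochains on $P_2$ vanishing on $P_1$, and note that $C^{i-1}(P_2,P_1)=0$ because $P_1$ contains all $(i-1)$-cells. The nonnegativity of $k_1,k_2$ (from $\beta_1,\beta_2\ge0$) is what makes $\kappa$ a probability measure.
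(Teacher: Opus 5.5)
Your proposal is correct and follows essentially the same route as the paper. Like the paper, you sum the cell-wise factorized weight over $(P_2,P_1)$ to recover $e^{-\mathcal H(f)}$, reduce the weight to $k_2^{|P_2|}k_1^{|P_1|}\mathbf{1}\bigl[f\in Z^i(P_2,P_1;\Z_q)\bigr]$ and identify $Z^i$ with $H^i$ via $C^{i-1}(P_2,P_1)=0$, and read off both conditionals by fixing one coordinate.
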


\begin{figure}[htb]
    \centering
    \includegraphics[width=.4\textwidth]{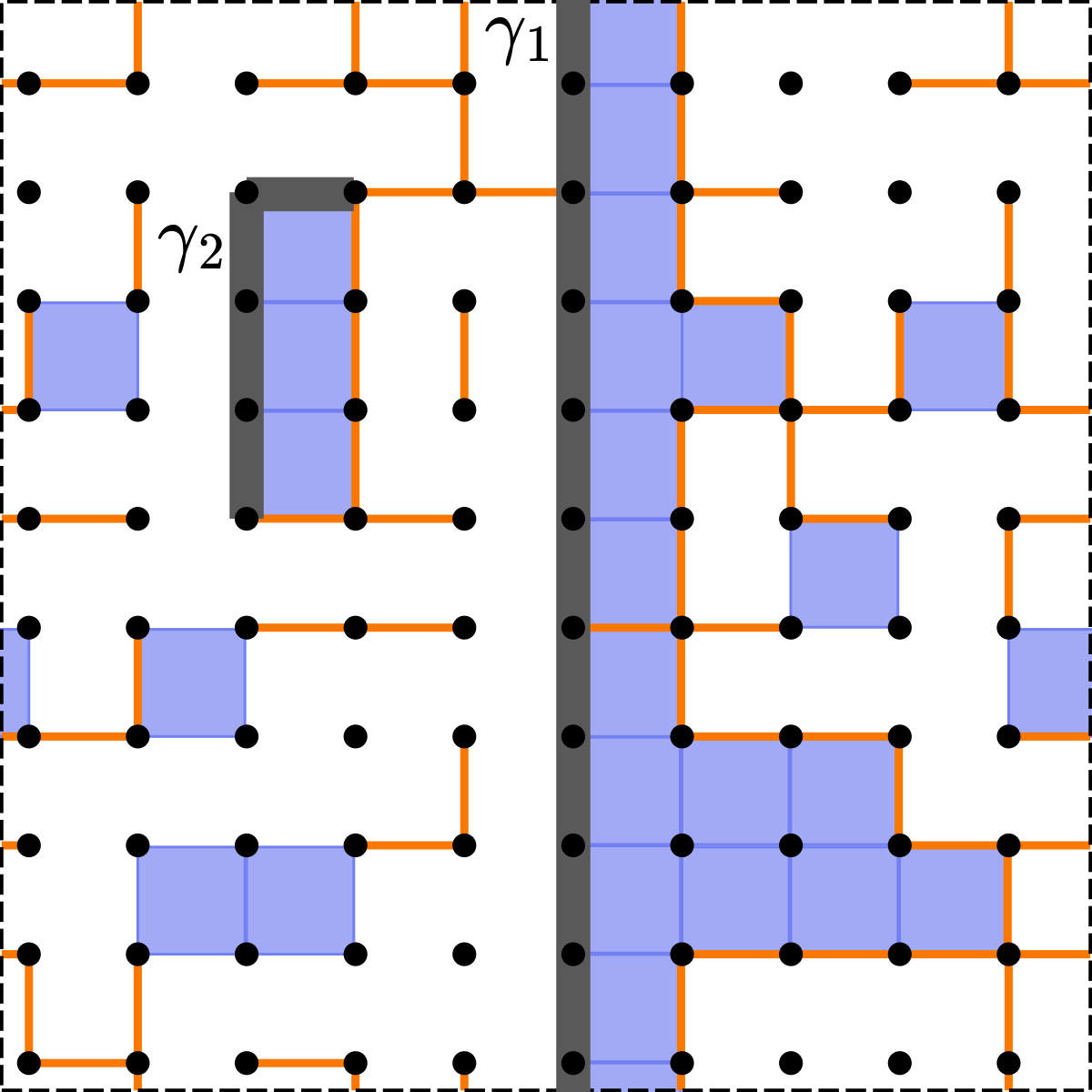}\qquad
    \includegraphics[width=.4\textwidth]{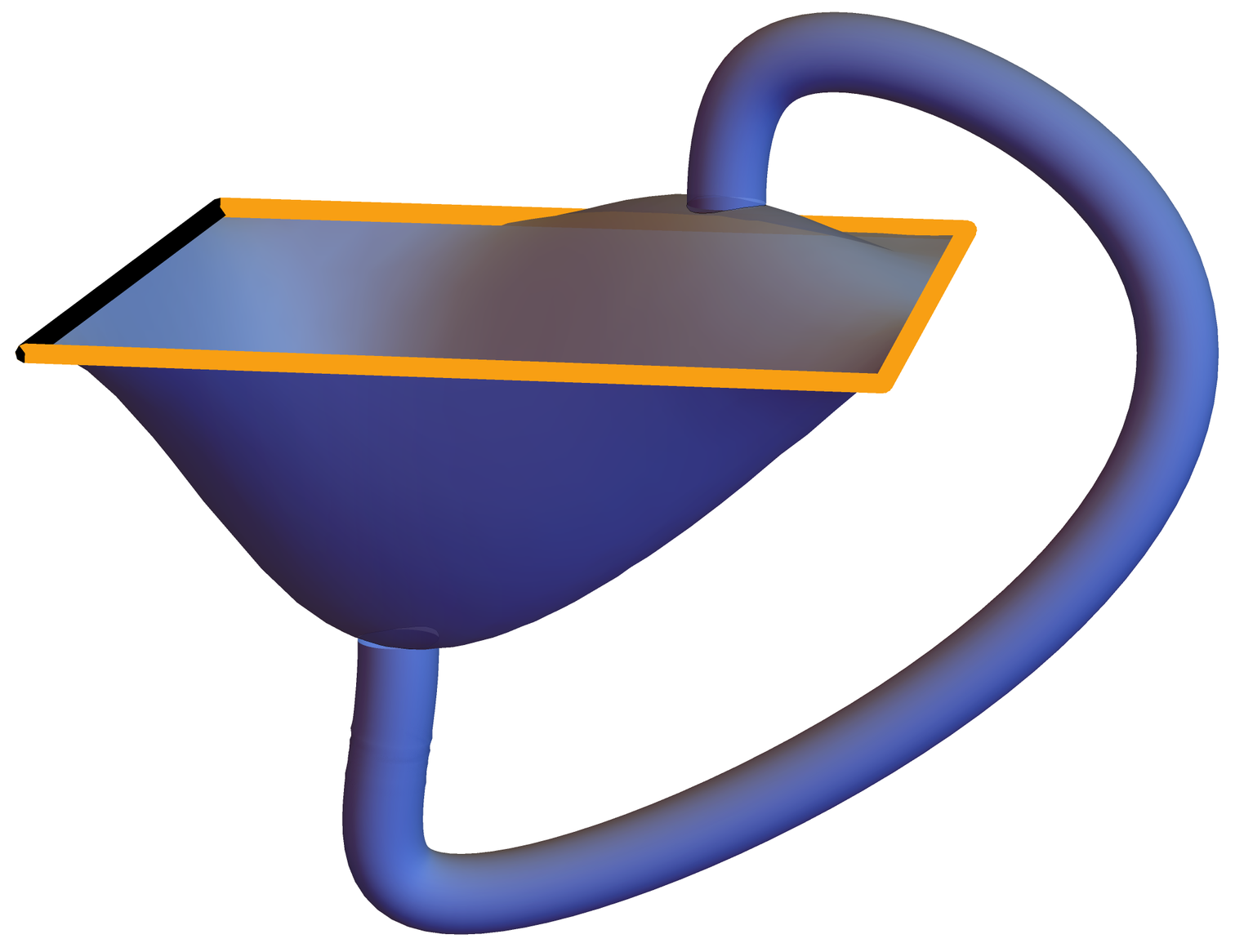}
    \caption{Two illustrations of the event $V_{\gamma}.$ On the left, $P_2$ is depicted by the collection of light blue squares and $P_1$ by the orange bonds. The events $V_{\gamma_1}$ and $V_{\gamma_2}$ both occur, where $\gamma_1$ and $\gamma_2$ are the loop and path shown in dark gray. On the right, $P_2$ is shown by the blue surface and $P_1$ by the orange line segment. $\gamma$ is depicted in black. Here, $V_{\gamma}$ occurs when homology is taken with coefficients in $\Z_2$ but the plaquettes in the orange surface cannot be compatibly oriented to yield a null-homology when coefficients are taken in a field of odd characteristic. The figure on the right was adapted from Figure 1 of~\cite{duncan2025sharp} which was in turn inspired by Figure 1 of~\cite{aizenman1983sharp}.} 
    \label{fig:vgammarelative}
\end{figure}

The above coupling can be modified to provide a cellular representation of Potts lattice Higgs model in general gauge, as explained in Section~\ref{sec:generalgauge} below. In the special case $i=0,$ the measure $\mu_{\beta_2,\beta_1,q,i,X}$ is the Potts model with external field strength~$k_1.$ Traditionally, the random cluster model is extended to a graphical representation of the Potts model with external field via the introduction of a ``ghost vertex''~\cite{G16}. We describe why this is equivalent to the $i=1$ case of the CPP in Section~\ref{sec:ghost}.


An importable observable in the Potts lattice Higgs model is the Wilson line observable, which we now define, together with a topological event which provides an analogue of the Wilson line observable for the CPP.

\begin{definition}[$V_\gamma$ and $W_\gamma$]
Fix an $i$-chain $\gamma=\sum c_j x_j\in C_i\paren{X;\Z_q}$. 
For $f \sim \mu_{\beta_2,\beta_1,q,i,X}$, we let~$W_\gamma(f)$ be the random variable 
$\exp \bigl( \frac{2\pi i}{q} f(\gamma)\bigr).$ 
%
If $i=1$, $c_j =1  $ for all \( j \), and the  edges $x_j$ form a oriented loop, then \( W_\gamma\) is referred to as a  \emph{Wilson loop variable}. Similarly, if  $i=1$, $c_j=1 $ for all \( j \), and the edges $x_j$ form an oriented path, then \( W_\gamma\) is referred to as a \emph{Wilson line variable}. 

For the measure $\rho$, $V_{\gamma}$ is the event that there exists an $(i+1)$-chain $\tau$ so that $\gamma-\partial\tau$ is supported on $P_1,$ or equivalently that $\brac{\gamma}=0$ in $H_{i}(P_2,P_1;\mathbb{Z}_q)$ (see Figure~\ref{fig:vgammarelative}), where the relative homology $H_{i}(P_2,P_1;\mathbb{Z}_q)$ is defined in Section~\ref{sec:topos} below.%
\end{definition}
 

Note that $V_\gamma$ is an increasing event; adding more $i$-cells to $P_1$ increases the number of $i$-chains in $C_i(P_1;\,\Z_q)$, while adding more $(i+1)$-cells to $P_2$ increases the number of $i$-chains that are boundaries of $(i+1)$-chains in $C_{i+1}(P_2;\,\Z_q)$.

Our next result expresses the expectation of a Wilson line observable \( W_\gamma\) in terms of the probability of the event \( V_\gamma \) in the CPP. Observe that that taking the limit as $\beta_1\to\infty$ recovers Theorem~5 of~\cite{duncan2025topological}.

\begin{theorem}\label{thm:W=V}
    Let $X$ be a finite cell complex, $q$ be a prime integer, $\beta_2,\beta_1\geq 0,$ and $\gamma\in C_i(X;\,\Z_q).$ If~$\mu=\mu_{\beta_2,\beta_1,q,i,X}$ and $\rho=\rho_{p_2,p_1,q,i,X}$, where $p_2=1-e^{-{\beta_2}}$ and $p_1=1-e^{-{\beta_1}}$, then $\mathbb{E}_{\mu}\paren{W_\gamma}=\rho\paren{V_{\gamma}}.$
\end{theorem}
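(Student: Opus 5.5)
We use the coupling $\kappa$ of Theorem~\ref{thm:coupling} and condition on the percolation configuration. Since the marginal of $f$ under $\kappa$ is $\mu$, and the marginal of $(P_2,P_1)$ is $\rho$, the tower property gives
\begin{equation*}
\mathbb{E}_\mu\paren{W_\gamma}=\sum_{(P_2,P_1)}\rho(P_2,P_1)\,\mathbb{E}_\kappa\brac{W_\gamma(f)\mid P_2,P_1}.
\end{equation*}
By the last item of Theorem~\ref{thm:coupling}, $f$ given $(P_2,P_1)$ is uniform on the group $G=Z^i(P_2,P_1;\Z_q)$. So the proof reduces to showing
\begin{equation*}
\frac{1}{\abs{G}}\sum_{f\in G}\exp\paren{\tfrac{2\pi i}{q}f(\gamma)}=\mathbbm{1}_{V_\gamma}(P_2,P_1).
\end{equation*}

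The map $\chi\colon G\to\Z_q$, $f\mapsto f(\gamma)$, is a group homomorphism. Composed with $x\mapsto e^{2\pi i x/q}$, it is a character of the finite abelian group $G$. The average of a character over the group is $1$ if the character is trivial and $0$ otherwise. It therefore suffices to show that $f(\gamma)=0$ for all $f\in G$ if and only if $V_\gamma$ occurs.

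The direction ($\Leftarrow$) is direct. If $\gamma=\partial\tau+c$ with $\tau\in C_{i+1}(P_2;\Z_q)$ and $c\in C_i(P_1;\Z_q)$, then for $f\in G$ we have $f(\gamma)=\delta f(\tau)+f(c)=0$. This holds because $\delta f$ vanishes on the $(i+1)$-cells of $P_2$ and $f$ vanishes on the $i$-cells of $P_1$.

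The direction ($\Rightarrow$) is the main step and uses that $q$ is prime, so $\Z_q$ is a field. Let $B=\partial C_{i+1}(P_2;\Z_q)+C_i(P_1;\Z_q)\subseteq C_i(X;\Z_q)$; note $C_i(P_2)=C_i(X)$, since $P_2$ contains the full $i$-skeleton. By definition, $G$ is exactly the annihilator $B^\perp$ of $B$ under the evaluation pairing between $C^i$ and $C_i$. Over a field this pairing is perfect, since cells give dual bases. Hence $(B^\perp)^\perp=B$. So if $f(\gamma)=0$ for every $f\in B^\perp$, then $\gamma\in B$, which is precisely $V_\gamma$.

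This double-annihilator step is the only point needing care; with it the claim follows. Summing over $(P_2,P_1)$ then gives $\mathbb{E}_\mu(W_\gamma)=\rho(V_\gamma)$.
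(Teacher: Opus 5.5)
Your proof is correct and follows the same route as the paper. You condition on $(P_2,P_1)$ via the coupling $\kappa$, use that $f$ is uniform on $Z^i(P_2,P_1;\Z_q)$, and show that $W_\gamma\equiv 1$ on $V_\gamma$ while it averages to zero off $V_\gamma$. The differences are cosmetic: the paper uses the Universal Coefficient Theorem to find a cochain with $f(\gamma)\neq 0$ and then checks that $f(\gamma)=a_1+\dots+a_m$ is uniform, whereas you use the double-annihilator identity $(B^\perp)^\perp=B$ and orthogonality of characters, which is self-contained and slightly cleaner.
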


Let $\mu_{\beta_2,\beta_1,q,i,\Z^d}$ be the weak limit of the measures $\mu_{\beta_2,\beta_1,q,i,\Lambda_N}$ where $\Lambda_N=\brac{-N,N}^d$ (by definition, these measures have free boundary conditions). This infinite volume limit exists by standard arguments. Our arguments can easily be extended to infinite volume limits obtained from other boundary conditions, but we restrict attention to the free measure for simplicity.  

We show that for \( i= 1\) the  Potts lattice Higgs model exhibits a phase transition marked by a change in the behavior of a ratio between Wilson line expectations. To be able to state this theorem, we first define an observable in terms of the decomposition of a loop into two paths.

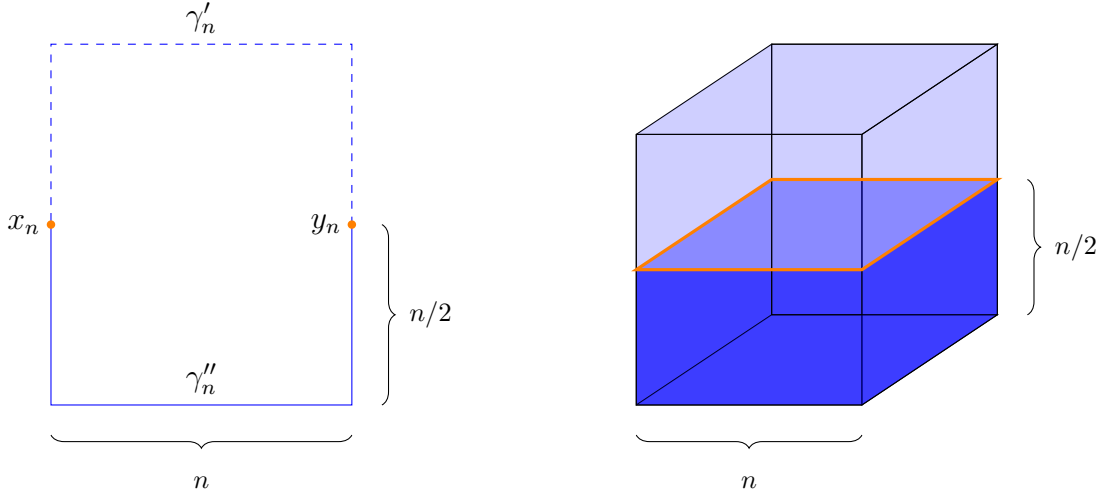
\begin{figure}[t]\centering
    \begin{subfigure}[t]{0.45\textwidth}\centering
	\begin{tikzpicture}[scale=.8] 
		
		\draw[blue] (0,0) node[left] {\color{black}$x_n$}  -- (0,-3)-- (5,-3) node[midway,above] {\color{black}$\gamma''_n$}  -- (5,0)  node[left] {\color{black}$y_n$};  
		
		\draw[dashed,blue] (5,0)  -- (5,3) -- (0,3) node[midway,above] {\color{black}$\gamma_n'$} -- (0,0);  
		\fill[orange] (0,0) circle (2pt);
		\fill[orange] (5,0) circle (2pt);
        
        \draw [decorate,decoration={brace,amplitude=5pt,mirror,raise=2.2ex}]
  (0,-3) -- (5,-3) node[midway,yshift=-2.5em]{\footnotesize $n$};
		
		\draw [decorate,decoration={brace,amplitude=5pt,mirror,raise=2.2ex}]
  (5,-3) -- (5,0) node[right, midway,xshift=1.5em]{\footnotesize $n/2$};

	\end{tikzpicture}
	
	\caption{The paths \( \gamma_n' \)~(solid) and~\( \gamma''_n \)~(dashed)  appearing in the definition of the Marcu--Fredenhagen ratio.}
	\label{figure: Wilson lines MF}
    \end{subfigure}
    \hspace{2ex}
    \begin{subfigure}[t]{0.45\textwidth}\centering
    \begin{tikzpicture}[scale=0.6]

        \draw[fill=blue,fill opacity=.1] (0,0) -- (0,3) -- (5,3) -- (5,0); 
        \draw[fill=blue,fill opacity=.1] (3,2) -- (3,5)-- (8,5) -- (8,2);
        \draw[fill=blue,fill opacity=.1] (0,0) -- (0,3) -- (3,5)--(3,2);
        \draw[fill=blue,fill opacity=.1]  (5,0) -- (5,3) -- (8,5)--(8,2);
        \draw[fill=blue,fill opacity=.1] (0,3) -- (5,3) -- (8,5) -- (3,5);

        \draw[fill=blue,fill opacity=.6] (0,0) -- (0,-3) -- (5,-3) -- (5,0);  
        \draw[fill=blue,fill opacity=.4] (3,2) -- (3,-1)-- (8,-1) -- (8,2);
        \draw[fill=blue,fill opacity=.4] (0,0) -- (0,-3) -- (3,-1)--(3,2);
        \draw[fill=blue,fill opacity=.6] (5,0) -- (5,-3) -- (8,-1)--(8,2);
        \draw[fill=blue,fill opacity=.4] (0,-3) -- (5,-3)--(8,-1) -- (3,-1);

        \draw[very thick, orange] (0,0) -- (3,2) -- (8,2) -- (5,0) -- cycle;
        
        \draw [decorate,decoration={brace,amplitude=5pt,mirror,raise=2.2ex}]
  (0,-3) -- (5,-3) node[midway,yshift=-2.5em]{\footnotesize $n$};
		
		\draw [decorate,decoration={brace,amplitude=5pt,mirror,raise=2.2ex}]
  (8,-1) -- (8,2) node[right, midway,xshift=1.5em]{\footnotesize $n/2$};

  \path[] (-1.5,0) circle (2pt);
    \end{tikzpicture}
    \caption{The oriented surfaces \( q_n'\) and \( q_n''\) that would be natural to use in a definition of the MF ratio for \( i = 2.\)}
    \end{subfigure}
    \caption{In the figures above, we draw the paths and surfaces used in the definitions of the Marcu--Fredenhagen ratio for \( i = 1\) and \( i = 2\) respectively.}
	\end{figure}

\begin{definition}[The Marcu--Fredenhagen ratio for \( i = 1\)]\label{def: MF}
   Let $q$ be a prime integer, \( i = 1, \) and $\beta_2,\beta_1\geq 0,$ and let $\mu = \mu_{\beta_2,\beta_1,q,i,\Z^d}.$
    %
    Further, let $n\in 2\N$ and set $\gamma_n=\partial \bigl(\brac{0,n}^2\times\set{0}^{d-2}$ and let $\gamma_n'=\gamma_n'\paren{R,T}$ and $\gamma_n''=\gamma_n''$ be the paths formed by the upper and lower halves of $\gamma_n$ (see Figure~\ref{figure: Wilson lines MF}). The \emph{Marcu--Fredenhagen ratio} is defined by 
    
    $$
        R\paren{\beta_2,\beta_1,n}
        =
        \frac{\mathbb{E}_{\mu}({W_{\gamma_n'}})\mathbb{E}_{\mu}({W_{\gamma_n''}})}{\mathbb{E}_{\mu}({W_{\gamma_n}})}=\frac{\mathbb{E}_{\mu}({W_{\gamma_n'}})^2}{\mathbb{E}_{\mu}({W_{\gamma_n}})}.
    $$
\end{definition} 

The quantity $R\paren{\beta_2,\beta_1,n}$ is thought to be relevant to the physics of the Euclidean lattice Higgs model~\cite{fredenhagen1988dual,bricmont1985statistical}, and is predicted to exhibit a phase transition marked by whether or not its limit is zero (see Figure~\ref{figure: phase diagram}). The asymptotics of similar ratios where $\gamma_n'+\gamma_n''$ is taken to be a growing rectangular boundary with a fixed aspect ratio are also of interest, even in the case when \( \gamma_n' \) and \( \gamma_n''\) do not have equal length, and are conjectured to have the same phase diagram \cite{GLIOZZI2006120,fredenhagen1988dual,bricmont1985statistical}. Our arguments can be modified to work in this context. 
For simplicity, we assume that $\gamma_n$ is a square. 

We note that by Theorem~\ref{thm:W=V}, we can equivalently define the Marcu--Fredenhagen ratio using the equivalent topological quantity~\( V_\gamma. \)

\begin{definition}[The topological Marcu--Fredenhagen ratio]\label{def: MF top}
 Let $q$ be a prime integer, \( i = 1, \) and $p_2,p_1\in (0,1],$ and let $\rho = \rho_{p_2,p_1,q,i,\Z^d}.$
    Further, let \( (\gamma_n',\gamma_n'')\) be as in Definition~\ref{def: MF}.
    The \emph{topological Marcu--Fredenhagen ratio} is defined by
    $$
        \hat{R}\paren{p_2,p_1,n}
        =
        \frac{\rho({V_{\gamma_n'}})\rho({V_{\gamma_n''}})}{\rho({\gamma_n})}=\frac{\rho({V_{\gamma_n'}})^2}{\rho({V_{\gamma_n}})}.
        $$
\end{definition}

	\begin{figure}[t]
    \begin{tikzpicture}[scale=1.2]
    \begin{axis}[
        axis x line=middle,axis y line=middle, ymin=-.05, ymax=1.1, xmin=-.05, xmax=1.1, ticks=none, 
        xlabel={$\beta_2$}, ylabel={$\beta_1$},
        x label style={at={(axis description cs:.85,0.05)},scale=.8},
        y label style={at={(axis description cs:0.05,.85)},scale=.8}, 
        every axis/.append style={font=\footnotesize}, ]

\addplot[domain=0.49:1, samples=201, name path=E, draw=none]{0};
\addplot[domain=0.49:1, samples=201, name path=F, draw=none]{.3*(x-.49)^(1/2)};
\addplot [fill=blue, fill opacity = .2] fill between [of=E and F];

\addplot[domain=0.09:0.091, samples=201, name path=C, draw=none]{1000*(x-0.09)};
\addplot[domain=0.0:0.001, samples=201, name path=D, draw=none]{1000*x};
\addplot [fill=blue, fill opacity = .2] fill between [of=C and D];

\addplot[domain=0:1, samples=201, name path=A,draw=none]{1};  
\addplot[domain=0:1, samples=201, name path=B,draw=none]{0.8};
\addplot [fill=blue, fill opacity = .2] fill between [of=A and B];

 \addplot[domain=0:0.35, samples=100, color=black, dashed] {0.71-2*x^2};
               
\addplot[domain=0.35:0.455, samples=100, color=black] {0.71-2*x^2};
               
\addplot[domain=0.45:0.491, samples=100, color=black]{0.3-50*(x-0.45)^1.6};
               
\addplot[domain=0.45:1, samples=100, color=black]{0.72-2*0.42^1.8-0.1*(x-0.42)+0.14*(x-0.42)^2};

\end{axis}

   		\draw (1.7,1.9) node[] {\scriptsize Confinement};
   		\draw (1.7,1.6) node[] {\scriptsize phase};
   		\draw (5.3,0.9) node[] {\scriptsize Free phase};
   		\draw (4.2,3.7) node[] {\scriptsize Higgs phase};

   		\draw (4.2,3.35) node[] {\tiny \( R\paren{\beta_2,\beta_1,n} \gtrsim 0\)};
   		\draw (1.7,1.25) node[] {\tiny \(  R\paren{\beta_2,\beta_1,n} \gtrsim 0\)};
   		\draw (5.3,0.55) node[] {\tiny \(   R\paren{\beta_2,\beta_1,n} \sim 0\)};

	\end{tikzpicture} 
    \caption{\label{figure: phase diagram} Conjectured limiting behavior of \( R\paren{\beta_2,\beta_1,n} \). We will not address the difference between the Confinement phase and the Higgs phase here. We propose to rigorously establish the diagram in the blue regions.}
	\end{figure}
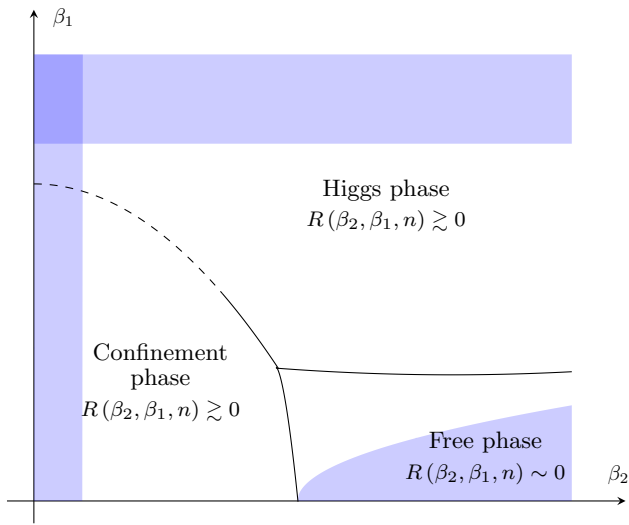
One of the authors recently proved that the Marcu--Fredenhagen ratio exhibits a non-trivial phase transition in the \( \mathbb{Z}_2\) lattice Higgs model~\cite{p2024phase}; we will gain a more detailed understanding for the Potts lattice Higgs model as illustrated by the shaded regions in Figure~\ref{figure: phase diagram}.

\begin{theorem}\label{thm:nontrivial}
    Let $q$ be a prime integer, \( i = 1, \) and $\beta_2,\beta_1\geq 0,$ and let $\mu = \mu_{\beta_2,\beta_1,q,i,\Z^d}.$
    Consider the Marcu--Fredenhagen ratio \( R \) as defined in Definition~\ref{def: MF}. 
    Then the following holds.
\begin{enumerate}
    \item If $\beta_2$ is sufficiently large, then there exists a $\beta_1'$ so that if $\beta_1<\beta_1'$, we have $$ \lim_{n\to \infty} R\paren{\beta_2,\beta_1,n}=0.$$ 
    \item If $\beta_2$ is sufficiently small, then $$\liminf_{n\to\infty}R\paren{\beta_2,\beta_1,n}>0.$$ 
    \item If $\beta_1$ is sufficiently large, then $$\liminf_{n\to\infty}R\paren{\beta_2,\beta_1,n}>0.$$  
\end{enumerate} 
\end{theorem}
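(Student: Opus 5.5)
By Theorem~\ref{thm:W=V}, applied on the boxes $\Lambda_N$ and passed to the limit $N\to\infty$, it suffices to study the topological ratio $\hat R(p_2,p_1,n)=\rho(V_{\gamma_n'})^2/\rho(V_{\gamma_n})$ for the infinite-volume CPP. I will treat the three regimes separately, in each case comparing $\rho(V_{\gamma_n})$ and $\rho(V_{\gamma_n'})$ through perimeter-law or area-law estimates. The main tools are:
\begin{itemize}
\item the conditional description of $(P_2,P_1)$ given $f$ from Theorem~\ref{thm:coupling};
\item monotonicity and positive association of $V_\gamma$, which is an increasing event;
\item stochastic comparison of the CPP marginals with independent plaquette or bond percolation (Proposition~\ref{prop:10} and its analogues).
\end{itemize}
Because the ratio of the Radon--Nikodym weights $|H^1(P_2,P_1)|$ under a single-cell flip lies in $[1/q,q]$, each marginal dominates, and is dominated by, independent percolation with suitably modified parameters. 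This finite-energy bound is what makes the comparisons in the regimes below possible.

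\emph{Part (3), $\beta_1$ large.} Here $p_1$ is close to $1$, so $P_1$ stochastically dominates Bernoulli bond percolation with parameter $p_1'=p_1/(p_1+q(1-p_1))$, which is close to $1$. If every edge of $\gamma_n'$ lies in $P_1$, then $\gamma_n'$ is supported on $P_1$ and $V_{\gamma_n'}$ occurs. That event has probability only exponential in the perimeter, which is not enough on its own. The better route is to bound the ratio directly. I would show that $\rho(V_{\gamma_n})\le C\rho(V_{\gamma_n'})\rho(V_{\gamma_n''})$ up to a constant, using that when $P_1$ is supercritical, $V_\gamma$ is essentially the event that the endpoints are joined by $P_1$-paths ``at the corners'', so that both $\rho(V_{\gamma_n'})$ and $\rho(V_{\gamma_n})$ are bounded away from zero. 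Concretely, I would show $\rho(V_{\gamma'_n})\ge c>0$ uniformly in $n$: the endpoints $x_n,y_n$ are joined by a $P_1$-path $\eta$ with probability bounded below, since the infinite cluster is unique for supercritical bond percolation. It then remains to show that $\gamma_n'-\eta$ is null-homologous in $(P_2,P_1)$ with positive probability. I would handle this with a Peierls-type argument: when $p_1$ is large, $P_1$ contains the entire edge set outside a sparse set of ``defects'', and $\gamma_n'$ can be deformed within the complement. Since $\rho(V_{\gamma_n})\le1$, this gives $\liminf \hat R\ge c^2>0$.

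\emph{Part (2), $\beta_2$ small.} Now $P_2$ is dominated by subcritical plaquette percolation. Then $V_\gamma$ essentially requires $\gamma$ to be covered by $P_1$ together with small $P_2$-surfaces. I would show perimeter-law behaviour with matching exponents: there are $a,b$ with $\rho(V_\gamma)=\exp(-a|\gamma|+O(b))$. The lower bound comes from the FKG inequality applied to local events that cover $\gamma$ edge by edge, with corrections at the corners. The upper bound comes from a cluster expansion or subadditivity argument. To conclude I need $\rho(V_{\gamma_n'})^2/\rho(V_{\gamma_n})$ bounded below. Sub- and supermultiplicativity give this: $\rho(V_{\gamma_n})\le \rho(V_{\gamma_n'})\rho(V_{\gamma_n''})\cdot C$, because by subcriticality any witness for $\gamma_n$ decomposes, up to surfaces near the two junction points $x_n,y_n$, into witnesses for the two halves. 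This decorrelation step is, I expect, the main obstacle: it needs a BK-type or exponential-decay estimate for $P_2$-clusters near the corners, and here I would first try to stochastically dominate $P_2$ by independent percolation with parameter $qp_2/(1-p_2+qp_2)$.

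\emph{Part (1), $\beta_2$ large and $\beta_1$ small.} Here $P_2$ is supercritical and $P_1$ is very sparse. I would show that $\rho(V_{\gamma_n})$ obeys a perimeter law bounded below by $e^{-c|\gamma_n|}$ with $c$ small; the loop bounds a surface in $P_2$, which is the duality and sharpness input from the plaquette random cluster results of~\cite{duncan2025sharp}. By contrast, $V_{\gamma_n'}$ forces the open path to be capped by a $P_2$-surface whose boundary lies in $\gamma_n'$ plus $P_1$. Since $P_1$ is sparse, this requires a $P_1$-path from $x_n$ to $y_n$ of length at least $n$, or else a long cost near the endpoints, giving $\rho(V_{\gamma_n'})\le e^{-c'|\gamma_n'|}\cdot e^{-\alpha n}$ with $\alpha\to\infty$ as $\beta_1\to0$. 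Choosing $\beta_1'$ so that this extra factor beats the perimeter constant gives $\hat R\to0$.
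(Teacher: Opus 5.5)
Your Part (1) is sound and is essentially the paper's argument. Since $\partial(\gamma_n'-\partial\tau)=y_n-x_n$, the event $V_{\gamma_n'}$ forces $x_n\leftrightarrow y_n$ in $P_1$. This connection decays at a rate tending to infinity as $p_1\to 0$; your domination by Bernoulli bond percolation plus path counting is simpler than the paper's appeal to the random-cluster marginal and sharpness. Monotonicity in $p_1$ then reduces the denominator to the supercritical perimeter law for the plaquette random cluster model (Theorem~\ref{thm:areaperimeter}), with a constant depending only on $p_2$. Parts (3) and (2), however, contain a genuine gap.

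In Part (3), the claim $\rho(V_{\gamma_n'})\ge c>0$ uniformly in $n$ is false. For any $p_2,p_1<1$, every edge of $\gamma_n'$ must lie in $P_1$ or be incident to a plaquette of $P_2$. Comparison with independent percolation then gives $\rho(V_\gamma)\le e^{-c_2|\gamma|}$ (Proposition~\ref{prop: perimeter}). A positive density of the edges of $\gamma_n'$ are closed in $P_1$, and each must be repaired by $P_2$, so no Peierls-type deformation can give a uniform lower bound. Both numerator and denominator of $\hat R$ therefore decay exponentially. What must be proved is a factorization $\rho(V_{\gamma_n})\lesssim\rho(V_{\gamma_n'})\rho(V_{\gamma_n''})$, which is exactly the step you flag, and leave open, in Part (2).

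Your suggested routes in Part (2) do not close that step. A perimeter law $\exp(-a|\gamma|+O(1))$ with an $O(1)$ error is essentially the statement itself; subadditivity only yields $o(|\gamma|)$ errors, which are useless for a ratio. BK is unavailable because the weight $|H^1(P_2,P_1)|$ makes $\rho$ a non-product measure. Stochastic domination of $P_2$ says nothing about the measure conditioned on the increasing event $V_{\gamma_n}$. Nor does it give uniformity in $p_1$, where small $p_1$ competes with large $P_2$-surfaces.

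The missing idea is the switching lemma (Lemma~\ref{lemma:switch} and Corollary~\ref{cor:switching}). By Mayer--Vietoris, $\dim H^1$ is additive for two pairs whose plaquettes meet only in the $1$-skeleton and whose $P_1$-edges avoid the other pair. Let $\mathcal{E}$ be the event that the clusters hooked to $\gamma_n'$ and $\gamma_n''$ in $P\cup Q$ are disjoint. On $\mathcal{E}$, exchanging the $\gamma_n''$-part between two independent copies $(P,Q)$ preserves $\rho^2$, which yields $\hat R\ge\rho^2(\mathcal{E}\mid P\in V_{\gamma_n})$. Each regime then reduces to bounding this conditional probability below:
\begin{itemize}
\item for $p_1$ large, via a separating set of plaquettes with fully open boundary (FKG plus comparison with Bernoulli percolation);
\item for $p_2$ small, via exponential tails of $P_2$-clusters under $\rho(\cdot\mid V_{\gamma_n})$ uniformly in $p_1$, which needs a second switching argument together with the isoperimetric inequality of Lemma~\ref{lemma:isoperimetric}.
\end{itemize}
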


In Proposition~\ref{prop: MF for i geq 2}, we show that the natural analogue Marcu--Fredenhagen ratio for \( i \geq 2\) (see Figure~\ref{figure: Wilson lines MF}) does not exhibit a phase transition.

The next few results elucidate the basic properties of the CPP.

\begin{theorem} \label{thm:positive}
$\rho$ is positively associated. That is, if $A$ and $B$ are events that are increasing with respect to $\paren{P_2,P_1}$ then
$$\rho\paren{A\cap B}\geq \rho\paren{A}\rho\paren{B}.$$
\end{theorem}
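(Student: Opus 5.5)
We plan to prove positive association through the FKG lattice condition (Holley's criterion). Identify a pair $(P_2,P_1)$ with the element $\omega\in\{0,1\}^{X^{[i+1]}\sqcup X^{[i]}}$ that records which cells are present, and order these configurations coordinatewise. The measure $\rho$ has full support when $0<p_1,p_2<1$, since $\abs{H^i(P_2,P_1;\Z_q)}\geq 1$ because the zero cochain is always compatible. The degenerate cases $p_j\in\{0,1\}$ can be recovered as limits. It therefore suffices to verify
$$\rho(\omega\vee\omega')\,\rho(\omega\wedge\omega')\geq \rho(\omega)\,\rho(\omega').$$
The Bernoulli factors $k_2^{|P_2|}k_1^{|P_1|}$ are multiplicative and cancel exactly, because $|\omega\vee\omega'|+|\omega\wedge\omega'|=|\omega|+|\omega'|$. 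The question thus reduces to log-supermodularity of $N(\omega):=\abs{Z^i(P_2,P_1;\Z_q)}$, which by the discussion after Definition~\ref{defn:compatible} equals $\abs{H^i(P_2,P_1;\Z_q)}$.

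Next, write $N(\omega)=q^{\dim Z(\omega)}$, where $Z(\omega)\subseteq C^i(X;\Z_q)$ is the $\Z_q$-vector space ($q$ is prime) cut out by the linear constraints
$$\{\,f(\epsilon)=0 : \epsilon\in P_1\,\}\cup\{\,\delta f(\sigma)=0 : \sigma\in P_2\,\}.$$
Each present cell $c$ contributes one linear functional $\ell_c$ on $C^i(X;\Z_q)$: either evaluation at $\epsilon$, or $f\mapsto f(\partial\sigma)$. We then have $\dim Z(\omega)=|X^{[i]}|-r(\omega)$, where $r(\omega)=\dim\operatorname{span}\{\ell_c: c\in\omega\}$. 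This $r$ is the rank function of a linear matroid on the ground set $X^{[i+1]}\sqcup X^{[i]}$, and it is therefore submodular:
$$r(\omega\vee\omega')+r(\omega\wedge\omega')\leq r(\omega)+r(\omega').$$
Submodularity follows because $\operatorname{span}(\omega\wedge\omega')\subseteq\operatorname{span}\omega\cap\operatorname{span}\omega'$, while the dimension formula gives $\dim(\operatorname{span}\omega+\operatorname{span}\omega')=\dim\operatorname{span}(\omega\vee\omega')$. Exponentiating, $N=q^{|X^{[i]}|-r}$ is log-supermodular, which is exactly the lattice condition.

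The FKG inequality for strictly positive measures satisfying the lattice condition (Fortuin--Kasteleyn--Ginibre, or Holley) then gives $\rho(A\cap B)\geq\rho(A)\rho(B)$ for increasing events $A$ and $B$. The boundary parameter values follow by continuity of $\rho$ in $(p_2,p_1)$, which holds on the finite state space because the weights are polynomial in the parameters and the normalizing constant stays positive.

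The main obstacle is not technical; it lies in correctly identifying $\abs{H^i(P_2,P_1;\Z_q)}$ with the number of compatible cochains and in seeing that the two kinds of constraints, vanishing on $i$-cells and closedness on $(i+1)$-cells, fit into a single matroid. Once both are treated as linear functionals on the same space $C^i(X;\Z_q)$, submodularity is immediate. This is the point where primality of $q$ is used, since it makes the cocycle group a vector space and the dimension count valid. Alternatively, one could derive the same conclusion from Theorem~\ref{thm:coupling} by checking the lattice condition on the joint measure $\kappa$, but the matroid argument is more direct.
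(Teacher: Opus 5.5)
Your proof is correct, and its skeleton is the paper's. Both verify the FKG lattice condition, cancel the factors $k_2^{|P_2|}k_1^{|P_1|}$, and reduce to supermodularity of $b_i(P_2,P_1;\Z_q)$, which is Lemma~\ref{lem:homlattice}.

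The difference lies in how that inequality is proved. The paper derives it from exactness of the relative Mayer--Vietoris sequence $H^i(X\cup Y,A\cup B)\to H^i(X,A)\oplus H^i(Y,B)\to H^i(X\cap Y,A\cap B)\to H^{i+1}(X\cup Y,A\cup B)$, with the slack coming from the connecting homomorphism. You instead note that $Z^i(P_2,P_1;\Z_q)$ is the common kernel of one linear functional per open cell. Hence $b_i=|X^{[i]}|-r$, where $r$ is the rank function of a $\Z_q$-representable matroid on $X^{[i+1]}\sqcup X^{[i]}$, and submodularity of $r$ is just the dimension formula for sums and intersections of subspaces.

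Your route is more elementary: it needs no relative Mayer--Vietoris sequence, only the identification $H^i(P_2,P_1)=Z^i(P_2,P_1)$. It also shows that $\rho\propto k_2^{|P_2|}k_1^{|P_1|}q^{-r}$ is a matroid random-cluster measure with cluster weight $q\geq 1$. Positive association, and its extension to the auxiliary model of Section~\ref{sec:stochdom} with parameter at least one, then become instances of a general fact. If you phrase the computation directly on the solution groups, using $Z(\omega\vee\omega')=Z(\omega)\cap Z(\omega')$, $Z(\omega)+Z(\omega')\subseteq Z(\omega\wedge\omega')$ and $|M_1+M_2|\,|M_1\cap M_2|=|M_1|\,|M_2|$ for subgroups, you see that primality of $q$ is not really needed at this step.

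The paper's formulation, for its part, stays in the topological language it reuses in Lemma~\ref{lemma:switch}. There the same sequence, plus a surjectivity check, gives an exact equality rather than an inequality.

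Your closing aside about checking the lattice condition on $\kappa$ is not a genuine alternative. The field $f$ is $\Z_q$-valued with no natural order making $\kappa$ log-supermodular, and summing $f$ out simply returns $\rho$. Nothing in your argument depends on that aside.
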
 

We can compare the behavior of the CPP to a pair of independent plaquette percolations in the sense of stochastic domination. 
\begin{definition}[Stochastic Domination]
For a pair of binary assignments $\omega_1,\omega_2$, we say $\omega_1\leq \omega_2$ if for all $z$, $\omega_1(z)=1$ implies $\omega_2(z)=1.$ If there exists a coupling $K(\omega_1,\omega_2)$  of two measures $\pi_1(\omega_1),\pi_2(\omega_2)$ so that $K(\omega_1\leq \omega_2)=1$, we say $\pi_2$ stochastically dominates $\pi_1$ and write $\pi_1\leq_{st}\pi_2$.
\end{definition}

\begin{theorem}
\label{thm:stochdom}
    Let $X$ be a finite cell complex, $q\geq 1,$ and $p_2,p_1\in \brac{0,1}.$ Then $\rho_{p_2,p_1}$ is stochastically dominated by independent Bernoulli percolation with probabilities $p_2$ on $(i+1)$-cells and $p_1$ on $i$-cells, and stochastically dominates percolation with probabilities $\frac{p_2}{q(1-p_2)+p_2}$ and $\frac{p_1}{q(1-p_1)+p_1}$ respectively. Furthermore, when $q$ is fixed, $\rho$ is stochastically increasing in $p_2$ and in $p_1$.   
\end{theorem}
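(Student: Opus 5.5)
We prove Theorem~\ref{thm:stochdom} by the standard Holley-criterion argument used for random cluster models: we compare conditional probabilities of opening a single cell given the rest of the configuration. Write $\omega=(P_2,P_1)$ as a binary assignment on $X^{[i+1]}\cup X^{[i]}$. For a cell $z$, let $\omega^z$ (resp.\ $\omega_z$) be $\omega$ with $z$ open (resp.\ closed). Since $H^i(P_2,P_1;\Z_q)$ coincides with the relative cocycle group $Z^i(P_2,P_1;\Z_q)$, it is a $\Z_q$-vector space ($q$ prime). Opening $z$ adds one linear constraint, either $f(z)=0$ or $\delta f(z)=0$. Hence $Z^i(\omega^z)\subseteq Z^i(\omega_z)$ with index $1$ or $q$, and
$$\frac{|H^i(\omega^z)|}{|H^i(\omega_z)|}\in\{1,q^{-1}\}.$$
This gives the conditional probability of opening $z$ given the other cells:
$$\rho(z\text{ open}\mid \omega \text{ off } z)=\frac{p\,r}{p\,r+(1-p)}\quad\text{with } r\in\{1,q^{-1}\},$$
where $p=p_2$ or $p_1$ according to the dimension of $z$. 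This quantity lies between $\frac{p}{q(1-p)+p}$ and $p$. Also note that $\rho$ is strictly positive whenever $p_2,p_1\in(0,1)$, because $|H^i|\ge 1$ (the zero cochain is always compatible). The boundary cases $p\in\{0,1\}$ follow by taking limits, or directly, since the corresponding cells are deterministic.

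For the two comparisons with Bernoulli percolation, we apply Holley's inequality, or more simply its single-site version: if every single-site conditional probability of a measure is at most (at least) $p$, then it is dominated by (dominates) the product measure. This follows from Holley's criterion, comparing with a product measure, whose conditionals are constant. Alternatively, one can construct the coupling directly via the heat-bath Markov chain with shared uniform variables. Both product bounds follow immediately from the two-sided bound on the conditionals above.

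For monotonicity in $p_2$ (and symmetrically $p_1$), we take $p_2\le p_2'$. By Holley's criterion it suffices that for $\omega\le\omega'$ the conditional probability of opening $z$ under $\rho_{p_2}$ given $\omega$ is at most that under $\rho_{p_2'}$ given $\omega'$. Since $p\mapsto \frac{pr}{pr+1-p}$ is increasing in both $p$ and $r$, this reduces to the key monotonicity claim: the ratio $r(z,\omega)$ is increasing in $\omega$. Equivalently, if opening $z$ imposes no new constraint on $Z^i(\omega_z)$, it imposes none on $Z^i(\omega'_z)$ for $\omega\le\omega'$. This holds because the constraint $\ell_z$ (i.e.\ $f\mapsto f(z)$ or $f\mapsto\delta f(z)$) being redundant means it lies in the span of the constraints defining $\omega$. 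That span only grows as $\omega$ increases, so redundancy persists. The same fact yields the FKG lattice condition behind Theorem~\ref{thm:positive}, and Holley's criterion then gives the stated dominations.

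The main obstacle is precisely this monotonicity of $r$, which rests on linear algebra over the field $\Z_q$. We expect it to go through cleanly for prime $q$. (The hypothesis ``$q\ge1$'' in the statement should be read as the paper's standing assumption that $q$ is prime.)
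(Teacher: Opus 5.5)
Your proof is correct and follows the paper's route: Holley's criterion applied to one-point conditional probabilities that equal either $p$ or $\frac{p}{q(1-p)+p}$, according to whether opening the cell imposes a new linear constraint on $Z^i(P_2,P_1;\Z_q)$. The differences are minor. You skip the paper's auxiliary $r$-weighted model, you note that the two comparisons with product measures need no monotonicity of the conditionals at all, and you prove that monotonicity (needed only for $p_2\le p_2'$, $p_1\le p_1'$) directly by the redundant-constraint argument, whereas the paper appeals to positive association derived from the Mayer--Vietoris Lemma~\ref{lem:homlattice}.
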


In the text, we prove a stronger version of Theorem~\ref{thm:stochdom}, which concerns a modified CPP where the cohomology coefficients are no longer determined by the parameter $q.$

For the next result, we specialize the the $d$-dimensional torus; specifically, let $\T=\mathbb{T}_d^N$ be the discrete torus of length $N$ obtained by identifying opposite faces of the cube $\brac{0,N}^d\subset\Z^d.$ 

\begin{definition}[Bullet Dual]
    If $\T$ is the $d$-dimensional unit cubical lattice on the torus, we let $(\T)^\bullet$ denote same lattice, shifted by $1/2$ in every coordinate. For every $i$-cell $x\in \T$, there is a unique $(d-i)$-cell $x'$ in $(\T)^\bullet$. For a set $X$, we define $X^\bullet \coloneqq \{x'\in (\T)^\bullet \mid x\notin X\}$.
\end{definition}

\begin{theorem} \label{thm:duality}
    Let $q$ be a prime integer and let $p_2,p_1\in\brac{0,1}.$ Then
    $$
        \rho{\paren{P_2,P_1}} = \rho^{\bullet}{\paren{P_1^{\bullet},P_2^{\bullet}}},
    $$
    where $p_2^{\bullet}=\frac{q(1-p_1)}{p_1+q(1-p_1)}$, $p_1^{\bullet}=\frac{q(1-p_2)}{p_2+q(1-p_2)}$, $\rho=\rho_{p_2,P_1,q,i,d,\T_N^d},$ and $\rho^{\bullet}=\rho_{p_2^{\bullet},p_1^{\bullet},q,d-i-1,(\T)^\bullet}$.
\end{theorem}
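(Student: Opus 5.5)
The plan is to prove the identity directly from Definition~\ref{def:CPP}. Both sides are normalized probability measures, and $(P_2,P_1)\mapsto(P_1^\bullet,P_2^\bullet)$ is a bijection between the two state spaces. So it suffices to show that the unnormalized weights agree up to a multiplicative constant that does not depend on $(P_2,P_1)$. Write $N_j=\abs{\T^{[j]}}$ for the number of $j$-cells of $\T=\T_N^d$.

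\textbf{Step 1: the dual pair is an admissible state.} By the definition of the bullet dual, the $(d-i)$-cells of $P_1^\bullet$ are the duals $\epsilon'$ of the $i$-cells $\epsilon\notin P_1$. The $(d-i-1)$-cells of $P_2^\bullet$ are the duals $\sigma'$ of the $(i+1)$-cells $\sigma\notin P_2$. Hence $P_1^\bullet$ is a $(d-i)$-dimensional and $P_2^\bullet$ a $(d-i-1)$-dimensional percolation subcomplex of $(\T)^\bullet$, which is exactly the type of pair weighted by $\rho^\bullet$ with index $i^\bullet=d-i-1$. Moreover
\[
\abs{P_1^\bullet}=N_i-\abs{P_1},\qquad \abs{P_2^\bullet}=N_{i+1}-\abs{P_2}.
\]

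\textbf{Step 2 (the main step): compare the cohomology counts.} Let $F$ be the set of $i$-cells not in $P_1$. By Definition~\ref{defn:compatible}, $Z^i(P_2,P_1;\Z_q)$ is the kernel of the linear map
\[
M\colon \Z_q^{F}\to\Z_q^{P_2^{[i+1]}},\qquad f\mapsto \delta f|_{P_2}.
\]
The matrix of $M$ has entries given by the incidence numbers $[\sigma:\epsilon]\in\{0,\pm1\}$. Since $q$ is prime, $\Z_q$ is a field, so
\[
\abs{H^i(P_2,P_1;\Z_q)}=q^{\abs{F}-\operatorname{rank}M}=q^{N_i-\abs{P_1}-\operatorname{rank}M}.
\]

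On the dual side, a compatible $(d-i-1)$-cochain $g$ must vanish on the cells $\sigma'$ with $\sigma\notin P_2$, so it is a function on $\{\sigma':\sigma\in P_2\}$. It must also satisfy $\delta g(\epsilon')=0$ for every $\epsilon\in F$. The key geometric input is the standard fact about the dual cubical complex on the torus: $\sigma'$ is a face of $\epsilon'$ exactly when $\epsilon$ is a face of $\sigma$, with incidence number $[\epsilon':\sigma']=\pm[\sigma:\epsilon]$. The sign depends only on the chosen orientations, so after reorienting cells it amounts to multiplying rows and columns by $\pm1$. Consequently the dual constraint matrix is $M^{T}$ up to such signs, and has the same rank. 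This gives
\[
\abs{H^{d-i-1}(P_1^\bullet,P_2^\bullet;\Z_q)}=q^{\abs{P_2}-\operatorname{rank}M}.
\]
Dividing the two counts yields
\[
\abs{H^i(P_2,P_1;\Z_q)}=q^{N_i-\abs{P_1}-\abs{P_2}}\,\abs{H^{d-i-1}(P_1^\bullet,P_2^\bullet;\Z_q)}.
\]
This is the step I expect to need the most care: the incidence and sign bookkeeping for the dual cell structure on $\T$. I would check it on a single cube and then extend by translation invariance. Periodicity guarantees that every cell has a dual, so there are no boundary effects.

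\textbf{Step 3: match the parameters.} In the $k$-parametrization, the weight of $(P_2,P_1)$ under $\rho$ is, up to a constant factor,
\[
\paren{k_2/q}^{\abs{P_2}}\paren{k_1/q}^{\abs{P_1}}\,\abs{H^{d-i-1}(P_1^\bullet,P_2^\bullet;\Z_q)}.
\]
By Step 1, the weight of $(P_1^\bullet,P_2^\bullet)$ under $\rho^\bullet$ is $(k_2^\bullet)^{N_i-\abs{P_1}}(k_1^\bullet)^{N_{i+1}-\abs{P_2}}$ times the same cohomology factor. Up to a constant, this equals
\[
(1/k_1^\bullet)^{\abs{P_2}}\,(1/k_2^\bullet)^{\abs{P_1}}\,\abs{H^{d-i-1}(P_1^\bullet,P_2^\bullet;\Z_q)}.
\]
From the stated formula for $p_2^\bullet$,
\[
k_2^\bullet=\frac{p_2^\bullet}{1-p_2^\bullet}=\frac{q(1-p_1)}{p_1}=\frac{q}{k_1},
\]
and symmetrically $k_1^\bullet=q/k_2$. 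Therefore $1/k_2^\bullet=k_1/q$ and $1/k_1^\bullet=k_2/q$, so the two weights are proportional. Normalizing both measures gives the claimed equality.

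The degenerate endpoint values $p_j\in\{0,1\}$ correspond to $k$ equal to $0$ or $\infty$. I would handle these either as limits of the identity, which is continuous in the parameters on this finite state space, or by a direct check in which the measures concentrate on forced configurations.
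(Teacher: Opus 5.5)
Your proof is correct. The overall structure matches the paper: first compare the two cohomology counts, then match parameters via $k_2^\bullet=q/k_1$ and $k_1^\bullet=q/k_2$. The difference lies in how you obtain the key identity
\[
\abs{H^i(P_2,P_1;\Z_q)}=q^{N_i-\abs{P_1}-\abs{P_2}}\,\abs{H^{d-i-1}(P_1^\bullet,P_2^\bullet;\Z_q)},
\]
where $N_i$ is the number of $i$-cells of $\T$.

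The paper argues topologically, in two steps. First, it imports the fact that $\T\smallsetminus P_2$ and $\T\smallsetminus P_1$ deformation retract onto $P_2^\bullet$ and $P_1^\bullet$, and applies relative Alexander duality to get $H^{d-i-1}(P_1^\bullet,P_2^\bullet)\cong H_{i+1}(P_2,P_1)$. Second, it uses an Euler characteristic computation for the quotient $P_2/P_1$ to trade $b_{i+1}(P_2,P_1)$ for $b_i(P_2,P_1)$ plus a cell count.

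You do both steps at once at the chain level. The dual compatibility conditions form the transpose of the primal ones, so the two dimensions are $\abs{F}-\mathrm{rank}\,M$ and $\abs{P_2}-\mathrm{rank}\,M$, and rank--nullity replaces the Euler characteristic. Since $\abs{P_2}-\mathrm{rank}\,M=\dim\ker M^{T}$ is exactly $b_{i+1}(P_2,P_1)$, you are in effect proving by hand the special case of Alexander duality that the paper cites.

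What your route buys:
\begin{itemize}
\item It is self-contained. The only geometric input is the standard chain-level duality on the periodic cubical lattice. Orient $x+[0,1]^S$ by $e_S$ and its dual by $\pm e_{S^c}$, with the sign fixed so that $e_S\wedge(\pm e_{S^c})=e_1\wedge\cdots\wedge e_d$. With these orientations the dual incidence matrix is the primal one transposed, up to a sign depending only on dimension. Citing this is cleaner than a single-cube check.
\item It treats $d-i-1=0$ and $i=0$ uniformly. The paper needs a reduced-cohomology correction in those cases.
\item It makes the exponent transparent. The paper's rank lemma writes $\abs{\T^{(i+1)}}$ where its own proof, like your formula, gives $N_i$.
\item Your continuity argument also covers the endpoint values $p_j\in\{0,1\}$, which the paper does not address.
\end{itemize}

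What the paper's route buys: it identifies the dual weight with a homology group of the original pair. It also transfers more readily to other settings, such as the free/wired boundary conditions mentioned after the theorem, without redoing orientation bookkeeping.
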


Analogues of Theorem~\ref{thm:duality} result can be proven with more general boundary conditions. For example, it is easy to modify the proof to show that the dual of the model with free boundary conditions has wired boundary conditions, when defined appropriately (see, e.g.,~\cite{prcm-24}).

When $d-i-1=i$, the Potts lattice Higgs model possesses a self-dual line in the 2-dimensional parameter space. While we do not do so here, it would be interesting to consider how the coupling in Theorem~\ref{thm:coupling} behaves under the duality transformation. This would yield a further generalization of the Loop--Cluster coupling~\cite{zhang2020loop,hansen2025general}.

We outline the paper. Section~\ref{sec:topos} reviews relevant topological background. Theorems~\ref{thm:coupling} and~\ref{thm:W=V} are proven in Section~\ref{sec:coupling}. This section also covers a coupling for the Potts lattice Higgs~model in general gauge and the equivalence of the CPP with the ghost vertex construction. Sections~\ref{sec:positive}, \ref{sec:stochdom}, and~\ref{sec:duality} include proofs of Theorems~\ref{thm:positive}, \ref{thm:stochdom}, and~\ref{thm:duality}, respectively. 
In Section~\ref{sec:applications}, we derive new and simple proofs of several known properties of the Potts lattice Higgs Model: Griffith's Second Inequality as well as a perimeter law and monotonicity for Wilson line observables.
Finally,  Theorem~\ref{thm:nontrivial} is proven in Section~\ref{sec:ratio}.

\section{Topological Terminology and Techniques} \label{sec:topos}

Throughout the paper, we rely heavily on concepts from algebraic topology. Standard references for these topics are~\cite{H02,bredon2013topology}, while~\cite{kaczynski2004computational,saveliev2016topology} provide introductions specific to cubical complexes.

\subsection{Absolute Homology and Cohomology}
The boundary and coboundary maps defined in Section \ref{sec:defs} have certain useful properties. Composing two boundary maps, or two coboundary maps, always gives the $0$ map: $\partial_j\circ\partial_{j+1}=0, \; \delta_j\circ\delta_{j-1}=0 $. Because $\im \partial_{j+1}\subset \ker \partial_{j} $ (also notated $B_j(X;\,\Z_q)\subset Z_j(X;\,\Z_q)$) and \( \im \partial_{j+1}\) and \( \ker \partial_{j} \) are both linear subspaces, we can define the $j$-dimensional (absolute) \emph{homology group}, 
\[
    H_j(X;\,\Z_q)=\ker \partial_{j} /\im \partial_{j+1} .
\]
The cardinality of \( H_j(X ;\, \Z_q)\) can very roughly be interpreted as the the number of ``linearly independent holes" in the set: the kernel of the boundary map are chains with no boundary, ``\emph{cycles}", while the image are \emph{boundaries}, and the quotient corresponds to cycles that aren't boundaries. Quotienting by boundaries not only removes cycles that are boundaries, but also removes the distinction between cycles whose only difference is a boundary, like two nearby paths encircling the same lake.

Consider the cell complex $X$ shown in Figure~\ref{fig:homologyexample}. It consists of six vertices, seven oriented edges, and one face. We may orient the face $f_1$ so that $\partial f_1=e_1+e_2+e_3+e_4.$ The matrices of the boundary maps \( \partial_1 \) and \( \partial_2 \) with respect to the ordered bases $\set{v_1,\ldots, v_6}$ of $C_0(X;\,\Z_q),$  $\set{e_1,\ldots, e_7}$ of $C_1(X;\,\Z_q),$ and $\set{f_1}$ for $C_2(X;\,\Z_q)$ are 

$$\partial_1= \begin{bmatrix}
-1 & 1 & 0 & 0 & 0 & 0 & 0 \\
1 & 0 & 0 & -1 & 0 & 0 & 0 \\
0 & 0 & -1 & 1 & 0 & 0 & -1 \\
0 & -1 & 1 & 0 & 1 & 0 & 0 \\
0 & 0 & 0 & 0 & -1 & 1 & 0 \\
0 & 0 & 0 & 0 & 0 & -1 & 1 
\end{bmatrix} \qquad  \text{and} \qquad \partial_2=\begin{bmatrix}
1 \\
1 \\
1 \\
1 \\
0 \\
0 \\
0 
\end{bmatrix}.$$
An elementary computation yields that 
$$
    Z_1(X;\,\Z_q)=\ker\partial_1=\mathrm{Span}\paren{e_1+e_2+e_3+e_4,e_3-e_5-e_6-e_7}
$$ 
and
$$
    B_1(X;\,\Z_q)=\im\partial_2=\mathrm{Span}\paren{e_1+e_2+e_3+e_4}.
$$ 
Consequently, a basis for $H_1(X;\,\Z_q)=Z_1(X;\,\Z_q)/B_1(X;\,\Z_q)$ is $\set{e_3-e_4-e_6-e_7}$, and thus $H_1(X;\,\Z_q)\cong \Z_q.$

\begin{figure}[t]
    \centering

\begin{tikzpicture}[scale=1.8, decoration={
    markings,
    mark=at position 0.5 with {\arrow{>}}}
    ]

    \fill[fill=blue, fill opacity = .2] (0,0) -- (2,0) -- (2,2) -- (0,2) -- cycle;

    \draw (1,1) node {\( f_1\)};
    
    \draw[thick,postaction={decorate}] (0,0) -- (2,0) node[midway,below,yshift=-1ex] {\( e_4\)}; 
    \draw[thick,postaction={decorate}] (2,0) -- (4,0) node[midway,below,yshift=-1ex] {\( e_7\)}; 
    \draw[thick,postaction={decorate}] (4,0) -- (4,2) node[midway,right,xshift=.5ex] {\( e_6\)}; 
    \draw[thick,postaction={decorate}] (4,2) -- (2,2) node[midway,above,yshift=1ex] {\( e_5\)}; 
    \draw[thick,postaction={decorate}] (2,2) -- (0,2) node[midway,above,yshift=1ex] {\( e_2\)}; 
    \draw[thick,postaction={decorate}] (0,2) -- (0,0) node[midway,left,xshift=-1ex] {\( e_1\)};  
    \draw[thick,postaction={decorate}] (2,0) -- (2,2) node[midway,left,xshift=-1ex] {\( e_3\)};  
    
    \fill (0,0) circle (1.6pt) node[below,yshift=-1ex] {\( v_2\)}; 
    \fill (2,0) circle (1.6pt) node[below,yshift=-1ex] {\( v_3\)}; 
    \fill (4,0) circle (1.6pt) node[below,yshift=-1ex] {\( v_6\)}; 
    \fill (0,2) circle (1.6pt) node[above,yshift=1ex] {\( v_1\)}; 
    \fill (2,2) circle (1.6pt) node[above,yshift=1ex] {\( v_4\)}; 
    \fill (4,2) circle (1.6pt) node[above,yshift=1ex] {\( v_5\)}; 
\end{tikzpicture}
    \caption{A cell complex with six vertices, seven edges, and one face.} 
    
    \label{fig:homologyexample}
\end{figure}
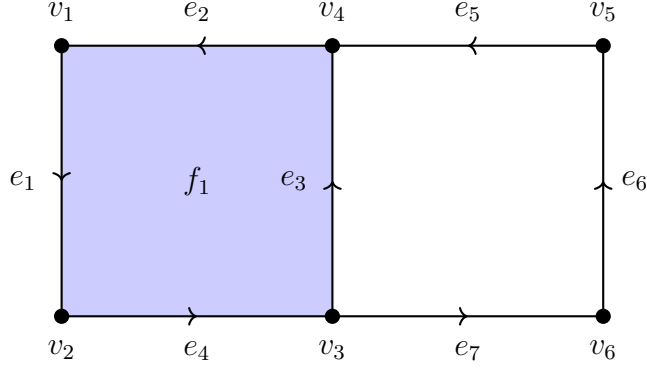

We now define the \emph{cocycles} $Z^j(X;\,\Z_q)$, the \emph{coboundaries} $B^j(X;\,\Z_q)$, and the \emph{cohomology} \(  H^j(X;\,\Z_q), \) by
\begin{equation*}
    Z_j(X;\,\Z_q)=\ker\delta_j, \quad  B_j(X;\,\Z_q)=\im\delta_{j-1} , \text{ and }  H^j(X;\,\Z_q)=\ker \delta_{j} /\im \delta_{j-1} . 
\end{equation*}
Then \( Z_j(X;\,\Z_q)\) consists of functions $f\in C^j(X;\,\Z_q)$ such that $f(\partial\sigma)=0$ for all $(j+1)$-cells $\sigma$ in \( X\), \( B^j(X;\,\Z_q)\) consists of functions which can be written as $\partial \phi$ for some $\phi\in C^{j+1}(X;\,\Z_q)$, and the cohomology $H^j(X;\,\Z_q)$ consists of cocycles modulo coboundaries. The cohomology is more difficult to explain intuitively, but can be thought of as the ``obstruction to discrete integrability" for the space $X.$ Specializing to the case where $P_2$ is an $(i+1)$-dimensional percolation subcomplex, $Z^1\paren{P_2;\,\Z_q}$ is the collection of spin assignments compatible with the pair $\paren{\varnothing,P_2}$ and $B^1\paren{P_2;\,\Z_q}$ is the group of gauge symmetries.

We now return to the example in Figure~\ref{fig:homologyexample}. Choosing bases $\set{v_1^*,\ldots,v_6^*}$ for  $C^0\paren{X}$, $\set{e_1^*,\ldots,e_7^*}$ for $C^1(X;\,\Z_q),$
and $\set{f_1^*}$ for $C^2(X;\,\Z_q)$, dual to the bases given above (where, for an $i$-cell $\sigma$ the cochain $\sigma^*$ is defined by $\sigma^*\paren{\sigma'}
=I_{\sigma=\sigma'}$), we obtain matrices for the coboundary maps $\delta_0=\partial_1^T$ and $\delta_1=\partial_2^T.$ In particular, it follows that 
\begin{align*}
Z^1(X;\,\Z_q)&=\mathrm{Span}\paren{e_1^*-e_2^*,e_1^*-e_3^*,e_1^*-e_4^*,e_5^*,e_6^*,e_7^*}
\end{align*}
and
\begin{align*}
B^1(X;\,\Z_q)&=\mathrm{Span}\paren{-e_1^*+e_2^*,e_1^*-e_4^*,-e_3^*+e_4^*-e_7^*,-e_2^*+e_3^*+e_5^*,-e_5^*+e_6^*},
\end{align*}
and hence $H^1(X;\,\Z_q)\cong \Z_q$ is a vector space of dimension one. There are many choices of basis for this cohomology group, for example one could choose $e_3^*-e_4^*-e_6^*-e_7^*.$ One could ask whether it is always true that $H^1(X;\,\Z_q)\cong H_1(X;\,\Z_q)$; this fails when $q$ is non-prime and the simplest examples are embeddable in $\R^4$ but not in $\R^3.$  Here, we fix $q$ to be prime so $\Z_q$ is a field, and the Universal Coefficient Theorem for Cohomology (Theorem 3.2 in~\cite{H02}) implies that in this case,
$$
    H^j(X,A;\,\Z_q)\cong 
    H_j(X,A;\,\Z_q)
$$
for all $j$. One might be tempted to believe that the dimensions of the vector spaces $H^j(X;\,\Z_q)$ and  $H_j(X;\,\Z_q)$ do not depend on $q.$ However, while this is true for $j=0,$ it is false for $j>0.$ The simplest examples for which this fails are non-orientable surfaces such as the real projective plane and Klein bottle.  When we are not relying on particular properties of the coefficient field, we usually suppress it for brevity and write, e.g., \( C^j(X) \) instead of \( C^j(X;\,\Z_q),\) but we will note whenever it is relevant.

\subsection{Relative Homology and Cohomology}

Homology and cohomology apply to a single complex $X$. However, we can perform a similar process for a pair of complexes $(X,A)$, where $A$ is a subcomplex of $X$. For so-called ``good pairs'' $\paren{X,A},$ the \emph{relative homology} $H_j\paren{X,A}$ and \emph{relative cohomology} $H^j\paren{X,A}$ are respectively isomorphic to the absolute homology $H_j\paren{X/A}$ and absolute cohomology $H^j\paren{X/A}$ of the quotient space $X/A$ formed by collapsing $A$ to a single point (where we are suppressing the dependence on the choice of coefficients). See Theorem 2.13 in~\cite{H02} for the statement of this result for homology and the discussion below for the definition of a good pair. The proof for cohomology proceeds identically. While all pairs considered in this paper are deterministically good, it will be useful to provide an equivalent definition via relative chain and cochain groups. 

The relative chain group is the quotient vector space $C_j(X,A)=C_j(X)/C_j(A)$. As the boundary of a chain supported on $A$ is also supported on $A,$ the usual boundary map induces a relative boundary map $\partial_j: C_j(X,A) \to C_{j-1}(X,A)$, and we can define relative homology $H_j(X,A)=\ker(\partial_{j})/\im(\partial_{j+1}).$  On the other hand, the relative cochain group  $C^j(X,A)$ is the subspace of $C^j(X)$ consisting of cochains that vanish on all $j$-cells of $A$ and the relative coboundary map $\delta_j$ is the restriction of the absolute coboundary map to $C^j(X,A).$ Again, $H^j(X,A)=\ker \delta_{j} /\im \delta_{j-1} .$ 
In our setting, the relative cohomology is much simpler. If $P_2$ and $P_1$ are a pair of percolation subcomplexes of dimensions $(i+1)$ and $i,$ then they share the same $(i-1)$-cells so  $C^{i-1}(P_2,P_1)=0. $ Thus $H^{i}(P_2,P_1)=\ker\paren{\delta^i\paren{C^i(P_2,P_1)}}\coloneqq Z^i\paren{P_2,P_1}$, which is simply the set of relative co-cycles: cochains compatible with $\paren{P_2,P_1}$ in the sense of Definition~\ref{defn:compatible}.

Returning to the example $X$ in Figure~\ref{fig:homologyexample}, if we set $A$ to contain all cells of $X$ except $e_5,$ $e_6,$ and $e_7$ then $C^0\paren{X,A}=\set{0},$ $C^1(X,A)=\mathrm{Span}\paren{e_5^*,e_6^*,e_7^*},$ and  $C^2(X,A)=\set{0}.$ It follows that $H^1(X,A)= C^1(X,A)\cong \Z_q^3.$ Alternatively, the quotient space $X/A$ consists of three circles meeting at a point, and it is easily seen that $H^1(X/A)\cong H_1(X/A)\cong \Z_q^3.$ 


Since $q$ is a prime number, $H^j\paren{X,A;\,\Z_q}$ and $H_j\paren{X,A;\,\Z_q}$ are vector spaces (as quotients of vector spaces) and are determined by their dimensions.  
\begin{definition}[Betti numbers]\label{defn:betti} 
    The $j$-dimensional \emph{relative Betti number} is 
    $$
        b_j(X,A;\,\Z_q) \coloneqq \mathrm{rank}\bigl(H^j(X,A;\,\Z_q)\bigr)
        =
        \mathrm{rank}\bigl(H_j(X,A;\,\Z_q)\bigr).
    $$
\end{definition}

By applying the rank-nullity theorem to the boundary maps, we obtain the famous \emph{Euler--Poincar\'{e} formula}.
\begin{prop}\label{prop:euler}
If $X$ is a finite, $d$-dimensional cell complex and $q$ is a prime number then 
\[\chi\paren{X}\coloneqq \sum_{j=0}^{d}(-1))^{j} |X^{\brac{j}}|=\paren{-1}^{j}b_j(X,A;\,\Z_q).\]
\end{prop}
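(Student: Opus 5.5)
We read the displayed identity in its intended form,
\[\chi(X)=\sum_{j=0}^{d}(-1)^j\,|X^{[j]}|=\sum_{j=0}^{d}(-1)^j\,b_j(X;\,\Z_q).\]
The relative version follows by the same argument: $\sum_j(-1)^j b_j(X,A;\,\Z_q)=\chi(X)-\chi(A)$, and one recovers the absolute statement by taking $A=\varnothing$. The plan is to apply rank--nullity to each boundary map and then telescope the alternating sum. Throughout we use that $q$ is prime, so $\Z_q$ is a field, every chain group is a finite-dimensional $\Z_q$-vector space, and ranks behave additively.

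\textbf{Step 1: dimensions of the chain groups.} Fix one orientation of each $j$-cell. The relation $-x$ identifies the two orientations, so the chosen oriented $j$-cells form a basis of $C_j(X;\,\Z_q)$. Hence $\dim C_j(X;\,\Z_q)=|X^{[j]}|$. In the relative case, $C_j(X,A)=C_j(X)/C_j(A)$, so $\dim C_j(X,A)=|X^{[j]}|-|A^{[j]}|$.

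\textbf{Step 2: rank--nullity for each boundary map.} Apply rank--nullity to $\partial_j\colon C_j\to C_{j-1}$. Since $\ker\partial_j=Z_j$ and $\im\partial_j=B_{j-1}$, this gives
\[|X^{[j]}|=\dim Z_j+\dim B_{j-1}.\]
We use the conventions $B_{-1}=0$ (because $\partial_0=0$) and $B_d=0$ (because $C_{d+1}=0$). By definition of homology as a quotient of vector spaces, $b_j=\dim Z_j-\dim B_j$.

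\textbf{Step 3: telescoping.} Substituting Step 2 into the alternating sum gives
\[\sum_j(-1)^j|X^{[j]}|=\sum_j(-1)^j\dim Z_j+\sum_j(-1)^j\dim B_{j-1}=\sum_j(-1)^j\bigl(\dim Z_j-\dim B_j\bigr)=\sum_j(-1)^jb_j.\]
The middle equality is the index shift $j\mapsto j+1$ in the second sum, which produces a sign change; the boundary terms vanish by the conventions above. Finally, Definition~\ref{defn:betti} identifies homological and cohomological ranks over a field, so the identity holds equally for the cohomological Betti numbers.

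There is no real obstacle; the only care needed is in the endpoint conventions $B_{-1}=B_d=0$. If one insisted on working with cohomology directly, the same computation applies to the coboundary maps $\delta_j$, since the matrix of $\delta_j$ is the transpose of that of $\partial_{j+1}$ and transposition preserves rank.
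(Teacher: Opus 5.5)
Your proposal is correct and matches the paper's proof, which only says that the formula follows by applying rank--nullity to the boundary maps; your Steps 2--3 carry this out in full, including the endpoint conventions $B_{-1}=B_d=0$. Your reading of the garbled display as $\chi(X)=\sum_j(-1)^j b_j(X;\,\Z_q)$, with the relative variant $\sum_j(-1)^j b_j(X,A;\,\Z_q)=\chi(X)-\chi(A)$, is the intended one and is consistent with how the paper later uses the formula for $P_2/P_1$.
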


\subsection{Mayer--Vietoris Sequences}\label{sec:MV}
In this section, we describe the Mayer--Vietoris sequence for cohomology, a key technical tool that relates the relative cohomology of a pair of spaces with that of their union and their intersection~\cite[pp 149--153 and 203--204]{H02}. We will apply the Mayer--Vietoris sequence to prove that the CPP is positively associated in Section~\ref{sec:positive} and to demonstrate a technical lemma in Section~\ref{sec:ratio}. A reader may wish to skip this section for now.

While we apply the Mayer--Vietoris sequence for relative cohomology below, we begin by describing the sequence for absolute cohomology for the purpose of simplicity. To this end, note first that if \( P \) and \( Q\) are two cell complexes with $P\subset Q$, then the inclusion map $P\hookrightarrow Q$ induces a restriction map $H^j\paren{Q}\to H^{j}\paren{P}$ that sends the homology class $\brac{f}$ of a cocycle $f$ supported on $Q$ to the cohomology class $\brac{f|_{P}}$ of its restriction to $P.$ 
Now, suppose that $P$ and $Q$ are subcomplexes of the same cell complex. We obtain four different restriction maps on cohomology $\phi_{P}: H^j\paren{P}\to H^j\paren{P\cap Q},$  $\phi_{Q}: H^j\paren{Q}\to H^j\paren{P\cap Q},$ $\psi_{P}: H^j\paren{P\cup Q}\to H^j\paren{P},$ and  $\psi_{Q}: H^j\paren{P\cup Q}\to H^j\paren{Q}.$ These maps fit together in a sequence
\begin{equation}\label{eq: mv diagram 1}
    H^j(P\cap Q)  
    \mvleftarrow{\phi_P-\phi_Q}
    H^j(P) \oplus H^j(Q) 
    \mvleftarrow{\psi_P \oplus \psi_Q}
    H^j(P\cup Q). 
\end{equation}
with the following properties.
\begin{enumerate}
    \item If $\brac{f}\oplus\brac{g}\in \im (\psi_P\oplus\psi_Q)$, then $\phi_P\paren{\brac{f}}-\phi_{Q}\paren{\brac{g}}=0.$
    \item In fact, the converse implication also holds: if  $\phi_P\paren{\brac{f}}-\phi_{Q}\paren{\brac{g}}=\brac{0}$, then there is an $h\in C^{j-1}\paren{P\cap Q}$ so that $\delta h +f|_{P\cap Q} =g|_{P\cap Q}.$ The map $h$ can be extended by zero to define a cochain on all of $P$ so we may replace $f$ with $f+\delta h$ if necessary to find a representative of the same cohomology class of $P$ so that $f|_{P\cap Q}=g|_{P\cap Q}.$ Then we can define $F\in C^i\paren{P\cup Q}$ by $F\paren{\sigma}=f\paren{\sigma}$ if $\sigma \in P$ and $F\paren{\sigma}=g\paren{\sigma}$ otherwise. Since $f$ and $g$ are cocycles on $P$ and $Q,$ it follows that $F$ is a cocycle supported on $P\cup Q,$  and hence \( [f] \oplus [g] \in \im (\psi_x \oplus \psi_Q).\)
\end{enumerate}
Combining the above observations, it follows that the sequence in~\ref{eq: mv diagram 1} is \emph{exact}, i.e., that
$$
    \psi_P\oplus\psi_Q(F)=f\oplus g \quad \text{and} \quad \im \psi_P\oplus\psi_Q = \ker \paren{\phi_P- \phi_Q}.
$$
This is not quite enough to compute the cohomology groups of $P\cup Q$ in terms of those of $P,$ $Q$, and $P\cap Q$; we also require knowledge of the image of $\phi_P- \phi_Q$ and the kernel of $\psi_P\oplus\psi_Q.$ It turns out that the coboundary map provides the required information. Specifically, it induces a map $\delta^*:H^j\paren{P\cap Q}\to H^{j+1}\paren{P\cap Q}$ so that the following sequence is exact in the sense that the image of each map is the kernel of the next:

\begin{center}
    \begin{tikzcd}
            \cdots & H^j(P\cap Q) \arrow[l, "{\delta^*}",swap] & H^j(P) \oplus H^j(Q) \arrow[l,"{\phi_P- \phi_Q}",swap] & H^j(P\cup Q) \arrow[l,"{\psi_P\oplus\psi_Q}",swap]  & \hphantom{0} \arrow[l, "{\delta^*}",swap]\\
               \hphantom{\cdots} & H^{j-1}(P\cap Q) \arrow[l, "{\delta^*}",swap]
        & \makebox[\widthof{$H^j(P) \oplus H^j(Q)$}][c]{$\cdots\hfill \cdots$}\arrow[l,"{\phi_P- \phi_Q}",swap]
        &  H^0(P\cup Q) \arrow[l,"{\psi_P\oplus\psi_Q}",swap] & 0 \lar.
    \end{tikzcd}
\end{center}
We will not use the precise construction of $\delta^*$ here, only that the sequence is exact.


This construction extends to relative cohomology. Assume that $A$ and $B$ are subcomplexes of $P$ and $Q,$ respectively. Then, the Mayer--Vietoris sequence for relative cohomology is as follows.
\begin{center}
    \begin{tikzcd}[arrows=to,cramped, sep = small]
            \cdots & H^j(P\cap Y, A \cap B) \lar & H^j(P,A) \oplus H^j(Y,B) \lar & H^j(P\cup Y, A\cup B) \lar  & \hphantom{0}\lar\\
               \hphantom{\cdots} & H^{j-1}(P\cap Y, A \cap B) \lar 
        & \makebox[\widthof{$H^j(P,A) \oplus H^j(Y,B)$}][c]{$\cdots\hfill \cdots$} \lar
        &  H^0(P\cup Y, A\cup B) \lar & 0 \lar.
    \end{tikzcd}
\end{center}



\subsection{Alexander Duality}

Alexander duality relates the homology of a subset of Euclidean space with the cohomology of its complement. It generalizes the topological fact that the number of linearly independent loops of a bounded subset $X$ of $\R^2$ equals the number of bounded components of the complement $\R^2\smallsetminus X$. If $A\subset X$ is contained in an orientable compact $d$-dimensional manifold $S$, where $A$ is closed and $X$ is compact, this can be further generalized to relate the relative homology of the pair $(X,A)$ with the relative cohomology of  $(S\smallsetminus A, S\smallsetminus X)$~\cite[Theorem 6.2.17]{S95}:
$$
\label{eqn:alexander}
    H^{j}(S\smallsetminus A,S\smallsetminus X;\,\Z_q)\cong H_{d-j}(X,A;\,\Z_q).
$$ 
For example, if $X$ is a compact subset of $\R^2,$ and $A\subset X$ contains a single non-contractible loop then the rank of $H_1(X,A)$ will be one less than the rank of $H_1\paren{X}$, and $H^1(\R^2\smallsetminus A,\R^2\smallsetminus X)$ will also have rank one less than $H^1(\R^2\smallsetminus A,\R^2\smallsetminus X).$ 

\section{The coupling between the CPP and the Potts lattice Higgs model} \label{sec:coupling}

In this section, we prove Theorem~\ref{thm:coupling} and discuss some of its consequences.

\subsection{Derivation of the Coupling}

For clarity of notation: $\sigma$ always refers to an $(i+1)$-cell, $\epsilon$ always refers to an $i$-cell, $j$-cells contained within a given $j$-dimensional percolation subcomplex are called ``open", and are otherwise called ``closed.''

\begin{proof}[\textbf{Proof of Theorem~\ref{thm:coupling}}] \label{pf:coupling}

We first compute the marginals of \( \kappa\paren{f,P_2,P_1}.\) To this end, note first that
\begin{align*}
    \kappa_1(f)&\coloneqq\sum_{P_1}\sum_{P_2}\kappa\paren{f,P_2,P_1}
    \\&\propto
    \sum_{P_1}\sum_{P_2}\Bigl[\prod_{\epsilon\in X^{\brac{i}}}\pigl[I_0\bigl(P_1(\epsilon)\bigr)+k_1 P_1(\epsilon)I_0\bigl(f(\epsilon)\bigr)\pigr]
    \prod_{\sigma\in {X^{\brac{i+1}}}}\pigl[I_0\bigl(P_2(\sigma)\bigr)+k_2 P_2(\sigma)I_0 \bigl(\delta f(\sigma) \bigr) \pigr]\Bigr].
\end{align*}
Changing the order of the sums and products, it follows that the previous expression is equal to
\begin{align*}
    &\prod_{\epsilon\in X^{[i]}}\pigl[1+k_1 I_0\bigl(f(\epsilon)\bigr)\pigr]\prod_{\sigma\in X^{[i+1]}} \pigl[1+k_2 I_0\bigl(\delta f(\sigma)\bigr)\pigr]
    \\
    &\qquad =\prod_{\epsilon\in X^{[i]}}\pigl[1+(e^{\beta_1}-1)I_0\bigl(f(\epsilon)\bigr)\pigr]\prod_{\sigma\in X^{[i+1]}} \pigl[1+(e^{\beta_2}-1)I_0\bigl(\delta f(\sigma)\bigr)\pigr]\\
    &\qquad =\prod_{\epsilon\in {X^{\brac{i}}}}e^{{\beta_1} I_0(f(\sigma))}\prod_{\sigma\in {X^{\brac{i+1}}}}e^{{\beta_2} I_0(\delta f(\sigma))}.
\end{align*}
Next, for the second marginal, we have
\begin{align}\label{eqn:kappa2}
\begin{split}
     \kappa_2(P_2,P_1)&\coloneqq\sum_{f}\kappa\paren{f,P_2,P_1}\\
   &\propto \sum_f\Bigl[  \prod_{\epsilon\in {X^{\brac{i}}}}\pigl[I_0\pigl(P_1(\epsilon)\bigr)+k_1 P_1(\epsilon)I_0\bigl(f(\epsilon)\bigr)\pigr]\prod_{\sigma\in {X^{\brac{i+1}}}}\pigl[I_0\bigl(P_2(\sigma)\bigr)+k_2 P_2(\sigma)I_0\bigl(\delta f(\sigma)\bigr)\pigr]\Bigr]\\
    &={k_1}^{|P_1|}{k_2}^{|P_2|} \sum_f \prod_{\epsilon \colon P_1(\epsilon)=1}I_0\bigl(f(\sigma)\bigr)\prod_{\sigma \colon P_2(\sigma)=1}I_0\bigl(\delta f(\sigma)\bigr)\\
    &={k_1}^{|P_1|}{k_2}^{|P_2|}\bigl|\bigl\{f \colon \forall\epsilon\in P_1, f(\epsilon)=0,\, \forall\sigma\in P_2, \delta f(\sigma)=0\bigr\}\bigr|\\
    &\propto (1-p_1)^{|{X^{\brac{i}}}|-|P_1|}p_1^{|P_1|}(1-p_2)^{|{X^{\brac{i+1}}}|-|P_2|}p_2^{|P_2|}\\ &\qquad\cdot\bigl|\bigl\{f \colon \forall\epsilon\in P_1, f(\epsilon)=0,\, \forall\sigma\in P_2, \delta f(\sigma)=0\bigr\}\bigr|.
    \end{split}
\end{align}
Since
\[
Z^{i}(P_2,P_1) = \{f \colon \forall\epsilon, f(\epsilon)=0,\, \forall\sigma, \delta f(\sigma)=0\},
\]
and, as we mentioned in Section \ref{sec:topos}, we have $H^{i}(P_2,P_2)=Z^{i}(P_2,P_1)$, it follows that 
\begin{align*}
    \kappa_2(P_2,P_1) \propto (1-p_1)^{|{X^{\brac{i}}}|-|P_1|}p_1^{|P_1|}(1-p_2)^{|{X^{\brac{i+1}}}|-|P_2|}p_2^{|P_2|}|H^i(P_2,P_1;\,\Z_q)|.
\end{align*}

Now we derive the conditional distributions of \(\kappa(f,P_2,P_1)\). To this end, note first that
\begin{align*}
    \kappa(f \mid P_2,P_1)&\propto \prod_{\epsilon\in {X^{\brac{i}}}}\pigl[I_0\bigl(P_1(\epsilon)\bigr)+k_1 P_1(\epsilon)I_0\bigl(f(\epsilon)\bigr)\pigr]
    \prod_{\sigma\in {X^{\brac{i+1}}}}\pigl[I_0\bigl(P_2(\sigma)\bigr)+k_2 P_2(\sigma)I_0\bigl(\delta f(\sigma)\bigl)\pigr] 
    \\ &= \prod_{\epsilon \colon P_1(\epsilon)=1}hI_0(f(\epsilon))\prod_{\sigma P_2(\sigma)=1}KI_0(\delta f(\sigma)).
\end{align*}
In other words, given \( P_1 \) and \(P_2 \), if either $\delta f(\sigma)\neq 0$ for some $\sigma\in P_2$ or  $f(\epsilon)\neq 0$ for some $\epsilon \in P_1$, then \( \kappa( f\mid P_2,P_1)=0, \) and otherwise, the distribution of \( \kappa( \cdot \mid P_2,P_1)\) is uniform.
For the other conditional marginal distribution, we have
\begin{align*}
    \kappa(P_2,P_1 \mid f)\propto& \prod_{\epsilon\in {X^{\brac{i}}}}\Bigl[ I_0\bigl(P_1(\epsilon)\bigr)+k_1 P_1(\epsilon)I_0\bigl(f(\epsilon)\bigr) \Bigr]
    \\&\qquad\cdot \prod_{\sigma\in {X^{\brac{i+1}}}}\Bigl[I_0\bigl(P_2(\sigma)\bigr)+k_2 P_2(\sigma)I_0\bigl(\delta f(\sigma)\bigr) \Bigr] 
    \\
    = 
    &\prod_{\epsilon \colon f(\epsilon)=0}\Bigl[I_0\bigl(P_1(\epsilon)\bigr)+k_1 P_1(\epsilon) \Bigr]\prod_{\epsilon \colon f(\epsilon)\neq 0}I_0\bigl(P_1(\epsilon)\bigr)
    \\&\qquad\cdot\prod_{\sigma \colon \delta f(\sigma)=0}\Bigl[I_0\bigl(P_2(\sigma)\bigr)+k_2 P_2(\sigma) \Bigr]\prod_{\sigma \colon \delta f(\sigma)\neq 0}I_0\bigl(P_2(\sigma)\bigr) \\
    \propto &\prod_{\epsilon \colon f(\epsilon)=0} \Bigl[(1-p_1)I_0\bigl(P_1(\epsilon)\bigr)+p_1P_1(\epsilon)\Bigr] \prod_{\epsilon \colon f(\epsilon)\neq 0}I_0\bigl(P_1(\epsilon)\bigr) \\
    & \qquad\cdot \prod_{\sigma \colon \delta f(\sigma)=0} \Bigl[(1-p_2)I_0 \bigl(P_2(\sigma)\bigr)+p_2P_2(\sigma)\Bigr] \prod_{\sigma \colon\delta f(\sigma)\neq 0}I_0\bigl(P_2(\sigma)\bigr).
\end{align*}
In words, the $i$-cells on which $f$ vanishes and the $(i+1)$ cells on which $\delta f$ is zero are included independently. This concludes the proof.

\end{proof}

\subsection{Topological Interpretation of Wilson Line Observables}

In this section, we apply the coupling from Theorem~\ref{thm:coupling} to express Wilson line expectations to be expressed in terms of the probability of a topological event in the CPP. 

\begin{proof}[\textbf{Proof of Theorem \ref{thm:W=V}}]
    It suffices to show that $\E_\kappa(W_\gamma\mid V_\gamma)=1$ and $\E_\kappa(W_\gamma\mid V_\gamma^c)=0.$  To this end, fix \(\gamma\),  \(P_2\), and \(P_1,\) and let \( f \sim \kappa(\cdot\mid P_2,P_1). \) 

    If $[\gamma]=0$, then $\gamma=\sigma+\partial \tau$ for some $\sigma\in C_i(P_1)$ and $\tau\in C_{i+1}(P_2)$. Since $f$ is sampled uniformly from $Z^{i}(P_2,P_1;\,\Z_q)$, we have 
    $$
        f(\gamma)=f(\sigma)+f(\partial \tau)=f(\sigma)+\delta f(\tau)=0+0=0,
    $$
    where we used that $f$ vanishes on $P_1$ and $\delta f$ vanishes on $P_2.$ Thus $$W_\gamma = \exp \bigl( \frac{2\pi i}{q} f(\gamma)\bigr)= 1.$$

The conditional distribution of $f$ given $\paren{P_2,P_1}$ can be sampled by fixing a basis $\set{f_1,\ldots,f_k}$ of the vector space $H^{i}(P_2,P_1;\,\Z_q),$ letting $a_1,\ldots,a_k$ be distributed independently and uniformly on $\Z_q,$ and setting
$$f=a_1f_1+\ldots a_kf_k.$$
Now assume $[\gamma]\neq0$. By the Universal Coefficient Theorem there exists a dual $[\gamma]^*\in H^{i}(P_2,P_1;\,\Z_q)$ so that $[\gamma]^*([\gamma])=1.$ In particular, $f_j\paren{\brac{\gamma}}\neq0$ for at least one index $j$. By multiplying each basis element by a non-zero element of $\Z_q$ we may assume, without loss of generality, that $f_j(\gamma) =1$ for $1\leq j\leq m$ and $f_j(\gamma)=0$ otherwise. Then
$$f(\gamma)=a_1+\ldots+a_m$$
is a sum of i.i.d. uniform elements of $\Z_q,$ and is itself uniformly distributed on $\Z_q.$ This implies in particular that $\E_\kappa(W_\gamma\mid V_\gamma^c)=0,$ which is the desired conclusion.


\end{proof}

\subsection{Potts Lattice Higgs in General Gauge}
\label{sec:generalgauge}
The proof of Theorem~\ref{thm:coupling} can be modified in a straightforward fashion to produce a coupling between the CPP and the Potts lattice Higgs model in general gauge. Recall that $\nu=\nu_{\beta_2,\beta_1,q,i}$ is the measure induced by the Hamiltonain in~\eqref{eq:generalgauge}. 

\begin{corollary}
    Under the assumptions of Theorem~\ref{thm:coupling}, there is a coupling $\hat{\kappa}=\hat{\kappa}_{k_2,k_1,i,q}$, with $\hat{\kappa}\colon (C^i(X;\,\Z_q)\times C^{i-1}(X;\,\Z_q))\times (\Xi^{i+1}\times \Xi^{i})\to \brac{0,1},$ defined by
    \begin{align*}
        \hat{\kappa}((f,g),(P_2,P_1))\propto &\prod_{\epsilon\in X^{\brac{i}}} \Bigl[I_0\bigl(P_1(\epsilon)\bigr)+k_1\cdot P_1(\epsilon)\cdot I_{\delta g(\epsilon)}\bigl(f(\epsilon) \bigr) \Bigr]
        \\&\qquad\cdot
        \prod_{\sigma\in X^{\brac{i+1}}}\Bigl[I_0\bigl(P_2(\sigma)\bigr)+k_2\cdot P_2(\sigma)\cdot I_0\bigl(\delta f(\sigma)\bigr)\Bigr]
    \end{align*}
    which satisfies the following. 
    \begin{itemize}
        \item The marginal distribution on $\paren{f,g}$ is $\nu_{\beta_2,\beta_1,q,i,X}$ (the Potts lattice Higgs model in general gauge).
        \item The marginal distribution on $\paren{P_2,P_1}$ is $\rho_{p_2,p_1,q,i,X}$ (the CPP). 
        \item The conditional distribution on $\paren{P_2,P_1}$ given $(f,g)$  is, respectively, independent percolation with probability $p_2$ on the set of $(i+1)$-cells so that $\delta f \paren{\sigma}=0$ and probability $p_1$ on the set of $i$-cells so that ${f\paren{\epsilon}=\delta g(\epsilon)}.$   
        \item The conditional distribution of $f$ given $\paren{P_2,P_1}$ and $g$ is the uniform distribution on the set $\delta g+ Z^i\paren{P_2,P_1;\,\Z_q}$  and the conditional distribution of $g$ given $\paren{P_2,P_1}$ is uniform on $ C^{i-1}(X;\,\Z_q)).$ 
    \end{itemize}
\end{corollary}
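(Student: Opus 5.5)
The plan is to reduce to Theorem~\ref{thm:coupling} via the change of variables $(f,g)\mapsto(f-\delta g,g)$, which is a bijection of $C^i(X;\,\Z_q)\times C^{i-1}(X;\,\Z_q)$ onto itself. Writing $f'=f-\delta g$, we have $\delta f'=\delta f$ because $\delta\circ\delta=0$, and $I_{\delta g(\epsilon)}(f(\epsilon))=I_0(f'(\epsilon))$. Consequently
\[
\hat\kappa((f,g),(P_2,P_1))\propto \kappa(f-\delta g,P_2,P_1),
\]
with the proportionality constant independent of $g$. So under $\hat\kappa$ the pair $(f',(P_2,P_1))$ has law $\kappa$, $g$ is uniform on $C^{i-1}(X;\,\Z_q)$, and $g$ is independent of $(f',P_2,P_1)$. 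I will verify each bullet from this description.

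For the first bullet, sum over $(P_2,P_1)$. Exactly as in the first computation of the proof of Theorem~\ref{thm:coupling}, the products factor and give
\[
\prod_\epsilon e^{\beta_1 I_0(f(\epsilon)-\delta g(\epsilon))}\prod_\sigma e^{\beta_2 I_0(\delta f(\sigma))}=e^{-\tilde{\mathcal H}(f,g)},
\]
which is $\nu$ with $\beta_i'=\beta_i$.

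For the second bullet, summing over $(f,g)$ equals $|C^{i-1}|$ times the sum over $f'$ of $\kappa(f',P_2,P_1)$. By Theorem~\ref{thm:coupling}, this is proportional to $\rho$.

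For the third bullet, fix $(f,g)$. The weight then factors over cells exactly as in the conditional computation in Theorem~\ref{thm:coupling}, with $f(\epsilon)=0$ replaced by $f(\epsilon)=\delta g(\epsilon)$. This gives independent percolation with parameter $p_1$ on the cells where $f=\delta g$, and parameter $p_2$ on the cells where $\delta f=0$.

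For the fourth bullet, fix $(P_2,P_1)$ and $g$. The weight is nonzero exactly when $f-\delta g\in Z^i(P_2,P_1;\,\Z_q)$, and on that set it equals the constant $k_1^{|P_1|}k_2^{|P_2|}$. Hence $f$ is uniform on $\delta g+Z^i(P_2,P_1;\,\Z_q)$. Summing over $f$, the marginal weight of $g$ given $(P_2,P_1)$ is $k_1^{|P_1|}k_2^{|P_2|}\,|Z^i(P_2,P_1)|$, which does not depend on $g$. So $g$ is uniform on $C^{i-1}(X;\,\Z_q)$.

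The only point needing care is the identity $\delta(f-\delta g)=\delta f$, which relies on $\delta^2=0$. It is exactly what makes the plaquette factor unchanged under the substitution. Everything else is a verbatim repetition of the computations in the proof of Theorem~\ref{thm:coupling}, so I do not expect a genuine obstacle.
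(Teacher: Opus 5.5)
Your proposal is correct and is essentially the paper's argument: the paper likewise reruns the computations of Theorem~\ref{thm:coupling} with $I_0(f(\epsilon))$ replaced by $I_{\delta g(\epsilon)}(f(\epsilon))$. It then uses that, for each $g$, the number of compatible $f$ equals $|Z^i(P_2,P_1;\,\Z_q)|$ independently of $g$ (sampling $f=h+\delta g$), which is exactly your translation $f\mapsto f-\delta g$ together with $\delta\circ\delta=0$. Doing this as an up-front change of variables, as you do, simply makes explicit that $\hat{\kappa}\propto\kappa(f-\delta g,P_2,P_1)$ and that $g$ is uniform and independent of $(f-\delta g,P_2,P_1)$.
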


Note that while $\delta g$ is necessarily an element of $Z^i\paren{P_2;\,\Z_q},$ it may not be a member of of $Z^i\paren{P_2,P_1;\,\Z_q}.$ 

\begin{proof}
The proof structure is very similar to that of Theorem~\ref{thm:coupling}. We note only the changes. To compute the marginal on $\paren{f,g}$ it suffices to simply replace $I_{0}(f(\epsilon))$ with $I_{\delta g(\epsilon)}(f(\epsilon))$ in the computation of the marginal of $\kappa$ on $f.$ \\*
    For the marginal 
    $$  
        \hat{\kappa}_2 (P_2,P_1)\coloneqq\sum_{f,g}\kappa\bigl((f,g),\paren{P_2,P_1}\bigr)
    $$
    on $(P_2,P_1)$, the proof proceeds identically as in~\eqref{eqn:kappa2} except that the factor 
    $$ 
        \bigl\{ f \mid \forall\epsilon, f(\epsilon)=0,\, \forall\sigma, \delta f(\sigma)=0 \bigr\}
        =\bigl|Z^i\paren{P_2,P_1;\,\Z_q}\bigr|
    $$
    in the final line is replaced with
    \begin{equation*}
        \sum_{g\in C^{i-1}(X)} \big|\{f \mid \forall\epsilon, f(\epsilon)=\delta g(\epsilon),\, \forall\sigma, \delta f(\sigma)=0\}\big|,
    \end{equation*}
which in turn equals $|{C^{i-1}(X)} | \times |Z^i (P_2,P_1;\,\Z_q)|$ since the summand does not depend on $g.$ 
    Thus
    $$
        \hat{\kappa}_2(P_2,P_1)
        =
        |C^{i-1}(X)|
        \kappa_2({P_2,P_1}) 
        \propto \kappa_2({P_2,P_1}) \propto \rho({P_2,P_1}).$$
For the conditional distribution of $(f,g)$ given $(P_2,P_1)$, we again plug in $I_{\delta g(\epsilon)}(f(\epsilon))$, to get that the probability is 0 if for any $\sigma\in P_2$, $\delta f(\sigma)\neq 0$ or any $\epsilon \in P_1$, $f(\epsilon)\neq \delta g(\epsilon)$, and otherwise uniform. Because the number of such $f$ is not dependent on $g$, we can select $g$ first by independent choices from $\Z_q$ on each $(i-1)$-cell, uniformly choose a cocycle $h$, and then define $f$ as $h+\delta g$.\\*
For the conditional of $(P_2,P_1)$ given $(f,g)$, we continue to substitute $I_{\delta g(\epsilon)}(f(\epsilon))$, which results in probability 0 if $P_1(\epsilon)=1$ for any $\epsilon$ so that $f(\epsilon)\neq \delta g (\epsilon)$ or $P_2(\sigma)=1$ for any $\sigma$ so that $df(\sigma)=0$, and is, as above, otherwise independent percolation.
\end{proof}





\subsection{Equivalence with the Ghost Vertex Construction}
\label{sec:ghost}
In this section, we explain why the $i=0$ case of the CPP is equivalent to the graphical representation of the Potts model with external field obtained by introducing a ``ghost vertex'' to the random cluster model. To this end, let $G$ be a graph and denote by $G'$ the graph obtained by adding one additional ``ghost'' vertex $g$ to the vertex set of $G$ and an edge $e_v=\paren{v,g}$ for each vertex $v$ of $G.$ Also, for $f\in C^0\paren{G;\,\Z_q}$, let $f' \in C^0\paren{G';\,\Z_q}$ be the ``extension by $0$'' of $f$; that is $f'\paren{v}=f\paren{v}$ for any vertex $v$ of $G$ and $f'\paren{g}=0$. Moreover, for an edge $e$, let 
$$
    p\paren{e} = \begin{cases}p_2\coloneqq \frac{k_2}{1+k_2} & \text{if } e\in E(G),\text{ and}\\ p_1\coloneqq\frac{k_1}{1+k_1} & \text{if } e\notin E(G). \end{cases}
$$
Define a coupling $\kappa'=\kappa'_{k_2,k_1,q,G},$ $\kappa'\colon C^0\paren{G;\,\Z_q}\times \Xi^1\paren{G'}\to \brac{0,1}$ by
\begin{align*}
    \kappa'\paren{f,P}
    \propto &\prod_{e=(g,v)}\Bigl[ \paren{1-p_1}I_0(P(e)) + p_1 P(e) I_0\bigl(\delta f'(e)\bigr)\Bigr] 
    \\  &\qquad\cdot 
    \prod_{e \in E(G)}\Bigl[\paren{1-p_2}I_0\bigl(P(e)\bigr) + p_2 P(e) I_0\bigl(\delta f'\paren{e}\bigr)\Bigr].
    \end{align*}\
That is, $\kappa'$ is obtained from the usual coupling between the random cluster model on $G'$ (with two different types of edges) and the Potts model on $G'$ conditioned on $f'(g))=0$. 

\begin{prop}
    Assume the hypotheses of Theorem~\ref{thm:coupling} with $X=G.$ Let $P_1\in \Xi^0(G)$ and $P_2\in \Xi^1({G})$ and let $P\in \Xi^1({G'})$ be obtained from $P_2$ by adding all edges of the form $({v,g})$ where $v\in P_1.$ Then 
    $$
        \kappa(f,P_2,P_1)=\kappa'(f',P)
    $$
    where $\kappa'=\kappa'({k_2,k_1,q,G}).$ 
    In particular, $\mu(f)=\mu'(f')$ where $\mu'$ is the usual Potts measure on $G'$ conditioned on $f(g)=0$, and $\rho(P_2,P_1)=\rho'(P')$, where $\rho'$ is the random cluster measure with probability $p_2$ assigned to edges in $G$ and $p_1$ assigned to edges in $G'\setminus G.$ 
\end{prop}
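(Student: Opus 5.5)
The plan is a factor-by-factor comparison of the two unnormalized weights, followed by marginalization. The map $(P_2,P_1)\mapsto P$ is a bijection from $\Xi^1(G)\times\Xi^0(G)$ onto $\Xi^1(G')$. Indeed, $\Xi^0(G)$ consists of subsets of vertices (with $i=0$, the $(i-1)$-skeleton is empty), and $\Xi^1(G')$ consists of subsets of edges of $G'$. Each edge of $G'$ is either an edge of $G$ or a ghost edge $e_v$, and the latter correspond exactly to vertices $v$ of $G$. Similarly, $f\mapsto f'$ is a bijection from $C^0(G;\Z_q)$ onto $\{h\in C^0(G';\Z_q): h(g)=0\}$.

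\textbf{Factor comparison.} Orient each ghost edge $e_v$ from $g$ to $v$, so that $\delta f'(e_v)=f'(v)-f'(g)=f(v)$; the indicator does not depend on the orientation anyway. Then $I_0(\delta f'(e_v))=I_0(f(v))$. Also $P(e_v)=P_1(v)$ by construction. For $e\in E(G)$ we have $\delta f'(e)=\delta f(e)$ and $P(e)=P_2(e)$. Hence the factor of $\kappa'$ at $e_v$ is
\[
(1-p_1)\bigl[I_0(P_1(v))+k_1P_1(v)I_0(f(v))\bigr],
\]
and the factor at $e\in E(G)$ is
\[
(1-p_2)\bigl[I_0(P_2(e))+k_2P_2(e)I_0(\delta f(e))\bigr].
\]
Taking the product, $\kappa'(f',P)$ equals $(1-p_1)^{|V(G)|}(1-p_2)^{|E(G)|}$ times the unnormalized weight of $\kappa(f,P_2,P_1)$. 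This constant is independent of $(f,P_2,P_1)$. Both measures are normalized over corresponding index sets via the bijection, so the normalized measures agree: $\kappa(f,P_2,P_1)=\kappa'(f',P)$. One point needs care. The domain of $\kappa'$ as written is $C^0(G;\Z_q)\times\Xi^1(G')$, with $f$ entering through $f'$, so no spins at $g$ other than $0$ are summed over. That is exactly why the normalizations match.

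\textbf{Marginals.} Summing over $(P_2,P_1)$, equivalently over $P$, gives $\mu(f)$ on the left by Theorem~\ref{thm:coupling}. On the right, the standard Edwards--Sokal computation (sum each factor over $P(e)\in\{0,1\}$) gives a weight proportional to $\prod_{e\in E(G')}e^{\beta(e)I_0(\delta f'(e))}$. This is the Potts measure on $G'$ restricted to configurations with $f'(g)=0$, i.e.\ $\mu'$ conditioned on $f(g)=0$. Summing over $f$ on the right gives
\[
\prod_e p(e)^{P(e)}(1-p(e))^{1-P(e)}\cdot\#\{h\in C^0(G'): h(g)=0,\ \delta h(e)=0\ \forall e\in P\}.
\]
The count equals $q^{k(P)-1}$, where $k(P)$ is the number of components of $(V(G'),P)$: $h$ is constant on components, and the component of $g$ is forced to $0$. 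This is proportional to $q^{k(P)}$, the random cluster weight. By Theorem~\ref{thm:coupling} the left side sums to $\rho(P_2,P_1)$. As a consistency check, $|H^0(P_2,P_1)|$ counts functions constant on components of $P_2$ and vanishing on components containing a vertex of $P_1$, and this also equals $q^{k(P)-1}$.

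\textbf{Expected obstacle.} Nothing here is genuinely hard. The main risks are bookkeeping: getting the normalizing constants right, handling the orientation convention for $\delta f'$ on ghost edges, and recognizing that conditioning $f'(g)=0$ is harmless by the global $\Z_q$ symmetry of the Potts model on $G'$. Because of that symmetry, $\rho'$ is also the unconditioned random cluster measure.
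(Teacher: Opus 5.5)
Your proof is correct and follows the paper's argument. It uses the same bijections $f\mapsto f'$ and $(P_2,P_1)\mapsto P$, the same edge-by-edge matching of factors via $\delta f'(e_v)=f(v)$ and $P(e_v)=P_1(v)$, and the same removal of the constant $(1-p_1)^{|V(G)|}(1-p_2)^{|E(G)|}$. Your only addition is an explicit check of the two marginal claims: the Potts weight restricted to $f'(g)=0$, and the count $q^{k(P)-1}$ matching $|H^0(P_2,P_1)|$. The paper treats both as immediate consequences of the equality of couplings.
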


\begin{proof}
First, note that $f$ is in bijective correspondence with $f'$, and $(P_2,P_1)$ is in bijective correspondence with $P'$. Thus, the statement is immediate if the associated weights are proportional. We have that 
    \begin{align*}
        \kappa(f,P_2,P_1)\propto & \prod_{v\in X^{\brac{0}}}\Bigl[ I_0\bigl(P_1(v) \bigr)+k_1\cdot P_1(v)\cdot I_0 \bigl(f(\epsilon) \bigr)\Bigr] 
        \\&\qquad\cdot
        \prod_{e\in X^{\brac{1}}} \Bigl[ I_0(P_2(e))+k_2\cdot P_2(e)\cdot I_0(\delta f(e))\Bigr].
    \end{align*}

If $e=(g,v)$, then $P_1(v)=P'(e)$, and $I_0(f(e))=I_0(\delta f'(e))$. Similarly, if $e\in E(G)$, $P'(e)=P_2(e)$ and $\delta f'(e)=\delta f(e)$. Dividing each term by $(1-p_1)^{|X^{\brac{0}}|}(1-p_2)^{|X^{\brac{1}}|}$ yields  
    \begin{align*}
        \kappa'\paren{f',P'}\propto 
        &\prod_{e=(g,v)} \Bigl[I_0\bigl(P'(e)\bigr) + \frac{p_1}{1-p_1} P'(e) I_0\bigl(\delta f'(e)\bigr) \Bigr]
        \\ &\qquad \cdot  \prod_{e \in E(G)} \Bigl[I_0 \bigl(P'(e)\bigr) + \frac{p_2}{1-p_2} P'(e) I_0\bigl(\delta f'\paren{e} \bigr) \Bigr].
    \end{align*}
    Since $k_2=\frac{p_2}{1-p_2}$ and $k_1=\frac{p_1}{1-p_1},$ this concludes the proof.
\end{proof}

\section{Properties of the CPP}
We establish a number of basic properties of Coupled Plaquette Percolation.

\subsection{Special Cases of the CPP}
\label{sec:cases}

Let $X$ be a finite cell complex. The $i$-dimensional plaquette random cluster model~\cite{bernoulli-cell-complexes,duncan2025sharp,shklarov} with parameters $p\in\brac{0,1}$ and $q\in \N_{\geq 2}$ is the random $i$-dimensional percolation subcomplex of $X$ whose probability is weighted by the absolute $(i-1)$-homology with coefficients in $\Z_q.$ Specifically, define $\hat{\rho}=\hat{\rho}_{p,q,i,X}$ by 
$$\hat{\rho}\paren{P}\propto   p^{|P|}\paren{1-p}^{|X^{\brac{i}}|-|P|} \abs{H^{i-1}(P;\mathbb{Z}_q)}.$$
The definition of $\hat{\rho}$ is extended to $q=1$ by setting 
 $\hat{\rho}_{p,1,i,X}\propto   p^{|P|}\paren{1-p}^{|X^{\brac{i}}|-|P|}.$
That is, the $q=1$ case of the plaquette random cluster model is Bernoulli plaquette percolation. This naturally occurs as the $q\to 1$ limit of a related model where the choice of cohomology coefficients is in some sense decoupled from the choice of $q$ in exactly the same fashion as the auxiliary model defined in Section~\ref{sec:stochdom} below. 

Let $\pi^2_{p_2,p_1,q,i,X}$ and $\pi^1_{p_2,p_1,q,i,X}$ be the marginal distributions of $P_2$ and $P_1$ when $\paren{P_2,P_1}\sim \rho_{p_2,p_1,q,i,X}.$
By definition, $\pi^2_{p_2,0,q,i,X}$ has the distribution of the plaquette random cluster model, and $\pi^2_{p_2,0,1,i,X}\stackrel{d}{=}\hat{\rho}_{p,i+1,X}.$ Our next result, Proposition~\ref{prop:10}, collects two other special cases.

\begin{prop}\label{prop:10}
If $X$ is a finite cell complex then
    \begin{align*}
        \pi^2_{p_2,0,q,i,X}&\overset{d}{=}\hat{\rho}_{p_2,q,i+1,X},
        \pi^2_{p_2,1,q,i,X} \overset{d}{=}\hat{\rho}_{p_2,1,i+1,X},
    \end{align*}
    and
    \begin{align*}
        \pi^1_{0,p_1,q,i,X} & \overset{d}{=}\hat{\rho}_{p_*\paren{p,q},1,i,X},
    \end{align*}
    where
    $$
        p_*\paren{p,q}= \frac{p}{q-pq+p}.
    $$
    In addition, if $X$ satisfies  $H^{i-1}(X;\,\Z_q)=H^i(X;\,\Z_q)=0$ then
    $$
        \pi^1_{1,p_1,q,i,X} \overset{d}{=}\hat{\rho}_{p_1,q,i,X}.$$
    \end{prop}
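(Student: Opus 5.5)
We treat each identity separately. In every case one parameter is $0$ or $1$, and this forces one of the two percolation subcomplexes to be deterministic. It then suffices to compute $\abs{H^{i}(P_2,P_1;\Z_q)}=\abs{Z^i(P_2,P_1;\Z_q)}$ as a function of the remaining random complex and compare with the weight defining $\hat\rho$. Multiplicative factors that do not depend on the random complex are absorbed into the normalization. Throughout we use that $C^{i-1}(P_2,P_1)=0$, so relative cohomology equals relative cocycles, as noted in Section~\ref{sec:topos}.

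\emph{The cases $p_1\in\{0,1\}$.} If $p_1=0$ then $P_1=X^{(i-1)}$ almost surely, so there is no vanishing constraint and $Z^i(P_2,P_1)=Z^i(P_2)$. We have $\abs{Z^i(P_2)}=\abs{H^i(P_2)}\cdot\abs{B^i(P_2)}$. Since $P_2$ contains all $i$- and $(i-1)$-cells of $X$, we get $B^i(P_2)=\delta C^{i-1}(X)$, which does not depend on $P_2$. Hence the weight is proportional to $p_2^{|P_2|}(1-p_2)^{|X^{[i+1]}|-|P_2|}\abs{H^{i}(P_2;\Z_q)}$, which is $\hat\rho_{p_2,q,i+1,X}$. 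If $p_1=1$ then $P_1=X^{(i)}$, so a compatible cochain vanishes on every $i$-cell. Thus $Z^i(P_2,P_1)=\{0\}$ and the weight is $1$, so $P_2$ is Bernoulli percolation, i.e.\ $\hat\rho_{p_2,1,i+1,X}$.

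\emph{The case $p_2=0$.} Here $P_2=X^{(i)}$ has no $(i+1)$-cells, so the cocycle condition is vacuous. Then $Z^i(P_2,P_1)$ is the set of cochains vanishing on $P_1$, of size $q^{|X^{[i]}|-|P_1|}$. The weight becomes $p_1^{|P_1|}\bigl(q(1-p_1)\bigr)^{|X^{[i]}|-|P_1|}$. Normalizing each cell independently, this is Bernoulli percolation with parameter $p_1/(p_1+q(1-p_1))=p_*(p_1,q)$.

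\emph{The case $p_2=1$, which we expect to be the main step.} Now $P_2=X^{(i+1)}$, and $Z^i(X,P_1)$ consists of the cocycles of $X$ that vanish on the $i$-cells of $P_1$. We use the hypothesis $H^i(X)=0$: every such $f$ equals $\delta g$ for some $g\in C^{i-1}(X)$. The condition that $\delta g$ vanishes on $P_1$ says exactly that $g\in Z^{i-1}(P_1)$, because $P_1$ contains all $(i-1)$-cells and $C^{i-1}(P_1)=C^{i-1}(X)$. So $\delta$ maps $Z^{i-1}(P_1)$ onto $Z^i(X,P_1)$ with kernel $Z^{i-1}(X)$. By $H^{i-1}(X)=0$, this kernel equals $B^{i-1}(X)$, whose size is independent of $P_1$. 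Also $\abs{Z^{i-1}(P_1)}=\abs{H^{i-1}(P_1)}\,\abs{B^{i-1}(P_1)}$, and $B^{i-1}(P_1)=\delta C^{i-2}(X)=B^{i-1}(X)$ is again independent of $P_1$. Combining these, $\abs{Z^i(X,P_1)}\propto\abs{H^{i-1}(P_1;\Z_q)}$, which gives $\hat\rho_{p_1,q,i,X}$.

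Equivalently, this last case is the portion $H^{i-1}(X)\to H^{i-1}(P_1)\to H^i(X,P_1)\to H^i(X)$ of the long exact sequence of the pair $(X,P_1)$. The only care needed is to check that the auxiliary groups $B^{i-1}(X)$ and $B^{i-1}(P_1)$ genuinely do not depend on $P_1$.
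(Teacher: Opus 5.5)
Your proposal is correct and follows the paper's proof essentially step by step: the same reductions for $p_1\in\{0,1\}$ and $p_2=0$, and for $p_2=1$ the same coboundary-map argument (surjectivity from $H^i(X)=0$, kernel $Z^{i-1}(X)=B^{i-1}(X)=B^{i-1}(P_1)$ from $H^{i-1}(X)=0$). The only difference is cosmetic: you count via the first isomorphism theorem for $\delta\colon Z^{i-1}(P_1)\to Z^i(X,P_1)$, whereas the paper checks directly that the induced map $\delta^*\colon H^{i-1}(P_1)\to H^i(X,P_1)$ is injective and surjective.
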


\begin{proof}
    Since the presence or absence of $j$-cells in $X$ for $j>i+1$ does not influence the distribution of $\rho$, we may assume that $X=X^{(i+1)}.$ 

    The first claim follows from the observation that  $H^i(P_2,X^{(i-1)})= Z^i(P_2)$ so
    $$\bigl| H^i({P_2,X^{(i-1)}}) \bigr| = \bigl| Z^i(P_2) \bigr| =
    \bigl| H^i(P_2) \bigr| \bigl| B^i(P_2) \bigr| \propto \bigl| H^i(P_2) \bigr|$$
    since $B^i(P_2)=B^i\paren{X}$ does not depend on $P_2.$ To obtain the second claim, note that the only cochain compatible with $\paren{P_2,X^{\paren{i}}}$ is identically zero so $H^i({P_2,X^{(i)}})=0.$

    For the third claim, note that when $P_2$ contains no $(i+1)$-cells, the only constraint on compatible $i$-cochains is that they vanish on the $i$-cells of $P_1.$ Thus
    $$
        \bigl| H^i({X^{(i)},P_1})\bigr| = \bigl|C^i ( X,P_1)\bigr|=q^{|{X^{\brac{i}}|}-|{P_1}|},
        $$
    and hence
    \begin{align*}
        \rho_{0,p_1,q,i,X}(X^{(i)},P_1)&\propto \paren{\frac{p_1}{1-p_1}}^{|P_1|}q^{|X^{(i)}|-|P_1|}\propto   \paren{\frac{p_*\paren{p,q}}{1-p_*\paren{p,q}}}^{|P_1|}
        =\rho_{p_*\paren{p,q},0,1,i-1,X}.
    \end{align*}
    
To prove the fourth claim, we will demonstrate that $H^{i-1}(P_1)\cong H^{i}(X,P_1).$ Readers familiar with algebraic topology will recognize that the following argument is an application of the long exact sequence of a pair.

Let $f \in Z^{i-1}\paren{P_1}$ and consider the coboundary $\delta f.$ $\delta f$ vanishes on each $i$-cell of $P_1$ by definition and $\delta\paren{\delta f}=0$ so $\delta f\in Z^i\paren{X,P_1}=H^i\paren{X,P_1}.$ Since $\delta\circ\delta=0,$ $\delta$ sends each element of the homology class of $f$ to the same homology class in $H^i\paren{X,P_1}.$ That is, the coboundary map induces a well-defined map
$$\delta^*:H^{i-1}\paren{P_1}\to H^i\paren{X,P_1}.$$
We will show that $\delta^*$ is an isomorphism under the assumption that $H^{i-1}(X)=H^i(X)=0.$ First, if $\delta f=0$ as an element of $H^i\paren{X,P_1}=Z^i\paren{X,P_1}$ then $\delta f$ is the zero cochain. It follows that $f\in Z^{i-1}\paren{X}.$ Since $H^{i-1}\paren{X}=0,$ we must have that $f\in B^{i-1}\paren{X}.$ As $B^{i-1}(X)=B^{i-1}(P_1)$ $\brac{f}=0\in H^{i-1}\paren{P_1}$ and so $\delta^*$ is injective. On the other hand, let $g\in  H^{i}(X,P_1)=Z^i\paren{X,P_1}\subset Z^i\paren{X}.$ Since $H^i\paren{X}=0,$ there is a $f\in C^{i-1}\paren{X}$ satisfying $\delta f=g.$ By construction, $\delta f$ vanishes on each $i$-cell of $P_1$ so $f\in Z^1\paren{P_1}$ and $\delta^* \brac{f}=g.$ It follows that $\delta^*$ is surjective and and thus an isomorphism.

\end{proof}

\subsection{Positive Association}\label{sec:positive}
In this section we prove Theorem~\ref{thm:positive}, which states that the CPP is positively associated. We begin with a topological lemma.

\begin{lemma} \label{lem:homlattice}
Let $Z$ be a finite cell complex. If $X,Y$ are $(i+1)$-dimensional percolation subcomplexes of $Z$ and $A,B$ are $i$-dimensional percolation subcomplexes of $Z$ then
$$|H^i(X\cup Y,A\cup B)||H^i(X\cap Y,A\cap B)|\geq  |H^i(X,A)||H^i(Y,B)|$$
\end{lemma}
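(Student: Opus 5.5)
The plan is to avoid the Mayer--Vietoris sequence altogether and use the identification from Section~\ref{sec:topos}: for an $(i+1)$-dimensional percolation subcomplex $P$ and an $i$-dimensional percolation subcomplex $Q$, we have $C^{i-1}(P,Q)=0$, so $H^i(P,Q;\,\Z_q)=Z^i(P,Q;\,\Z_q)$. This is the space of cochains compatible with $(P,Q)$ in the sense of Definition~\ref{defn:compatible}. All four groups in the statement can therefore be viewed as linear subspaces of the single $\Z_q$-vector space $C^i(Z;\,\Z_q)$, since $q$ is prime.

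First I will check that $X\cup Y$ and $X\cap Y$ are again $(i+1)$-dimensional percolation subcomplexes, and that $A\cup B$ and $A\cap B$ are $i$-dimensional ones. This holds because all of them contain $Z^{(i)}$ (respectively $Z^{(i-1)}$) and their top cells are the unions or intersections of the top cells. So the identification above applies to all four pairs. Write $U=Z^i(X,A)$ and $V=Z^i(Y,B)$.

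Next come two elementary containments.
\begin{itemize}
\item $Z^i(X\cup Y,A\cup B)=U\cap V$. A cochain vanishes on every $i$-cell of $A\cup B$ and has $\delta f=0$ on every $(i+1)$-cell of $X\cup Y$ exactly when both sets of conditions hold separately.
\item $U+V\subseteq Z^i(X\cap Y,A\cap B)$. If $f\in U$ and $g\in V$, then $f+g$ vanishes on the $i$-cells of $A\cap B$, since both summands do. Likewise $\delta(f+g)=\delta f+\delta g$ vanishes on the $(i+1)$-cells of $X\cap Y$.
\end{itemize}

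Finally, the dimension formula $\dim(U\cap V)+\dim(U+V)=\dim U+\dim V$ gives
\[
\dim Z^i(X\cup Y,A\cup B)+\dim Z^i(X\cap Y,A\cap B)\geq \dim U+\dim V.
\]
Exponentiating with base $q$ yields the claimed inequality of cardinalities.

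I do not expect a real obstacle. The only point needing care is that the equality $H^i=Z^i$ genuinely requires the relative $(i-1)$-cochains to vanish, which is why the percolation-subcomplex hypothesis on all four pairs must be verified.
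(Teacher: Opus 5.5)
Your proof is correct, and it takes a different, more elementary route than the paper. The paper applies rank--nullity along the segment $H^i(X\cup Y,A\cup B)\xrightarrow{\phi}H^i(X,A)\oplus H^i(Y,B)\xrightarrow{\chi}H^i(X\cap Y,A\cap B)\xrightarrow{\delta}H^{i+1}(X\cup Y,A\cup B)$ of the relative Mayer--Vietoris sequence. This gives $b_i(X\cup Y,A\cup B)+b_i(X\cap Y,A\cap B)=b_i(X,A)+b_i(Y,B)+\nlty\phi+\rank\delta$, and the paper then drops the two nonnegative terms. You instead use $H^i=Z^i$ for all four pairs to place everything inside the common space $C^i(Z;\,\Z_q)$, which reduces the lemma to the Grassmann formula plus your two containments. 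Your argument is the cochain-level shadow of the same sequence. The map $\phi$ becomes the diagonal embedding $U\cap V\hookrightarrow U\oplus V$, so $\nlty\phi=0$ here, consistent with $H^{i-1}(X\cap Y,A\cap B)=0$. The map $\chi$ becomes $(f,g)\mapsto f-g$ with image $U+V$. The paper's slack $\rank\delta$ is exactly $\dim Z^i(X\cap Y,A\cap B)-\dim(U+V)$. Your version is self-contained and shows precisely when equality holds, namely when $U+V=Z^i(X\cap Y,A\cap B)$. The paper's version never uses $H^i=Z^i$, only exactness, so it proves the same inequality for arbitrary pairs of subcomplexes. That generality matters, for instance, under other boundary conditions where $C^{i-1}(P,Q)\neq 0$ and coboundaries must be quotiented out; there your subspace-intersection argument would need to be reworked.
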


\begin{proof}
As we are assuming that $q$ is prime, that $H^i(S,T;\,\Z_q)$ is a $\Z_q$-vector space, and that $|H^i(S,T;\,\Z_q)|=q^{b_i(S,T;\,\Z_q))}$, it suffices to show that
$$b_i(X\cup Y,A\cup B)+b_i(X\cap Y,A\cap B)\geq b_i(X,A)+b_i(Y,B).$$

We prove this by means of the following Mayer--Vietoris sequence on relative cohomology (see Section~\ref{sec:MV}).
$$H^{i+1}(X\cup Y,A\cup B) \xleftarrow{\delta} H^{i}(X\cap Y,A\cap B) \xleftarrow{\chi}H^{i}(X,A)\oplus H^{i}(Y,B) \xleftarrow{\phi}H^{i}(X\cup Y,A\cup B)$$

By exactness and the first isomorphism theorem we obtain the three equations
\begin{align*}
    \begin{cases}
        b_i(X\cup Y, A\cup B)=\rank \phi+\nlty\phi \cr
        b_i(X,A)+b_i(Y,B)=\rank \chi+\nlty\chi=\rank\chi+\rank\phi\cr
        b_i(X\cap Y,A\cap B)=\rank\delta+\nlty\delta=\rank\delta+\rank\chi.
\end{cases}
\end{align*}
Thus 
\begin{align*}
b_i(X\cup Y,A\cup B)+b_i(X\cap Y,A\cap B)&=\rank\phi+\nlty\phi+\rank\delta+\rank\chi\\
&\geq \rank\chi+\rank\phi\\
&=\rank\chi +\nlty \chi\\
&=\dim H^{i}(X,A)\oplus H^{i}(Y,B)\\
&=b_i(X,A)+b_i(Y,B).
\end{align*}

\end{proof}

\begin{proof}[\textbf{Proof of Theorem \ref{thm:positive}}]

Let $Z$ be a finite cell complex, let $q$ be a prime integer, let $i$ be a non-negative integer, and let $p_2,p_1\in \brac{0,1}.$ Set $\rho=\rho_{p_2,p_1,q,i,d,Z}.$ 
To show that $\rho$ is positively associated, it suffices to show that the lattice condition holds~\cite{FKG71}. To this end, let $X,Y$ be $(i+1)$-dimensional percolation subcomplexes of $Z$ and let $A,B$ are $i$-dimensional percolation subcomplexes of $Z.$ 
Let
$$r^{X,Y}_{A,B}=\frac{\rho(X\cup Y,A\cup B)\rho(X\cap Y,A\cap B)}{\rho(X,A)\rho(Y,B)}.$$
We again work in $k_2,k_1$ coordinates for brevity and clarity. Using this notation and applying Lemma~\ref{lem:homlattice}, we have
\begin{align*}
r^{X,Y}_{A,B}&=\frac{k_1^{|A\cup B|}k_2^{|X\cup Y|}|H^i(X\cup Y,A\cup B)|k_1^{|A\cap B|}k_2^{|X\cap Y|}|H^i(X\cap Y,A\cap B)|}{ k_1^{|A|}k_1^{|X|}|H^i(X,A)|k_1^{|B|}k_2^{|Y|}|H^i(Y,B)|}\\
&=\frac{k_1^{\abs{A\cup B} +\abs{ A\cap B}}}{k_1^{\abs{A}+\abs{B}}}\frac{k_2^{\abs{X\cup Y} +\abs{ X\cap Y}}}{k_2^{\abs{X}+\abs{Y}}}\frac{|H^i(X\cup Y,A\cup B)||H^i(X\cap Y,A\cap B)|}{ |H^i(X,A)||H^i(Y,B)|}\\
&=\frac{|H^i(X\cup Y,A\cup B)||H^i(X\cap Y,A\cap B)|}{ |H^i(X,A)||H^i(Y,B)|}
\geq 1
\end{align*}
by Lemma~\ref{lem:homlattice}. This concludes the proof.
\end{proof}

\subsection{Stochastic Domination}\label{sec:stochdom}

Theorem~\ref{thm:stochdom} is a special case of a more general statement for an auxiliary model where the parameter $q$ is allowed to vary independently of the coefficient field for cohomology.

\begin{definition}[Auxiliary Model]
Let $X$ be a finite cell complex, let $q$ be a prime integer, let $i$ be a non-negative integer, let $0\leq p_2,p_1\leq 1$ and $r$ be a non-negative real number. 
Define $\hat{\rho}=\hat{\rho}_{p_2,p_1,r,i,q,X}$ by
$$\hat{\rho}({P_2,P_1})\propto  p_1^{|P_1|}(1-p_1)^{|X^{(i)}|-|P_1|} p_2^{|P_2|}(1-p_2)^{|X^{(i+1)|}-|P_2|}r^{b_i(P_2,P_1;\mathbb{Z}_q)}.$$
\end{definition}

Note that $\hat{\rho}_{p_2,p_1,q,i,q,X}=\rho_{p_2,p_1,q,i,X}$, and when $r=1$, $\hat{\rho}$ is independent percolation on $i$ and $(i+1)$-cells with probabilities $p_2,p_1$ respectively. The proof of positive association for $\rho$ extends immediately to~$\hat{\rho}$ when $r\geq 1.$ 

For the subsequent proof, we require Holley's theorem, as stated in~\cite{G06}. Let $Z$ be a finite set. For a probability measure $\pi$ on $\Omega\coloneqq \set{0,1}^{\mathrlap{Z}},\,$ $z\in Z,$ and $\xi\in\Omega$ define the one-point conditional probability by
$$
    \pi^{\xi}\paren{z}
    \coloneqq
    \pi\bigl(\omega\paren{z}=1\mid \omega|_{Z\smallsetminus z}=\xi|_{Z\smallsetminus z} \bigr).
$$ 

\begin{theorem}[Holley's theorem] Let $Z$ be a finite set, and let $\pi_1,\pi_2$ be strictly positive measures on $\Omega=\{0,1\}^Z$. Then $\pi_1 \leq_{st}\pi_2$ if and only if 
$$\pi_1^{\xi}(z)\leq  \pi_2^{\zeta}(z)$$
for all $z\in Z$ and all pairs $\xi, \zeta\in \omega$ with $\xi\leq\zeta.$ 
\end{theorem}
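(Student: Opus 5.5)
The direction used in the rest of the paper is the sufficiency one: the one-point conditional inequalities imply $\pi_1\leq_{st}\pi_2$. I plan to prove it with the classical dynamical coupling, following~\cite{G06}. Write $Z$ for the finite set and $\Omega=\set{0,1}^Z$. I will build a continuous-time (or discrete-time, heat-bath) Markov chain on the ordered pairs $\set{(\omega_1,\omega_2)\in\Omega^2 : \omega_1\leq\omega_2}$ with three properties: its first coordinate alone is a reversible chain with stationary law $\pi_1$, its second coordinate alone is a reversible chain with stationary law $\pi_2$, and the order $\omega_1\leq\omega_2$ is preserved. A stationary law $K$ of the joint chain then has marginals $\pi_1,\pi_2$ and $K(\omega_1\leq\omega_2)=1$, which is exactly the required coupling.

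The construction uses Glauber (heat-bath) updates driven by common randomness.
\begin{enumerate}
\item At each step, choose a site $z\in Z$ uniformly at random and an independent uniform $U\in[0,1]$.
\item Set $\omega_1(z)=1$ iff $U\leq\pi_1^{\omega_1}(z)$, and set $\omega_2(z)=1$ iff $U\leq\pi_2^{\omega_2}(z)$.
\item Leave all other coordinates unchanged.
\end{enumerate}
Each marginal update is the heat-bath kernel for the corresponding measure. Since $\pi_j$ is strictly positive, the one-point conditionals are well defined. Each kernel is reversible with respect to its $\pi_j$, because it resamples one coordinate from its exact conditional law. Each kernel is also irreducible: every flip has positive probability, so any configuration reaches any other. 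Hence each marginal chain has $\pi_j$ as its unique stationary law. Order preservation is where the hypothesis enters. If $\omega_1\leq\omega_2$ before the update, then the conditioning configurations satisfy $\omega_1|_{Z\smallsetminus z}\leq\omega_2|_{Z\smallsetminus z}$, so $\pi_1^{\omega_1}(z)\leq\pi_2^{\omega_2}(z)$, and hence the shared $U$ gives $\omega_1(z)\leq\omega_2(z)$ after the update.

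To conclude, start the joint chain at $(\mathbf 0,\mathbf 1)$, which is ordered. Take a subsequential limit of the Cesàro averages of its laws; this exists by compactness of the finite state space and is a stationary law $K$ supported on ordered pairs. Each marginal of $K$ is stationary for the corresponding single chain, so it equals $\pi_j$ by uniqueness. The main subtlety is this uniqueness step, which requires irreducibility; strict positivity of $\pi_j$ supplies it.

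For the converse as literally stated, I would not attempt a proof. Stochastic domination does not in general force the pointwise inequality of one-point conditionals for all pairs $\xi\leq\zeta$; I expect a small non-lattice example on two sites would show this. Since the paper only ever applies the sufficiency direction, verifying one-point conditional inequalities to deduce domination, I would flag that the equivalence should read as an implication, and prove only that implication as above.
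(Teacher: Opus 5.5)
Your proof of the sufficiency direction is correct and follows the standard Markov-chain coupling proof of Holley's theorem in~\cite{G06}, which the paper cites rather than proves. The textbook argument couples reversible dynamics under Holley's lattice condition $\pi_2(\omega\vee\omega')\pi_1(\omega\wedge\omega')\geq\pi_1(\omega)\pi_2(\omega')$, which for strictly positive measures is equivalent to your one-point hypothesis; your shared-uniform heat-bath update simply uses the one-point form directly. You are also right that the ``only if'' direction is false as stated, and the counterexample is simpler than you expected: on $Z=\set{a,b}$, writing configurations as $(\omega(a),\omega(b))$, take $\pi_1=\pi_2=\pi$ with $\pi(1,0)=\pi(0,1)=\tfrac{2}{5}$ and $\pi(0,0)=\pi(1,1)=\tfrac{1}{10}$. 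Then $\pi\leq_{st}\pi$ trivially, yet $\xi=(0,0)\leq\zeta=(0,1)$ gives $\pi^{\xi}(a)=\tfrac{4}{5}>\tfrac{1}{5}=\pi^{\zeta}(a)$.
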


\begin{prop}\label{prop:stochdom-support}
$\hat{\rho}$ is stochastically decreasing in $r$ for fixed $p_2,p_1$. If $r$ is allowed to vary while $\frac{p_1}{r(1-p_1)+p_1}$ and $\frac{p_2}{r(1-p_2)+p_2}$ are fixed then $\hat{\rho}$ is stochastically increasing in $r$. When $r$ is fixed, $\hat{\rho}$ is stochastically increasing in $p_2$ and $p_1$. 
\end{prop}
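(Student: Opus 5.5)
The plan is to apply Holley's theorem directly, with $Z=X^{\brac{i+1}}\sqcup X^{\brac{i}}$ and a configuration $\omega$ encoding the pair $(P_2,P_1)$. Everything rests on an explicit formula for the one-point conditional probabilities of $\hat{\rho}$. For each of the three claims I will check the Holley inequality $\pi_1^{\xi}(z)\le\pi_2^{\zeta}(z)$ for $\xi\le\zeta$. I expect this to go through in full when all parameter values involved satisfy $r\ge 1$. For comparisons within the range $r<1$ I do not yet have an argument, and I say below exactly where it breaks. All measures involved are strictly positive when $0<p_1,p_2<1$ and $r>0$; the boundary values of $p_1,p_2$ would be handled by a limiting argument.

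\emph{Step 1 (one-point conditionals).} Since $H^i(P_2,P_1)=Z^i(P_2,P_1)$, the space $Z^i(P_2,P_1)$ is the annihilator in $C^i(X;\Z_q)$ of the family $S(P_2,P_1)=\{\epsilon^*:\epsilon\in P_1\}\cup\{\delta^*\sigma:\sigma\in P_2\}$ of linear functionals. Writing $\mathrm{rk}$ for the rank of such a family, this gives
\[
b_i(P_2,P_1)=|X^{\brac{i}}|-\mathrm{rk}\,S(P_2,P_1).
\]
Adding one cell $z$ adjoins one functional, so $b_i$ drops by $\Delta_z(\xi)\in\{0,1\}$. Hence
\[
\hat{\rho}^{\xi}(z)=\frac{p}{p+(1-p)\,r^{\Delta_z(\xi)}},
\]
with $p=p_2$ for $z$ an $(i+1)$-cell and $p=p_1$ for $z$ an $i$-cell. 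In particular $\hat{\rho}^{\xi}(z)\in\{p,\ \tfrac{p}{p+r(1-p)}\}$. Moreover, the matroid rank $\mathrm{rk}$ is submodular, so $\Delta_z$ is non-increasing in $\xi$. This is the same mechanism as in Lemma~\ref{lem:homlattice}.

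\emph{Step 2 (the Holley comparisons).} The function $g(p,r,\Delta)=p/(p+(1-p)r^{\Delta})$ is increasing in $p$. It is decreasing in $r$ when $\Delta=1$ and constant when $\Delta=0$. For $r\ge1$, $g$ is also non-increasing in $\Delta$, so combined with Step 1, $\hat{\rho}^{\xi}(z)$ is non-decreasing in $\xi$.
\begin{itemize}
\item For stochastic decrease in $r$, take $r'\ge r\ge1$ and $\xi\le\zeta$. Then $g(p,r',\Delta(\xi))\le g(p,r,\Delta(\xi))\le g(p,r,\Delta(\zeta))$. Across the threshold $r\le1\le r'$, every conditional probability at $r'$ is at most $p$, and every one at $r$ is at least $p$, so the comparison is free.
\item For monotonicity in $p_1,p_2$ at fixed $r\ge1$, the same chain applies with $p$ in place of $r$.
\item For the second claim, set $p^\dagger=p/(r(1-p)+p)$ fixed. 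Then $p/(1-p)=r\,p^\dagger/(1-p^\dagger)$, and
\[
\hat{\rho}^{\xi}(z)=\frac{p^\dagger}{p^\dagger+(1-p^\dagger)r^{\Delta-1}}.
\]
This is constant in $r$ when $\Delta=1$ and increasing in $r$ when $\Delta=0$. The Holley inequality then again follows from monotonicity in $\xi$ for $r\ge1$, and from value ranges across the threshold: at $r\le1$ every value is at most $p^\dagger$, at $r'\ge1$ every value is at least $p^\dagger$.
\end{itemize}

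\emph{Step 3 (main obstacle: comparisons with both parameters below $1$).} When $r<1$, Step 1 shows that $\hat{\rho}^{\xi}(z)$ is non-\emph{increasing} in $\xi$, so Holley's hypothesis cannot be checked pointwise. For instance, with $r<r'<1$, the largest conditional probability at $r'$ can exceed the smallest at $r$. What I would try first is a matroid-duality rewrite. By the formula above, $b_i=|X^{\brac{i}}|-\mathrm{rk}\,S$, and $r^{b_i}$ turns the weight into a random-cluster weight on the dual matroid, evaluated on the complement of $S$. The aim is to reduce the $r<1$ comparisons to the $r\ge1$ case, or failing that to a sandwich through the threshold value $r=1$.

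I should state plainly that I do not see how to complete this case. A monotonicity statement for a weight $r^{\text{corank}}$ with $r<1$ looks like a negative-dependence property of matroid random-cluster measures, and I am not aware of such a result in this generality. If Step 3 cannot be carried out, the honest outcome is to prove the proposition in full for parameters with $r\ge1$ (and for any comparison straddling $r=1$), while flagging the sub-unit range as the gap.
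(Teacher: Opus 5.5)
Your proof is the paper's proof. Both use Holley's theorem together with the explicit one-point conditionals: the conditional is $p$ or $p/(p+r(1-p))$, according to whether adding the cell lowers $b_i$ by one. Both also need these conditionals to be monotone in the boundary condition. The paper gets that from positive association, i.e.\ the lattice condition of Lemma~\ref{lem:homlattice}, which rests on the same submodularity you invoke.

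The gap you flag in Step~3 is not resolved by the paper either. Its proof asserts $\pi_2^{\xi}(z)\le\pi_2^{\zeta}(z)$ ``by positive association''. But positive association of $\hat\rho$ is only claimed for $r\ge 1$ (``extends immediately to $\hat\rho$ when $r\geq 1$''). So the paper's argument silently carries the same restriction.

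For the first claim the paper's restriction is in fact slightly stronger than yours. There $\pi_2$ is the measure with the \emph{smaller} $r$, so the paper's chain needs both values to be at least $1$. Your straddling argument only needs the larger value to be at least $1$.

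The restriction is harmless for the paper's purposes. The proposition is only used in Theorem~\ref{thm:stochdom}, which compares $r=q$ with $r=1$. So you can regard your proof as complete wherever the larger value of $r$ is at least $1$ (and, for the third claim, for fixed $r\ge 1$). The cases with both values below $1$, or fixed $r<1$, are equally unproved in the paper; they are not a missing idea on your part.
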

\begin{proof}
If $\pi_1,\pi_2$ are both distributed as $\hat{\rho}$ for some choice of parameters, $z\in X^{\brac{i+1}}\cup X^{\brac{i}},$ and ${\zeta,\xi \in \{0,1\}^{X^{\brac{i+1}}}\times \{0\}^{X^{\brac{i+1}}}}$ with $\xi\leq \zeta$ then  
$$\pi_2^{\xi}(z)\leq \pi_2^\zeta(z)$$
by positive association. Consequently, the hypothesis for Holley's theorem simplifies to the claim that 
$$\pi_1^{\xi}(z)\leq \pi_2^{\xi}(z)$$
for all $z$ and $\xi.$ 

To establish this, there are two cases, depending on whether $z$ is an $i$-cell or $(i+1)$-cell. For clarity, we denote the one point conditional for an $i$-cell $\epsilon$ by $\hat{\rho}_i^{\xi}\paren{\epsilon}$ and the one point conditional for an $(i+1)$-cell $\sigma$ by $\hat{\rho}_{i+1}^{\xi}\paren{\sigma}.$ 

We can calculate these probabilities explicitly from the definition. To this end, let $S$ and $T$ be $i$-dimensional and $(i+1)$-dimensional percolation subcomplexes, let $\xi$ be the corresponding element of  $\{0,1\}^{X^{\brac{i+1}}}\times \{0,1\}^{X^{\brac{i}}},$ let $\epsilon$ be an $i$-cell, and let $\sigma$ be an $(i+1)$-cell. Then
\begin{align*}
\hat{\rho}_i^{\xi}\paren{\epsilon}&=\frac{1}{1+\frac{\hat{\rho}(S\smallsetminus \epsilon,T)}{\hat{\rho}(S\cup \epsilon,T)}} \quad \text{and} \quad
\hat{\rho}_{i+1}^{\xi}\paren{\sigma}=\frac{1}{1+\frac{\hat{\rho}(S,T\smallsetminus \sigma)}{\hat{\rho}(S,T\cup \sigma)}}.
\end{align*}
Since
\begin{align*}
\frac{\hat{\rho}(S\cup\epsilon,T)}{\hat{\rho}(S\smallsetminus \epsilon,T)}&=\frac{p_1}{1-p_1}r^{b_i(T,S\cup\epsilon)-b_i(T,S\smallsetminus \epsilon)}
\end{align*}
and
\begin{align*}
    \frac{\hat{\rho}(S,T\cup\sigma)}{\hat{\rho}(S,T\smallsetminus \sigma)}&=\frac{p_2}{1-p_2}r^{b_i(T\cup\sigma ,S)-b_i(T\cup \sigma,S)},
\end{align*}
the one-point conditionals are determined by $b_i(T,S\cup\epsilon;\,\Z_q)-b_i(T,S\smallsetminus \epsilon;\,\Z_q)$ and $b_i(T\cup\sigma,S;\,\Z_q))-b_i(T\smallsetminus\sigma,S;\,\Z_q)$.

Next, let $P$ and $Q$ be $(i+1)$- and $i$-dimensional percolation subcomplexes of $X,$ respectively. An element of $Z^i(P,Q;\,\Z_q)=H^i(P,Q;\,\Z_q)$ is an $i$-cochain ${f\in C^i(X;\,\Z_q)}$ so that $\delta f(\sigma)=0$ for all $\sigma\in P^{(i+1)}$ and $f\paren{\epsilon}=0$ for all $\epsilon \in Q.$ That is, the collection of cochains compatible with $\paren{P,Q}$ is a linear subspace of $C^i(X;\,\Z_q)$ determined by these equations. Adding a single $i$-cell to $Q$ or a single $(i+1)$-cell to $P$ adds a single linear equation and can thus either leave the dimension $b_i(P,Q;\,\Z_q)$ unchanged or decrease it by one. 
Therefore, $b_i(T,S\cup\epsilon;\,\Z_q)-b_i(T,S;\,\Z_q)$ and $b_i(T\cup\sigma,S;\,\Z_q)-b_i(T,S;\,\Z_q)$ are each either equal to $0$ or $-1$. 

From this, it follows that
$$
\hat{\rho}_i^{\xi}\paren{\epsilon}=\begin{cases}\frac{p_1}{r(1-p_1)+p_1}& \text{if } b_i(T,S\cup\epsilon;\,\Z_q)=b_i(T,S;\,\Z_q)-1\\
p_1 & \text{if } b_i(T,S\cup\epsilon;\,\Z_q)=b_i(T,S;\,\Z_q)
\end{cases}
$$
and
$$\hat{\rho}_{i+1}^{\xi}\paren{\sigma}=\begin{cases}\frac{p_2}{r(1-p_2)+p_2}& \text{if } b_i(T\cup\sigma ,S;\,\Z_q)=b_i(T,S;\,\Z_q)-1\\
p_2 & \text{if } b_i(T\cup\sigma ,S;\,\Z_q)=b_i(T,S;\,\Z_q).
\end{cases}$$
Both \( \hat{\rho}_i^{\xi}\paren{\epsilon}\) and \( \hat{\rho}_{i+1}^{\xi}\paren{\sigma} \) are monotone increasing in $p_2,p_1$ respectively, so if $p_2'\leq p_2$ and $p_1'\leq p_1$ then $\rho_{p_2',p_1'}\leq_{st}\rho_{p_2,p_1}$. Similarly,  they are decreasing in $r$, so if $r>r'$, then $\rho_{p_2,p_1,r}\leq_{st}\rho_{p_2,p_1,r'}$.  If we fix $c_1(p_1,r)=\frac{p_1}{r(1-p_1)+p_1}$ and $c_2(p_2,r)=\frac{p_2}{r(1-p_2)+p_2}$ while allowing $r$ to vary, $p_2$ and $p_1$ are both increasing in $r$, so if $r>r'$, then $\rho_{p_2,p_1,r}\geq_{st}\rho_{p_2,p_1,r'}$.

\end{proof}

\begin{proof}[\textbf{Proof of Theorem \ref{thm:stochdom}}]
The desired conclusion follows immediately from Proposition~\ref{prop:stochdom-support}, as $\rho_{p_2,p_1,q,i,X}\stackrel{d}{=}\hat{\rho}_{p_2,p_1,q,i,q,X}$ and $\hat{\rho}_{p_2,p_1,q,i,1}$ is distributed as a pair of independent Bernoulli plaquette percolations.

\end{proof}

\subsection{Duality}\label{sec:duality}

For the proof of Theorem~\ref{thm:duality}, we will need the following results from algebraic topology.  We recall some notation from the introduction. $\T=\T_N^d$ is the discrete torus of width $N.$ For a $j$-dimensional percolation subcomplex $P$ of $\T,$ $P^{\bullet}$ is the $(d-j)$-dimensional dual complex which contains a dual $l$-cell for each $\paren{d-l}$-cell not contained in $P.$


\begin{prop} \label{lem:bullet}
 Let \( P_1 \) and \( P_2 \) be percolation subcomplexes of $\T$ of dimension $i$ and $(i+1)$ respectively. Then
$$H^{j}(P_1^\bullet,P_2^\bullet)\cong H_{d-j}(P_2,P_1).$$
\end{prop}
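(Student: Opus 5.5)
The plan is to deduce the statement from Alexander duality on the torus, in the relative form stated in Section~\ref{sec:topos}:
$$
H^{j}(S\smallsetminus A,S\smallsetminus X;\,\Z_q)\cong H_{d-j}(X,A;\,\Z_q).
$$
Here $S=\T$ is the compact orientable $d$-manifold, $X=P_2$ and $A=P_1$. Both are closed subcomplexes of $\T$, and $P_1\subseteq P_2$ because $P_1\subseteq \T^{(i)}\subseteq P_2$. The whole task is then to replace the open sets $\T\smallsetminus P_1$ and $\T\smallsetminus P_2$ by the dual cell complexes $P_1^\bullet$ and $P_2^\bullet$ without changing relative cohomology.

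\textbf{Step 1 (the deformation retraction).} I would show that for any subcomplex $P$ of $\T$, the complement $\T\smallsetminus P$ deformation retracts onto the subcomplex of $(\T)^\bullet$ consisting of the dual cells $x'$ with $x\notin P$. For a percolation subcomplex this is exactly $P^\bullet$.
\begin{itemize}
\item This is the standard fact that the complement of a subcomplex in a cubical manifold retracts onto the dual complex spanned by the cells not in it.
\item Concretely, use the barycentric subdivision of $\T$. Every simplex is a chain $\tau_0<\tau_1<\dots$ of cubes. The dual cell $x'$ is the union of simplices whose chains start at $x$.
\item A point of $\T\smallsetminus P$ lies in a simplex whose chain contains some cubes not in $P$. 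Since $P$ is closed under taking faces, those cubes form an up-closed tail of the chain.
\item Linearly push the barycentric weight off the cubes that lie in $P$ onto that tail. This is a straight-line homotopy, and it is well defined and continuous across simplices.
\end{itemize}

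\textbf{Step 1 (the relative version).} The same retraction must be compatible with the pair. Since $P_1\subseteq P_2$, we have $\T\smallsetminus P_2\subseteq \T\smallsetminus P_1$ and $P_2^\bullet\subseteq P_1^\bullet$. The homotopy constructed on $\T\smallsetminus P_1$ should preserve $\T\smallsetminus P_2$. I would check this directly: a point outside $P_2$ has a chain with a tail outside $P_2$, so its image stays outside $P_2$. The homotopy therefore gives a homotopy equivalence of pairs
$$
(\T\smallsetminus P_1,\T\smallsetminus P_2)\simeq(P_1^\bullet,P_2^\bullet).
$$

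\textbf{Step 2 (invariance).} Relative cohomology is invariant under homotopy equivalences of pairs, so
$$
H^j(P_1^\bullet,P_2^\bullet)\cong H^j(\T\smallsetminus P_1,\T\smallsetminus P_2).
$$

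\textbf{Step 3 (duality).} Apply the Alexander duality isomorphism above to the right-hand side. This gives $H_{d-j}(P_2,P_1)$, which is the claim.

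If one prefers a purely combinatorial route, there is an alternative. The dual cochain complex of $(P_1^\bullet,P_2^\bullet)$ is identified with the chain complex of $(P_2,P_1)$ via $x\leftrightarrow x'$.
\begin{itemize}
\item Relative cochains on $(P_1^\bullet,P_2^\bullet)$ are functions on the dual cells $x'$ with $x\in P_2\smallsetminus P_1$ (in each dimension), which is exactly a basis of $C_{d-j}(P_2,P_1)$.
\item The dual coboundary is the transpose of the incidence relation between $x'$ and its cofaces. By Poincar\'e duality of the cubical lattice, with $\Z_q$ coefficients and signs fixed via the orientation of the torus, this matches $\partial$ up to sign.
\item This yields a chain isomorphism and hence the claim directly.
\end{itemize}

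The main obstacle I expect is the sign and orientation bookkeeping in this combinatorial version, or, in the topological version, the verification that the retraction respects the pair. I would try the combinatorial identification first, checking that $x'$ is a face of $y'$ exactly when $y$ is a face of $x$, with matching incidence numbers up to a global sign.
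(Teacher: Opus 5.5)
Your main route is the paper's route: retract the complements onto the dual complexes, then apply relative Alexander duality. The paper simply cites Lemma~7 of Duncan--Kahle--Schweinhart for the retraction, and your barycentric straight-line retraction of $\T\smallsetminus P$ onto $P^\bullet$ for a single subcomplex $P$ is a correct proof of it. The relative step, however, fails as you argue it.

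The homotopy $H$ on $\T\smallsetminus P_1$ does preserve $\T\smallsetminus P_2$, since the top cube of the carrying chain never changes. But that only gives $(\T\smallsetminus P_1,\T\smallsetminus P_2)\simeq(P_1^\bullet,P_1^\bullet\smallsetminus P_2)$. The endpoint $r_1=H(\cdot,1)$ does not map $\T\smallsetminus P_2$ into $P_2^\bullet$. For instance, take $P_1=\T^{(i-1)}$ and $P_2=\T^{(i)}$. Take a point on the open simplex of the chain $\epsilon<\sigma$, where the $i$-cell $\epsilon$ is a face of the $(i+1)$-cell $\sigma$. This point lies in the interior of $\sigma$, so it is outside $P_2$. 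It carries no weight on cells of $P_1$, so $r_1$ fixes it. And it lies in the relative interior of $\epsilon'$, which is not a cell of $P_2^\bullet$ because $\epsilon\in P_2$. So $r_1$ is not a map of pairs into $(P_1^\bullet,P_2^\bullet)$.

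The repair is not to ask for a homotopy equivalence of pairs at all. The inclusion $(P_1^\bullet,P_2^\bullet)\hookrightarrow(\T\smallsetminus P_1,\T\smallsetminus P_2)$ is a map of pairs that is a homotopy equivalence on each space separately. The five lemma, applied to the induced map of long exact sequences of the pairs, then gives $H^j(\T\smallsetminus P_1,\T\smallsetminus P_2)\cong H^j(P_1^\bullet,P_2^\bullet)$. This is what the paper's ``from this, it follows'' tacitly uses.

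Your combinatorial alternative is correct and is a genuinely different proof. Sending $x'^{*}\mapsto x$ for $x\in P_2\smallsetminus P_1$ identifies $C^j(P_1^\bullet,P_2^\bullet)$ with $C_{d-j}(P_2,P_1)$. The codimension-one cofaces of $x'$ in $P_1^\bullet$ are exactly the duals of the facets of $x$ not in $P_1$, so $\delta$ matches the relative boundary term by term.

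On signs, the incidence signs do not agree up to a single global sign as you conjecture. If you orient $x'$ by the complementary coordinate directions in increasing order, the sign relating $[y':x']$ to $[x:y]$ depends on which coordinate direction is dropped. Orient $x'$ instead so that the orientation of $x$ followed by that of $x'$ is the orientation of $\T$. Then the sign depends only on $\dim x$, which does not affect kernels and images.

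Compared with the paper, this route is self-contained. It needs neither the retraction lemma nor Alexander duality, so there is no five-lemma step. It also works with any coefficient group, not only the field $\Z_q$. The paper's route buys brevity and a picture in terms of complements, at the cost of delegating all the content to two cited results.
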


\begin{proof}
By Lemma 7 of~\cite{duncan2025homological}, the complements $\T\smallsetminus P_2$ and $\T\smallsetminus P_1$ deformation retract to $P_2^\bullet$ and $P_1^{\bullet}$ respectively. From this, it follows that 
$$H^{j}(P_1^\bullet, P_2^{\bullet})\cong H^{j}(\T\smallsetminus P_1,\T\smallsetminus P_2).$$
From relative Alexander duality (Equation~\ref{eqn:alexander}), we have 
$$H^{j}(\T\smallsetminus P_1,\T\smallsetminus P_2)\cong H_{d-j}(P_2,P_1).$$
Combining the above observations, the desired conclusion follows.
\end{proof}

\begin{lemma} \label{lem:ranks}
    Let \( P_1 \) and \( P_2 \) be percolation subcomplexes of $\T$ of dimension $i$ and $(i+1)$ respectively. Then
    $$
        b_{i+1}(P_2,P_1)=b_{i}(P_2,P_1)+|P_2|+|P_1|-|\T^{(i+1)}|.
    $$
\end{lemma}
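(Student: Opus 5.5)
The plan is to compute both Betti numbers directly from the relative chain complex of the pair $(P_2,P_1)$, which is very short. The identity should then be rank--nullity for a single linear map, that is, the Euler--Poincar\'e formula (Proposition~\ref{prop:euler}) in relative form. Homology and cohomology with $\Z_q$ coefficients have the same dimension (Definition~\ref{defn:betti}), so I can work with homology throughout.

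First I identify the relative chain groups $C_j(P_2,P_1)=C_j(P_2)/C_j(P_1)$.
\begin{itemize}
\item For $j\le i-1$ both complexes contain every $j$-cell of $\T$, so $C_j(P_2,P_1)=0$.
\item For $j=i$, the complex $P_2$ contains all $i$-cells and $P_1$ contains $|P_1|$ of them, so $C_i(P_2,P_1)\cong \Z_q^{|\T^{[i]}|-|P_1|}$.
\item For $j=i+1$, the complex $P_1$ has no $(i+1)$-cells, so $C_{i+1}(P_2,P_1)=C_{i+1}(P_2)\cong\Z_q^{|P_2|}$.
\item For $j\ge i+2$ the groups vanish.
\end{itemize}
The relative chain complex is therefore
$$0\to C_{i+1}(P_2)\xrightarrow{\ \bar\partial\ } C_i(\T)/C_i(P_1)\to 0,$$
where $\bar\partial$ is the boundary map followed by the quotient.

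Next I read off the Betti numbers. From this complex, $b_{i+1}(P_2,P_1)=\dim\ker\bar\partial$ and $b_i(P_2,P_1)=\dim\operatorname{coker}\bar\partial$. Writing $r=\rank\bar\partial$, rank--nullity gives
$$b_{i+1}=|P_2|-r \quad\text{and}\quad b_i=|\T^{[i]}|-|P_1|-r.$$
Subtracting the second equation from the first eliminates $r$ and yields
$$b_{i+1}(P_2,P_1)=b_i(P_2,P_1)+|P_2|+|P_1|-|\T^{[i]}|.$$
This is the claimed identity, provided the term $|\T^{(i+1)}|$ in the statement is read as the number of $i$-cells of $\T$, i.e.\ $|\T^{[i]}|$, which is the count this computation produces.

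I do not expect any real obstacle. The only care needed is bookkeeping: checking that the relative groups vanish outside degrees $i$ and $i+1$, which uses that both complexes contain the full $(i-1)$-skeleton, and pinning down the meaning of that notation. No topology of the torus is used, and the same argument works for any finite complex.
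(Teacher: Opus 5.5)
Your proof is correct, and so is your reading of the constant. The identity holds with $|\T^{[i]}|$, the number of $i$-cells. For instance, $P_2=P_1=\T^{(i)}$ has vanishing relative homology, which forces this constant. The number of $(i+1)$-cells would only agree with it when $d=2i+1$. The last line of the paper's own proof in fact produces $|\T^{(i)}|$, which is the number of $i$-cells in the paper's convention for percolation subcomplexes. So the $(i+1)$ in the statement is a typo. It is harmless where the lemma is used, since in the duality proof this constant only enters a normalization.

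Your route differs from the paper's in execution:
\begin{itemize}
\item The paper passes to the quotient space. It uses that $(P_2,P_1)$ is a good pair, so $H_j(P_2,P_1)\cong \tilde H_j(P_2/P_1)$. It then computes $\chi(P_2/P_1)$ twice: once from these Betti numbers, with the $+1$ correction for reduced $H_0$, and once from the cell structure via $\chi(P_2/P_1)=\chi(P_2)-\chi(P_1)+1$. Equating the two gives the identity.
\item You note that the relative chain complex $C_\ast(P_2)/C_\ast(P_1)$ is concentrated in degrees $i$ and $i+1$, and apply rank--nullity to the single map $\bar\partial$. This is the relative Euler--Poincar\'e formula proved directly.
\end{itemize}

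The underlying count is the same. Your version avoids quotient spaces, good pairs and reduced homology entirely. It also makes explicit that nothing about the torus is used: the identity holds for any finite cell complex, and the torus only matters for the Alexander duality step in the duality theorem. The paper's phrasing is more topological and matches the surrounding duality argument. For this lemma, though, your chain-level argument is shorter and more self-contained.
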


\begin{proof}
    Note first that since $P_2$ and $P_1$ coincide on the $(i-1)$-skeleton of $\T$ , we have $H_j(P_2,P_1)\cong 0$ for all $j\leq i-1.$ By Proposition~\ref{prop:euler}, we have
    $$\chi(Y)=\sum^d_{j=0}(-1)^j b_j(Y).$$
    Since  $P_2$ and $P_1$ form a good pair, it follows from~\cite[Proposition 2.22]{H02} that $${H_j(P_2,P_1)\cong \tilde{H}_j(P_2/P_1)},$$ where $X/A$ denotes the quotient space and the reduced homology $\tilde{H}_j(P_2/P_1)$ agrees with $H_j\paren{P_2/P_1}$ except for $j=0$ when $H_j\paren{P_2/P_1;\,G}\cong \tilde{H}_j(P_2/P_1)\oplus G.$
    Combining these facts, we obtain
    $$
        \chi(P_2/P_1)=1+\sum^d_{j=0}(-1)^jb_j(P_2,P_1)
        =
        (-1)^i\bigl(b_i(P_2,P_1)-b_{i+1}(P_2,P_1)\bigr)+1.
    $$
    However, the Euler characteristic of the quotient also satisfies the identity
    $$
        \chi\paren{P_2/P_1} = \chi(P_2)-\chi(P_1)+1
    $$
    since a cell complex structure for $P_2/P_1$ can be found by replacing all cells of $P_1$ with a single vertex. 
It thus follows that
    $$ 
        \chi(P_2/P_1)=1+\sum_{j=0}^d (-1)^j \bigl( P_2^{(j)}-P_1^{(j)} \bigr)
        =
        (-1)^i\bigl(|\T^{(i)}|-|P_2|-|P_1|\bigr)+1,
    $$
    where the numbers of cells in dimensions less than $i$ cancel. Hence 
    $$
        (-1)^i(b_i(P_2,P_1)-b_{i+1}(P_2,P_1))+1 = (-1)^i \bigl(|\T^{(i)}|-|P_2|-|P_1| \bigr)+1.
    $$
    This concludes the proof.
\end{proof}

\begin{proof}[\textbf{Proof of Theorem~\ref{thm:duality}}]
We will use the parameters $k_2,k_1$ rather than $p_2,p_1$ to simplify notation. To relate $\rho(P_2,P_1)$ and $\rho^\bullet(P_1^\bullet,P_2^\bullet)$, we will find a formula comparing the sizes of the relative cohomology groups $H^i(P_2,P_1;\,\Z_q)$ and $H^{d-i-1}(P_1^\bullet,P_2^\bullet;\,\Z_q)$.

    Recall that we are assuming that $q$ is prime, and thus $\Z_q$ is a field and 
    $$
        \abs{H_j(X,A;\,\Z_q)}= \abs{H^j(X,A;\,\Z_q)}=q^{b_j(X,A;\,\Z_q)}
    $$
    for all $j.$ Thus, by Proposition \ref{lem:bullet} and Lemma \ref{lem:ranks}, when ${d-i-1>0},$ we have
    \begin{align*}
        &|H^{d-i-1}(A^\bullet, X^\bullet)|
        =
        |\tilde H^{d-i-1}(A^\bullet, X^\bullet)|
        =
        |H_{i+1}(X,A)|
        =
        |H^{i+1}(X,A)|\\&\qquad=|H^{i}(X,A)|q^{|X|+|A|-|\T|^{(i+1)}+(-1)^i}.
    \end{align*}

Now we seek to find a choice of $k_2',k_1'$ so that $$\rho_{k_2,k_1,d}(P_2,P_1)=\rho _{k_2',k_1',d-i+1}(P_1^\bullet,P_2^\bullet).$$ Towards that end, we compute
    \begin{align*}
        & k_1^{|P_1|}k_2^{|P_2|}|H^i(P_2,P_1)|=k_1'^{|P_2^\bullet|}k_2'^{|P_1^\bullet|}|H^{d-i-1}(P_1^\bullet,P_2^\bullet)|\\
        &\qquad= k_1'^{|\T^{(i+1)}|-|P_2|}k_2'^{|\T^{(i)}|-|P_1|}|H^i(P_2,P_1)|q^{|P_2|+|P_1|-|\T^{(i+1)}|} \\
        &\qquad\propto k_1'^{-|P_2|}k_2'^{-|P_1|}|H^i(P_2,P_1)|q^{|P_2|+|P_1|}.
    \end{align*}
    To maintain proportionality, we have $k_1^{|P_1|}=k_2'^{-|P_1|} q^{|P_1|}$, so $k_2'=q/k_1$, and symmetrically $k_1'=q/k_2$ gives us an exact duality between $\rho_{k_2,k_1}(P_2,P_1)$ and $\rho_{k_2',k_1'}(P_1^\bullet,P_2^\bullet)$. If $d-i-1=0$ we get an extra constant factor of $q$, which doesn't affect the proportionality. This concludes the proof.
\end{proof}

\section{Applications}\label{sec:applications}
The coupling with the CPP leads to relatively simple, geometrically intuitive proofs of several standard results for the Potts lattice Higgs model, including the \( \mathbb{Z}_2 \) lattice Higgs model. Throughout this section, let $X$ be a finite cell complex, $q$ be a prime integer, $\beta_2,\beta_1\geq 0,$ and $\mu=\mu_{\beta_2,\beta_1,q,i,X}.$ 

\begin{prop}[Griffith's Second Inequality]
    Let $\gamma_1,\gamma_2\in C_i(X;\,\Z_q).$ Then
    $$
        \E_\mu(W_{\gamma_1} W_{\gamma_2})
        \geq
        \E_\mu(W_{\gamma_1}) \E_\mu(W_{\gamma_2}).
    $$
\end{prop}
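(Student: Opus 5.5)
The plan is to reduce the inequality to positive association of the CPP (Theorem~\ref{thm:positive}) through the topological identity of Theorem~\ref{thm:W=V}. The first observation is that $W_{\gamma_1}W_{\gamma_2}=\exp\bigl(\frac{2\pi i}{q}f(\gamma_1+\gamma_2)\bigr)=W_{\gamma_1+\gamma_2}$, since $f$ is linear on chains. Theorem~\ref{thm:W=V} then gives
\[
\E_\mu(W_{\gamma_1}W_{\gamma_2})=\rho\paren{V_{\gamma_1+\gamma_2}},\qquad \E_\mu(W_{\gamma_j})=\rho\paren{V_{\gamma_j}}\quad (j=1,2).
\]
In particular all three expectations are real and nonnegative, so the inequality is a genuine comparison of real numbers.

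The second step is the set-theoretic inclusion $V_{\gamma_1}\cap V_{\gamma_2}\subseteq V_{\gamma_1+\gamma_2}$. If $[\gamma_1]=0$ and $[\gamma_2]=0$ in $H_i(P_2,P_1;\Z_q)$, then $[\gamma_1+\gamma_2]=[\gamma_1]+[\gamma_2]=0$ because the homology class map is linear. Concretely, if $\gamma_j-\partial\tau_j$ is supported on $P_1$ for $j=1,2$, then $(\gamma_1+\gamma_2)-\partial(\tau_1+\tau_2)$ is also supported on $P_1$.

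The third step combines this inclusion with the remark following the definition of $V_\gamma$ that each $V_{\gamma_j}$ is an increasing event. Theorem~\ref{thm:positive} then yields
\[
\rho\paren{V_{\gamma_1+\gamma_2}}\geq \rho\paren{V_{\gamma_1}\cap V_{\gamma_2}}\geq \rho\paren{V_{\gamma_1}}\rho\paren{V_{\gamma_2}},
\]
and translating back through Theorem~\ref{thm:W=V} gives the claim.

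None of these steps is delicate. The main point to check is that Theorem~\ref{thm:positive} applies on the product space of pairs $(P_2,P_1)$ with the coordinatewise order, which is exactly the order under which $V_\gamma$ was observed to be increasing. I would also confirm that the parameters match: $\rho$ should be $\rho_{p_2,p_1,q,i,X}$ with $p_j=1-e^{-\beta_j}$, as required by Theorem~\ref{thm:W=V}.
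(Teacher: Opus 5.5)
Your proposal is correct and follows essentially the same route as the paper. It identifies the Wilson line expectations with $\rho(V_\gamma)$ via Theorem~\ref{thm:W=V}, then combines positive association (Theorem~\ref{thm:positive}) for the increasing events $V_{\gamma_1}, V_{\gamma_2}$ with the inclusion $V_{\gamma_1}\cap V_{\gamma_2}\subseteq V_{\gamma_1+\gamma_2}$ and the identity $W_{\gamma_1}W_{\gamma_2}=W_{\gamma_1+\gamma_2}$.
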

\begin{proof}
    By Theorem \ref{thm:W=V} and Theorem \ref{thm:positive},  we have 
    \begin{equation*}
        \E_\mu(W_{\gamma_1}) \E_\mu(W_{\gamma_2}) = \rho(V_{\gamma_1})\rho(V_{\gamma_2})\leq \rho(V_{\gamma_1}\cap V_{\gamma_2}).
    \end{equation*}
    Next, since $V_{\gamma_1}\cap V_{\gamma_2}\subset V_{\gamma_1+\gamma_2}$ and $W_{\gamma_1}W_{\gamma_2}=W_{\gamma_1+\gamma_2}$ it follows that 
    $$ \rho(V_{\gamma_1}\cap V_{\gamma_2})\leq  \rho(V_{\gamma_1+\gamma_2})=\E_\mu(W_{\gamma_1} W_{\gamma_2}),
    $$ 
    which implies the desired conclusion.
\end{proof}

\begin{prop}\label{prop: perimeter}
    Let $X$ be a subcomplex of $\Z^d$ or $\T_N^d$ and let $\gamma \in C_i\paren{X}.$ Denote the number of $i$-cells in the support of $\gamma$ by $|\gamma|.$ For any $\beta_2,\beta_1>0,$ there exist $c_1,c_2>0$ depending on $\beta_2,\beta_1,i$ and $d$ but not on $q, N,$ or $X$ so that
    $$
        e^{-c_1 |\gamma|}
        \leq \E_\mu (W_{\gamma})
        \leq e^{-c_2 |\gamma|}.
    $$
    \end{prop}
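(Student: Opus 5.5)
The plan is to prove both inequalities through Theorem~\ref{thm:W=V}, which gives $\E_\mu(W_\gamma)=\rho(V_\gamma)$, and then to bound $\rho(V_\gamma)$ using the comparison results for the CPP: Theorem~\ref{thm:positive} for the lower bound and Theorem~\ref{thm:stochdom} for the upper bound. Here $p_2=1-e^{-\beta_2}$ and $p_1=1-e^{-\beta_1}$, and both lie in $(0,1)$.

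\textbf{Upper bound.} The event $V_\gamma$ is increasing. By Theorem~\ref{thm:stochdom}, $\rho$ is stochastically dominated by independent Bernoulli percolation $\mathbb{P}$ with parameter $p_2$ on $(i+1)$-cells and $p_1$ on $i$-cells. Hence $\rho(V_\gamma)\le \mathbb{P}(V_\gamma)$, and this quantity no longer depends on $q$ except through the event.
\begin{itemize}
\item \emph{Local consequence of $V_\gamma$.} Suppose $\gamma-\partial\tau$ is supported on $P_1$ for some $\tau\in C_{i+1}(P_2)$. Take any $i$-cell $\epsilon$ with $\gamma(\epsilon)\neq 0$ and $\epsilon\notin P_1$. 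Then $(\partial\tau)(\epsilon)=\gamma(\epsilon)\neq0$, so some $(i+1)$-cell $\sigma$ having $\epsilon$ as a face lies in $P_2$. So $V_\gamma$ implies that every $\epsilon$ in the support of $\gamma$ satisfies the event $E_\epsilon$: either $\epsilon\in P_1$, or one of its at most $2(d-i)$ cofaces lies in $P_2$.
\item \emph{Independent subfamily.} Each $E_\epsilon$ depends only on $\epsilon$ and its cofaces. Each coface has $2(i+1)$ faces. A greedy selection therefore gives a subset $S$ of the support of $\gamma$ with $|S|\ge |\gamma|/(1+2(d-i)\cdot 2(i+1))$ whose coface sets are pairwise disjoint. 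The events $E_\epsilon$, $\epsilon\in S$, are then independent under $\mathbb{P}$.
\item \emph{Conclusion.} This yields
$$\mathbb{P}(V_\gamma)\le \bigl(1-(1-p_1)(1-p_2)^{2(d-i)}\bigr)^{|S|},$$
which is at most $e^{-c_2|\gamma|}$ with $c_2$ depending only on $\beta_2,\beta_1,i,d$. For subcomplexes $X$ of $\Z^d$ or $\T_N^d$, cells have even fewer cofaces, so the same bound holds.
\end{itemize}

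\textbf{Lower bound.} Let $A_\gamma=\bigcap_{\epsilon\in\operatorname{supp}\gamma}\{\epsilon\in P_1\}$. On $A_\gamma$ we can take $\tau=0$, so $A_\gamma\subseteq V_\gamma$. Each event $\{\epsilon\in P_1\}$ is increasing, so Theorem~\ref{thm:positive}, applied inductively, gives
$$\rho(V_\gamma)\ge\prod_\epsilon \rho(\epsilon\in P_1).$$
It remains to bound each one-point marginal $\rho(\epsilon\in P_1)$ from below, uniformly in the cell and the complex.
\begin{itemize}
\item \emph{Via stochastic domination.} The lower half of Theorem~\ref{thm:stochdom} gives $\rho(\epsilon\in P_1)\ge p_1/(q(1-p_1)+p_1)$. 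This already yields $e^{-c_1|\gamma|}$ with $c_1=\log\bigl((q(1-p_1)+p_1)/p_1\bigr)$, uniformly in $N$ and $X$.
\item \emph{Via the coupling.} To control the dependence on $q$, I would rewrite $\rho(\epsilon\in P_1)=p_1\,\mu(f(\epsilon)=0)$ using the third bullet of Theorem~\ref{thm:coupling}. I would then bound $\mu(f(\epsilon)=0)$ by conditioning on $f$ off $\epsilon$. The value $0$ carries weight at least $e^{\beta_1}$. The total weight of the other values is at most $(q-1)e^{2(d-i)\beta_2}$, since each coface is satisfied by at most one value of $f(\epsilon)$.
\end{itemize}

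\textbf{Main obstacle.} The hardest step is making $c_1$ independent of $q$. Both routes above lose a factor of order $1/q$ per cell. The case $\beta_2=0$ shows this is sharp: there $f$ is i.i.d., and $\E_\mu(W_\epsilon)=(e^{\beta_1}-1)/(e^{\beta_1}+q-1)$. So the argument delivers the lower bound with $c_1$ uniform in $N$ and $X$, with $q$ entering only through this single-cell factor. The upper bound is the part that is genuinely uniform in $q$.
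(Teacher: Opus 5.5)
This is essentially the paper's proof: $\E_\mu(W_\gamma)=\rho(V_\gamma)$, the lower bound from $A_\gamma\subseteq V_\gamma$ together with positive association and a one-cell marginal bound (the paper obtains $(p_1/q)^{|\gamma|}$, i.e.\ $c_1=\log(q/p_1)$, so its proof also makes $c_1$ depend on $q$), and the upper bound from the local condition at each cell of $\gamma$, a subfamily with pairwise disjoint coface sets, and domination by independent Bernoulli percolation. Your objection about $q$ is correct and does not need $\beta_2=0$: if $X$ is the closure of a single $i$-cell $\epsilon$ (so $X$ has no $(i+1)$-cells), then $\E_\mu(W_\epsilon)=(e^{\beta_1}-1)/(e^{\beta_1}+q-1)\to 0$ as $q\to\infty$, so the $q$-independent $c_1$ claimed in the statement cannot exist; separately, your greedy choice of $S$ is sounder than the paper's claim that the adjacency graph on $i$-cells is $2(i+1)$-colorable, which fails for $i=1$, $d\geq 5$ because the $d$ edges $[0,e_k]$ at a vertex pairwise share a plaquette.
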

\begin{proof}
    Suppose that $\gamma$ is supported on $e_1,\ldots,e_n$ and let Let $A_j$ be the event that $e_j\in P_1.$ Then $\cap_{j=1}^N A_j \subset V_{\gamma}$ so
    $$
        \E_\mu(W_{\gamma})
        =
        \rho\paren{V_{\gamma}}\geq \rho\paren{\cap_{i=1}^n A_i} > 
        \paren{\frac{p_1}{q}}^n=e^{-\log\paren{q/p_1}|\gamma|},
    $$
    where the final inequality follows by positive association.

On the other hand, for $V_{\gamma}$ to occur each $i$-cell $e_j$ must be either in $P_1$, or adjacent to an $(i+1)$-cell in $P_2$. This is less probable than the same requirement being satisfied on a subset of $\gamma$. The adjacency graph on $i$-cells which are connected by an edge if they are incident to an $(i+1)$-cell is $2(i+1)$-colorable, by alternating the colors assigned to parallel cells on adjacent $(i+1)$-cells, so we can always choose a set $\gamma'$ of size at least $\frac{N}{2(i+1)}$ so that no two $i$-cells in $\gamma'$ are adjacent to the same $(i+1)$-cell. Next, denote by $B_j$ the event that at least one of the $2(d-i)$ $(i+1)$-cells incident to $e_j$ is contained in $P_2$ and let
    $$
        c_2 =
        -\frac{\log({1-\paren{1-p_2}^{2(d-i)}\paren{1-p_1}}}{2(i+1)}.
    $$
    Then
    $$
        \rho\paren{V_\gamma} 
        \leq 
        \rho\paren{\cap_{i=1}^n B_i\cup A_i}
        \leq 
        \prod_{i=1}^{n/(2i+2)} \Bigl({1-\paren{1-p_2}^{2(d-i)}\paren{1-p_1}} \Bigr)
        =
        e^{-c_2 |\gamma|} ,
    $$
    where we used Theorem~\ref{thm:stochdom} to compare \( \rho \) to independent plaquette percolation. 
\end{proof}

\begin{prop} Let $\gamma\in C_i(X;\,\Z_q).$ Then $\E_{\mu_{\beta_2,\beta_1,q,i,X}}(W_\gamma)$ is increasing in ${\beta_2}$ and ${\beta_1}$.
\end{prop}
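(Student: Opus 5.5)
The plan is to reduce the statement to the stochastic monotonicity of the CPP in its parameters, via the topological representation of Wilson line expectations. By Theorem~\ref{thm:W=V}, for every $\beta_2,\beta_1\geq 0$ we have
$$
    \E_{\mu_{\beta_2,\beta_1,q,i,X}}(W_\gamma)=\rho_{p_2,p_1,q,i,X}(V_\gamma),
$$
where $p_2=1-e^{-\beta_2}$ and $p_1=1-e^{-\beta_1}$. In particular, the Wilson line expectation is real and lies in $[0,1]$. The maps $\beta_2\mapsto p_2$ and $\beta_1\mapsto p_1$ are strictly increasing. It therefore suffices to show that $\rho_{p_2,p_1,q,i,X}(V_\gamma)$ is nondecreasing in $p_2$ for fixed $p_1$, and in $p_1$ for fixed $p_2$.

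For this, I will invoke the last assertion of Theorem~\ref{thm:stochdom}: for fixed $q$, $\rho$ is stochastically increasing in $p_2$ and in $p_1$. Concretely, suppose $p_2'\leq p_2$ and $p_1'\leq p_1$. Then there is a coupling of $(P_2',P_1')\sim\rho_{p_2',p_1'}$ and $(P_2,P_1)\sim\rho_{p_2,p_1}$ with $P_2'\subseteq P_2$ and $P_1'\subseteq P_1$ almost surely.

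It remains to check that $V_\gamma$ is an increasing event, as noted after its definition. If $\gamma-\partial\tau$ is supported on $P_1'$ for some $\tau\in C_{i+1}(P_2')$, then $\tau\in C_{i+1}(P_2)$ as well. Moreover, $\gamma-\partial\tau$ is supported on $P_1\supseteq P_1'$. Hence $V_\gamma$ for $(P_2',P_1')$ implies $V_\gamma$ for $(P_2,P_1)$. Taking probabilities under the coupling gives $\rho_{p_2',p_1'}(V_\gamma)\leq\rho_{p_2,p_1}(V_\gamma)$, which proves the claim.

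The only step with real content is the stochastic monotonicity. It is already established in Proposition~\ref{prop:stochdom-support} via Holley's theorem and the computation of the one-point conditionals, so there is no genuine obstacle here. One point needs care: Holley's theorem as stated requires strictly positive measures. This holds for $p_2,p_1\in(0,1)$, since every pair $(P_2,P_1)$ has $|H^i(P_2,P_1)|\geq 1$. The boundary values $\beta=0$, where $p=0$, would then be handled by continuity of $\rho_{p_2,p_1}(V_\gamma)$ in $(p_2,p_1)$. This continuity holds because the measure is a finite ratio of polynomials with positive denominator.
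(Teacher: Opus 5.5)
Your proof is correct and follows the same route as the paper: you combine Theorem~\ref{thm:W=V} with the stochastic monotonicity of $\rho$ in $(p_2,p_1)$ from Theorem~\ref{thm:stochdom}, the fact that $V_\gamma$ is an increasing event, and the monotonicity of $\beta\mapsto 1-e^{-\beta}$. Your remarks on strict positivity for Holley's theorem and on continuity at $\beta=0$ correctly fill in details the paper leaves implicit.
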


\begin{proof}
    By Theorem~\ref{thm:stochdom}, $\rho_{p_2,p_1}$ is stochastically increasing in $p_2$ and $p_1$. The event $V_\gamma$ is increasing, so $\rho_{p_2,p_1}(V_\gamma)$ is increasing in $p_1$ and $p_2$. By Theorem \ref{thm:W=V} and the fact that $p_2$ and $p_1$ are increasing in ${\beta_2}$ and ${\beta_1}$ respectively,  it follows tat $\E_{\mu({\beta_2},{\beta_1})}(W_\gamma)$ is increasing in ${\beta_2}$ and ${\beta_1}$.
\end{proof}

Before showing that the Marcu-Fredenhagen Ratio exhibits a phase transition for $i=1,$ we prove that its natural analogue for $i\geq 2$ has trivial behavior. 

\begin{prop}\label{prop: MF for i geq 2}
Let $i\geq 2$ be fixed, let $n\in 2\N,$ let $q_n=\partial \bigl(\brac{0,n}^{i+1}\times\set{0}^{d-i-1})$ and let $q_n'$ and $\gamma_n''$ be the chains formed by the upper and lower halves of $q_n$ (see Figure~\ref{figure: Wilson lines MF}).
$$
    R\paren{\beta_2,\beta_1,n}
    =
    \frac{\mathbb{E}_{\mu}(W_{q_n'})\mathbb{E}_{\mu}(W_{q_n''})}{\mathbb{E}_{\mu}(W_{q_n})}=\frac{\mathbb{E}_{\mu}(W_{q_n'})^2}{\mathbb{E}_{\mu}(W_{q_n})}.$$
Then 
    \[
    \lim_{n \to \infty } \hat R(p_2,p_1,n) =0.
    \]
\end{prop}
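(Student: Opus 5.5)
The plan is to compare the numerator and denominator through the CPP, using the topological ratio $\hat R$ (equal to $R$ by Theorem~\ref{thm:W=V}). The key point is that for $i\ge 2$ the two halves $q_n'$ and $q_n''$ have a common boundary $r_n:=\partial q_n'=-\partial q_n''$. This is an $(i-1)$-sphere containing of order $n^{i-1}$ cells, and $n^{i-1}\to\infty$. (For $i=1$ the rim is just two points, which is why that case behaves differently.) The aim is to show
$$\rho(V_{q_n'})^2\le \rho(V_{q_n})\,e^{-c n^{i-1}}$$
for some $c=c(p_2,p_1,i,d)>0$, which immediately gives $\hat R(p_2,p_1,n)\to 0$.

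\textbf{Step 1 (topological reduction).} Suppose $V_{q_n'}$ occurs, so $q_n'=s+\partial\tau$ with $s\in C_i(P_1)$ and $\tau\in C_{i+1}(P_2)$. Taking boundaries gives $\partial s=r_n$. Hence $r_n$ is a boundary in $P_1$, which forces every $(i-1)$-cell of $r_n$ to be a face of some $i$-cell of $P_1$. Call this event $E_n$; it is increasing and local along the rim. Also $V_{q_n'}\cap V_{q_n''}\subset V_{q_n}$, because the relations add. Positive association (Theorem~\ref{thm:positive}) therefore gives
$$\rho(V_{q_n'})^2\le \rho(V_{q_n'}\cap V_{q_n''})\le \rho(V_{q_n}\cap E_n).$$
It remains to prove $\rho(E_n\cap V_{q_n})\le e^{-cn^{i-1}}\rho(V_{q_n})$.

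\textbf{Step 2 (separated rim cells).} As in the proof of Proposition~\ref{prop: perimeter}, I select a subfamily of at least $c'n^{i-1}$ cells of $r_n$ whose closed neighbourhoods are pairwise disjoint. For each selected cell $e$, let $F_e$ be the event that some $i$-cell of $P_1$ contains $e$; then $E_n\subset\bigcap_e F_e$.

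\textbf{Step 3 (main obstacle).} The difficulty is that conditioning on the increasing event $V_{q_n}$ pushes $P_1$ upward, so Theorem~\ref{thm:stochdom} cannot be applied directly. My approach is a local surgery (a multi-valued map or finite-energy argument). For each selected $e$ on which $F_e$ holds, I modify the configuration near $e$ in two ways:
\begin{itemize}
\item close the boundedly many $P_1$-cells containing $e$;
\item open the boundedly many $(i+1)$-cells of the solid cube $[0,n]^{i+1}\times\{0\}^{d-i-1}$ adjacent to $e$.
\end{itemize}
I then need to check that $V_{q_n}$ survives. Any representation $q_n=s+\partial\tau$ only uses $P_1$-cells near $e$ through $s$, and the part of $s$ near $e$ can be rerouted as the boundary of the newly opened cube cells. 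This rerouting requires that the modified cells lie on the $q_n$ side; near the rim the cube's $(i+1)$-cells do fill the region, so this should work.

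\textbf{Step 4 (counting).} Each local surgery changes the $\rho$-weight by at most a bounded factor. This follows from the explicit one-point conditionals computed in the proof of Proposition~\ref{prop:stochdom-support}: the change in $b_i$ per cell is $0$ or $-1$, so each flip costs a factor in $[\min(k_1,k_2)/q,\ \max(k_1,k_2,1)]$. The surgery at the selected cells can be done independently, and each surgered configuration has bounded multiplicity of preimages. Moreover, configurations in which $F_e$ fails at a positive fraction of the selected cells gain an entropy factor. The standard Peierls-type counting then gives $\rho(V_{q_n}\cap E_n)\le e^{-cn^{i-1}}\rho(V_{q_n})$, which completes the proof.
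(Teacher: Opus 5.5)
Your Steps 1--2 are the same as the paper's reduction. Positive association gives $\hat R\le\rho(V_{q_n'}\mid V_{q_n})$, and $V_{q_n'}$ forces an $i$-cell of $P_1$ through every $(i-1)$-cell of the rim $\partial q_n'$, which has order $n^{i-1}$ cells. A finite-energy or multi-valued-map count as in Step 4 is also how the argument must end.

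\textbf{The gap is the surgery in Step 3.} Closing the $P_1$-cells through a rim cell $e$ and opening only the solid cube's $(i+1)$-cells containing $e$ does not keep the configuration in $V_{q_n}$. So Step 4 has no valid map into $V_{q_n}$ to count with. Take a generic rim cell $e\subset\{x_1=0,\,x_{i+1}=n/2\}$. The only such cube cells are $C_\pm=e\times[0,1]_{x_1}\times[n/2,n/2\pm1]_{x_{i+1}}$.
\begin{itemize}
\item If the witness $s$ uses a $P_1$-cell through $e$ that leaves the cube, e.g.\ $e\times[-1,0]_{x_1}$ or $e\times[0,1]_{x_j}$ with $j\ge i+2$, no opened cell even contains it.
\item Even in the most favourable case the rerouting fails. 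Let $P_1$ consist of the $i$-cells of $q_n$ and $P_2=\emptyset$, so the configuration lies in $E_n\cap V_{q_n}$ with $s=q_n$. After your surgery, any $\tau$ that replaces the closed cell $e\times\{0\}\times[n/2,n/2+1]$ must give $C_+$ a nonzero coefficient. Then $\partial\tau$ contains the faces $e\times\{1\}\times[n/2,n/2+1]$, $e\times[0,1]\times\{n/2+1\}$ and $\partial e\times[0,1]\times[n/2,n/2+1]$. These lie neither in $P_1$ nor in $q_n$, and no other open cell can cancel them, so $V_{q_n}$ fails.
\end{itemize}
Your phrase ``this should work'' covers exactly the step that is false as stated.

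\textbf{What is missing is a local modification large enough to reroute any witness; this is the paper's event.} Let $T_{e,i+1}$ be all $(i+1)$-cells containing $e$, in all $d-i+1$ normal directions rather than only the cube's. Let $\partial T_{e,i+1}$ be the $i$-faces of these cells that do not contain $e$. Require $T_{e,i+1}\subseteq P_2$ and $P_1\cap T_{e,i}=\partial T_{e,i+1}$, i.e.\ every $i$-cell through $e$ closed and every outer face open.

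Rerouting then uses a fact you never invoke: $\partial s=\partial q_n=0$.
\begin{itemize}
\item The part $\sum_r a_r\,(e\times r)$ of $s$ on the $i$-cells through $e$, indexed by rays $r$ of the normal space, therefore has $\sum_r a_r=0$.
\item So it is a combination of differences $e\times r-e\times r'$.
\item Each difference equals the boundary of one or two star cells plus a chain on $\partial T_{e,i+1}$; two are needed for opposite rays, which is possible since $d-i+1\ge 2$.
\end{itemize}
Hence, given the outside configuration, this local configuration lies in $V_{q_n}$ whenever any local configuration does. Finite energy then gives it conditional probability at least some $p>0$, uniformly in the outside configuration and in $n$. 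On this event $V_{q_n'}$ fails at $e$, so $\rho(V_{q_n'}\mid V_{q_n})\le(1-p)^{|E_n|}$ with $|E_n|\asymp n^{i-1}$. Your multi-valued-map counting goes through unchanged once this modification replaces yours.
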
 

\begin{proof}
    By Theorem~\ref{thm:positive}, we have
	\begin{align*}
		& R(p_2,p_1,n) = \frac{\rho(  V_{q_n'}) \rho( V_{q_n''})}{\rho (V_{q_n'+q_n''})}
        =
        \frac{\rho^2(P \in V_{q_n'},\, Q \in V_{q_n''})}{\rho^2(P \in V_{q_n'+q_n''})}
        \leq
        \frac{\rho(P \in V_{q_n'} \cap  V_{q_n''})}{\rho(P \in V_{q_n'+q_n''})}
        \\&\qquad =
        \rho(P \in V_{q_n'} \mid P \in V_{q_n'+q_n''}).
	\end{align*}
    Let \( \gamma_n = \partial q_n' \) the boundary between \( q_n' \) and \( q_n''. \) On the event \( P \in V_{q_n'} \cap  V_{q_n''},\) for each $(i-1)$-cell \( \sigma \in \support \gamma_n, \) there must exist at least one i-cell \(\tau \in  P \) incident to $\sigma.$  We now sample \( P \sim \rho(\cdot \mid P  \in V_{q_n})\) as follows. First, let \( P_0 \sim \rho(\cdot \mid P_0 \in V_{q_n})\). For \( \sigma \in \support\gamma_n\) let  $T_{\sigma,i+1}$ be the collection of $(i+1)$-cells in the star of $\sigma$ (that is, the collection of $(i+1)$-cells containing $\sigma$ as a face of co-dimension two) and let $T_{\sigma,i}$ be the collection of $i$-cells incident to an $(i+1)$-cell in $T_{\sigma,i+1}.$ 
    We~abuse notation and denote by $\partial T_{\sigma,i+1}\subset T_{\sigma,i}$ the $i$-cells incident to exactly one $(i+1)$-cell of~$T_{\sigma,i+1}.$ 
    
    Let \( E_n \subseteq \support \gamma_n\) be a collection of $i$-cells such that for any \( \{ \sigma,\sigma' \} \subseteq E_n,\) the sets \( T_{\sigma,j}\) and  \( T_{\sigma,j}\) are disjoint for all \( j \in \{ i,i+1 \}.\) Now obtain \(P\) from \( P_0\) by resampling from \( \rho(\cdot \mid V_{q_n'+q_n''} )\) on the sets \( T_{\sigma,{i}}\) and  \( T_{\sigma,{i+1}}\) for all \( \sigma \in E_n.\) Then, for each \( \sigma \in E_n,\) with strictly positive probability \( p \) (not depending on \( n \)), we have \( T_{\sigma,i+1} \subseteq P_2\) and \( P_1 \cap T_{\sigma,i} = \partial R_{\sigma,i+1} \) However, on this event, we cannot have \( P \in V_{q_n'}.\) Hence
    \[
     R(p_2,p_1,n) \leq (1-p)^{|E|}.
    \]
    Now note that since \( i \geq 2, \) we have \( \lim_{n \to \infty} |\gamma_n| = \infty\), and we can thus choose \( E_n\) such that \( \lim_{n \to \infty}|E| = \infty.\) This concludes the proof.
\end{proof}

\section{Phase Transition of the Marcu--Fredenhagen Ratio}
\label{sec:ratio}
For an $i$-chain $\gamma$ denote by $|\gamma|$ the number of $i$-cells in the support of $\gamma.$ This is sometimes called the perimeter of $\gamma.$ Recall that we let $n\in 2\N,$ $\gamma_n=\partial (\brac{0,n}^2\times \set{0}^{d-2}$, and let $\gamma_n'$ and $\gamma_n''$ be the paths formed by the upper and lower halves of $\gamma_n$ (see Figure~\ref{figure: Wilson lines MF}). 
Recall also the definitions of the Marcu--Fredenhagen ratio \( R(\beta_2,\beta_1,n) \) and the topological Marcu--Fredenhagen ratio \( \hat R(p_2,p_1,n)\) from Definition~\ref{def: MF} and Definition~\ref{def: MF top} respectively, and note that by Theorem~\ref{thm:W=V}, they are equal.

Fix $i=1,$ $d$, and $q,$ let $\Lambda_N=[-N,N]^d,$ and let $\rho=\rho_{p_2,p_1}=\rho_{p_2,p_1,q}=\rho_{p_2,p_1,q,1,\Z^d}$ be the weak limit of the measures  $\rho_{p_2,p_1,q,1,\Lambda_N^d} ,$ which exists by a standard stochastic domination argument. Recall that the area of a cycle $\gamma\in B_i(A;\,\Z_q)$ is the minimum number of plaquettes in the support of a $(i+1)$-chain $\tau$ so that $\partial \tau=\gamma.$  An central result which will be useful in this section is the following theorem, which states that pure Potts lattice gauge theory has a phase transition between a region with perimeter law and area law.
We note that there was a typo in the original statement of the theorem.

\begin{theorem}[Theorem~7 of~\cite{duncan2025topological}]
    \label{thm:areaperimeter}
    Let $q$ be a prime integer and $i<d.$ Consider the plaquette random cluster model $\hat{\rho}\coloneqq \hat{\rho}_{p,q,i,\Z^d}=\rho_{p_2,0,q,i,\Z^d}.$  There exist positive, finite constants $c_1=c_1(p_2,q,i,d),c_2=c_2(p_2,q,i,d)$ and $0<p'=p'\paren{q,i,d}\leq p''=p''\paren{q,i,d}<1$ so that, for hyperrectangular $(i-1)$-boundaries $\gamma$ in $\Z^d,$
    \begin{equation}
    \label{eq:wilsoninequalities1}
        \exp(-c_1 \mathrm{Area}(\gamma)) 
        \leq 
        \hat{\rho}(V_\gamma) 
        \leq  
        \exp(-c_2 |\gamma|)
    \end{equation}
    for all $p \in \paren{0,1}$, and such that
    \begin{align*}
        \begin{cases}
            -\frac{\log\paren{ \hat{\rho}(V_\gamma)}}{|\gamma|)}\;\,=\;\, \Theta\paren{1} & \text{if } p > p''
            \cr
            -\frac{\log\paren{ \rho(V_\gamma)}}{\mathrm{Area}(\gamma)}\;\, \rightarrow \;\, c_1 & \text{if } p<p'.
        \end{cases}
    \end{align*}
\end{theorem}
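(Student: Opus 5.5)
The statement is quoted from \cite{duncan2025topological}, so in the paper it is used as a black box. If I had to reprove it with the tools above, I would work entirely with the plaquette random cluster model $\hat{\rho}=\rho_{p,0,q,i,\Z^d}$ and the event $V_\gamma$, where $\gamma$ is a hyperrectangular $(i-1)$-boundary. The plan has four steps: two soft bounds valid for every $p$, then an area law for small $p$, then a perimeter law for large $p$.

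\textbf{The two soft bounds (all $p$).} Fix a minimal $i$-chain $\tau$ with $\partial\tau=\gamma$; it has $\mathrm{Area}(\gamma)$ plaquettes. If every plaquette of $\tau$ is open, then $V_\gamma$ occurs. Each one-plaquette event is increasing, so Theorem~\ref{thm:positive} together with the lower stochastic domination in Theorem~\ref{thm:stochdom} gives
\[
\hat{\rho}(V_\gamma)\geq \paren{\tfrac{p}{q(1-p)+p}}^{\mathrm{Area}(\gamma)} .
\]
This is the lower bound in \eqref{eq:wilsoninequalities1}. For the perimeter upper bound I would copy the proof of Proposition~\ref{prop: perimeter}. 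Every $(i-1)$-cell of $\gamma$ must lie in some open plaquette. Pass to a sub-collection of cells of $\gamma$ whose stars are pairwise disjoint; this keeps a constant fraction of $|\gamma|$. Then dominate $\hat{\rho}$ by Bernoulli percolation using Theorem~\ref{thm:stochdom}, under which these local events are independent.

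\textbf{Area law for $p<p'$.} Again dominate by Bernoulli($p$) percolation. On $V_\gamma$, the open plaquettes contain an $i$-chain $\tau$ with $\partial\tau=\gamma$, and we may take $\tau$ connected through shared $(i-1)$-faces and attached to $\gamma$. Such a $\tau$ has at least $\mathrm{Area}(\gamma)$ plaquettes. The number of connected plaquette sets of size $m$ that meet $\gamma$ is at most $|\gamma|C^m$. A Peierls sum then gives
\[
\hat{\rho}(V_\gamma)\leq |\gamma|\sum_{m\geq \mathrm{Area}(\gamma)}(Cp)^m\leq e^{-c\,\mathrm{Area}(\gamma)}
\]
for $p$ small. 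To upgrade this to convergence of $-\log\hat{\rho}(V_\gamma)/\mathrm{Area}(\gamma)$, I would glue rectangles. If $\gamma=\gamma_a+\gamma_b$ splits a rectangle into two sub-rectangles, then $V_{\gamma_a}\cap V_{\gamma_b}\subset V_\gamma$. By FKG, $-\log\hat{\rho}(V_\gamma)$ is subadditive up to boundary corrections of perimeter order. A multi-parameter Fekete argument then gives a limit per unit area, and the perimeter error is negligible compared with the area.

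\textbf{Perimeter law for $p>p''$ (main obstacle).} The upper bound is already done, so it remains to show $\hat{\rho}(V_\gamma)\geq e^{-c|\gamma|}$. I would use duality, Theorem~\ref{thm:duality} with $P_1$ trivial, on a torus large compared with $\gamma$. At high $p$ the dual $(d-i)$-dimensional complex is a random cluster model at small parameter, hence subcritical: large dual cycles are exponentially unlikely in their size. By Proposition~\ref{lem:bullet} and Alexander duality, the failure of $[\gamma]=0$ is witnessed by a dual $(d-i)$-cycle that links $\gamma$. Such a cycle either stays within distance $O(1)$ of $\gamma$, or it is large and must be long enough to link a big rectangle. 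I would force $V_\gamma$ by intersecting two events. The first is a local event: all plaquettes in a thin tubular neighbourhood of $\gamma$ are open, which closes every nearby dual cell and has probability at least $e^{-c|\gamma|}$ by FKG. The second is the event that there is no large dual cycle linking $\gamma$; its probability is bounded below uniformly by a Peierls bound on the subcritical dual. Combining the two by FKG, since both are increasing for the primal, gives the bound.

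The delicate points are all in this last step. One must check that the only obstructions to $[\gamma]=0$ are dual cycles linking $\gamma$; for $\Z_q$ coefficients this needs the linking pairing from relative Alexander duality. One must handle the torus's global homology, for example by taking $N\gg n$ and passing to the weak limit. And one must check that the Peierls sum over linking dual cycles is uniform in the size of $\gamma$.
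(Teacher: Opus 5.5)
The paper gives no proof of this statement; it is imported from \cite{duncan2025topological}, so your argument is judged on its own. Your two soft bounds, the Peierls area law and the subadditivity step are fine. The gluing $V_{\gamma_a}\cap V_{\gamma_b}\subseteq V_{\gamma_a+\gamma_b}$ is exact, so there is not even a perimeter correction.

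\textbf{The gap.} It lies in the lower bound $\hat{\rho}(V_\gamma)\geq e^{-C|\gamma|}$ for $p>p''$. Your dichotomy is false: you claim a dual cycle linking $\gamma$ either stays within distance $O(1)$ of $\gamma$ or is large enough to link the whole rectangle. In fact a linking $(d-i)$-cycle only has to pierce the spanning box $\tau$ once and wind around the nearby piece of $\gamma$. If it pierces at distance $s$ from $\gamma$, it needs diameter only of order $s$. Such cycles live at every intermediate scale and at order $|\gamma|$ places along $\gamma$.

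After opening a tube of fixed width $w$, the union bound over surviving linking cycles is therefore of order $|\gamma|e^{-cw}$. This is useless once $|\gamma|\gg e^{cw}$, which happens for $i\geq 2$. Worse, the second event is genuinely not uniformly likely. The one-point conditionals lie in $[p/(q(1-p)+p),p]$, so by finite energy, at $\asymp|\gamma|/w^{i-1}$ well-separated places a small dual cycle encircling the tube appears with conditional probability bounded below. Hence the probability that none appears decays exponentially in $|\gamma|$, whatever fixed threshold separates ``near'' from ``large''. Letting the tube width grow like $\log|\gamma|$ to beat the union bound costs $\exp(-c|\gamma|(\log|\gamma|)^{d-i+1})$, which destroys the perimeter law.

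\textbf{The fix.} The missing idea is to multiply local estimates using positive association, instead of separating scales with one union bound.

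It is also cleaner to drop Theorem~\ref{thm:duality} and the torus. Since $V_\gamma$ is increasing, Theorem~\ref{thm:stochdom} gives $\hat{\rho}(V_\gamma)\geq\mathbb{P}_{p_*}(V_\gamma)$ for Bernoulli plaquette percolation with $p_*=p/(q(1-p)+p)$. Its dual is Bernoulli with the small parameter $1-p_*$. Your torus route has a further problem. Here $\hat{\rho}$ is the free-boundary limit, and restrictions of torus measures stochastically dominate free box measures, since conditioning on all plaquettes outside the box being closed yields the free measure. So lower bounds for $V_\gamma$ on tori do not pass to $\hat{\rho}$ as $N\to\infty$.

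The product argument runs as follows.
\begin{itemize}
\item For each plaquette $\sigma$ of $\tau$, let $A_\sigma$ be the event that the dual cluster of the dual cell of $\sigma$ supports no cycle linking $\gamma$. Each $A_\sigma$ is increasing in the primal configuration.
\item $V_\gamma\supseteq\bigcap_\sigma A_\sigma$. Indeed, some cluster-restriction of a linking cycle is itself linking. Its linking number equals its intersection number with $\tau$, so it passes through the dual of a plaquette of $\tau$.
\item Harris' inequality gives $\mathbb{P}_{p_*}(V_\gamma)\geq\prod_\sigma\mathbb{P}_{p_*}(A_\sigma)$.
\item $\mathbb{P}_{p_*}(A_\sigma)\geq p_*$, by taking $\sigma$ open.
\item $\mathbb{P}_{p_*}(A_\sigma)\geq 1-e^{-c\,\mathrm{dist}(\sigma,\gamma)}$ by a Peierls bound. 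A dual cluster containing the dual of $\sigma$ and a cycle linking $\gamma$ cannot lie in a ball around $\sigma$ missing $\gamma$, so its diameter is at least of order $\mathrm{dist}(\sigma,\gamma)$.
\item A hyperrectangle has at most $|\gamma|$ plaquettes at each distance from its boundary, so the product is at least $e^{-C|\gamma|}$.
\end{itemize}
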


\begin{prop}\label{prop:c3}
    Assume the same notation as in the previous theorem. If $i=1$ then
    $$
        \lim_{p\to 0} c_1(p,q,1,d)=\infty.
    $$
\end{prop}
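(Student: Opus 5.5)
We need to show that the area-law constant blows up as $p\to0$ when $i=1$. The cleanest reading of $c_1$ is through the upper bound: for $p<p'$ we have $\hat\rho(V_\gamma)\le \exp(-c\,\mathrm{Area}(\gamma))$ with $c$ the limiting constant. It therefore suffices to prove, for $p$ small and rectangular loops $\gamma$ in $\Z^d$,
\[
\hat\rho_{p,q,2,\Z^d}(V_\gamma)\le \exp\bigl(-c(p)\,\mathrm{Area}(\gamma)\bigr)\quad\text{with } c(p)\to\infty \text{ as } p\to 0 .
\]
Here $V_\gamma$ is the event that $\gamma$ is a boundary of a $2$-chain supported on $P$. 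Once this holds, the limit $-\log\hat\rho(V_\gamma)/\mathrm{Area}(\gamma)$ in Theorem~\ref{thm:areaperimeter} must be at least $c(p)$, so $c_1(p,q,1,d)\ge c(p)\to\infty$.

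\textbf{Step 1: reduce to Bernoulli percolation.} By Theorem~\ref{thm:stochdom} (with $p_1=0$, so that $\rho_{p,0,q,1}$ is the plaquette random cluster model), $\hat\rho$ is stochastically dominated by independent Bernoulli plaquette percolation with parameter $p$. Since $V_\gamma$ is increasing, it suffices to bound its probability under Bernoulli$(p)$ plaquette percolation. This comparison passes to the weak limit on $\Z^d$.

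\textbf{Step 2: Peierls-type counting.} If $V_\gamma$ holds, there is a $2$-chain $\tau$ with $\partial\tau=\gamma$ whose support $S$ lies in $P$. Take $\tau$ with minimal support among such chains. Then $S$ is connected in the plaquette-adjacency sense (sharing an edge) together with $\gamma$, and $|S|\ge \mathrm{Area}(\gamma)$. Moreover $S$ contains a plaquette adjacent to $\gamma$, so it is anchored at one of $|\gamma|$ edges. The number of connected plaquette sets of size $m$ containing a given plaquette is at most $C_d^m$ for a lattice-animal constant $C_d$. A union bound then gives
\[
\hat\rho(V_\gamma)\le |\gamma|\sum_{m\ge \mathrm{Area}(\gamma)} (C_d p)^m \le 2|\gamma| (C_dp)^{\mathrm{Area}(\gamma)}
\]
for $p<1/(2C_d)$. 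Since $|\gamma|\le 4\,\mathrm{Area}(\gamma)$ for rectangles, we get $\hat\rho(V_\gamma)\le \exp\bigl(-(\log\tfrac{1}{C_dp}-o(1))\mathrm{Area}(\gamma)\bigr)$. This yields $c(p)\ge \log(1/(C_dp))-\varepsilon\to\infty$.

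\textbf{Main obstacle.} The subtle points are the connectivity of a minimal-support filling and the meaning of $c_1$. For connectivity: if $\operatorname{supp}\tau$ splits into plaquette-components, each component $\tau_j$ has $\partial\tau_j$ supported on $\gamma$'s edges plus edges shared with no other component. Discarding any component whose boundary vanishes gives a smaller filling, so every remaining component touches $\gamma$. This still suffices: bound by a union over at most $|\gamma|$ anchored animals of total size $m$, which is still $(C_d'p)^m$ up to a factor $e^{O(|\gamma|)}$, absorbed as above. As for $c_1$, if it is defined via the lower bound in~\eqref{eq:wilsoninequalities1}, I would use instead the stated limit for $p<p'$, which identifies $c_1$ with the exact area-law rate, bounded below by $c(p)$.
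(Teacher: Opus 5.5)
Your proof is correct. After the step shared with the paper, namely domination by Bernoulli plaquette percolation via Theorem~\ref{thm:stochdom}, it takes a genuinely different route.

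\textbf{The paper's route.} It specializes to the square $\gamma_n=\partial[0,n]^2$. A mod-$q$ flow argument shows that any filling must contain, inside each slab of plaquettes crossing the hyperplane $x=j-1/2$, a plaquette path from the bottom edge $e_j$ to the top edge $e_j'$. The paper then applies the BK inequality to these $n$ disjointly occurring connections. Combined with exponential decay of plaquette connectivity, $\rho(e\leftrightarrow e')\le e^{-c(p)d(e,e')}$ with $c(p)\to\infty$, this gives $\rho(V_{\gamma_n})\le e^{-c(p)n^2}$.

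\textbf{Your route.} You make a single Peierls count on a support-minimal filling. Each of its plaquette components meets $\gamma$, and it has at least $\mathrm{Area}(\gamma)$ plaquettes. The anchoring entropy $e^{O(|\gamma|)}$ is absorbed by the trivial bound $|\gamma|\le 4\,\mathrm{Area}(\gamma)$. This bound holds for any cycle, since every edge of $\gamma$ lies on a plaquette of any filling.

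\textbf{What each buys.} Your version needs neither BK, nor the slab/flow argument, nor the value of $\mathrm{Area}(\gamma_n)$. It gives the explicit rate $c_1\ge\log(1/p)-O(1)$ for arbitrary cycles, and the same count goes through for higher-dimensional plaquettes. The paper's argument instead exhibits $n$ disjoint plaquette paths of length $n$ inside any filling. This certifies $\mathrm{Area}(\gamma_n)=n^2$ and reduces the area law to a one-dimensional connectivity estimate.

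Your concern about which definition of $c_1$ is meant is moot. Combining the lower bound in~\eqref{eq:wilsoninequalities1} with your upper bound on large rectangles already forces $c_1\ge c(p)$ under either reading.
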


\begin{proof}
    By Theorem~\ref{thm:stochdom}, it suffices to consider the case $q=1.$ For two edges $e_1,e_2$, write $e_1\leftrightarrow e_2$ if they are connected by a path of open plaquettes each meeting at an edge. By standard arguments, when $p_2$ is sufficiently small, then $\rho \paren{e_1\leftrightarrow e_2}\leq e^{-c(p) d\paren{e_1,e_2}}$, where $d\paren{e_1,e_2}$ is the distance between the two closest pairs of incident vertices of the two edges and $\lim_{p\to 0}c(p)=\infty.$ 

    We temporarily reset $\gamma_n$ to be $\partial \brac{0,n}^2\times\set{0}^{d-2}.$ Let $e_j$ be the edge between $\paren{j-1,0,0,\ldots,0}$ and   $\paren{j,0,0,\ldots,0}$ for $j=1,\ldots,n$ and let $e_j'$ be the edge between $\paren{j-1,n,0,\ldots,0}$ and   $\paren{j,n,0,\ldots,0}.$ It suffices to show that if $V_{\gamma_n}$ occurs then the events $e_j\leftrightarrow e_j'$ occur with disjoint witnesses. From that claim, it follows from the BK inequality that
    $$
        \rho(V_{\gamma_n})
        \leq \rho\bigl(\square_{j=1}^{n} e_j\leftrightarrow e_j' \bigr)
        \leq (e^{-c(p) n})^n
        =
        e^{-c(p)\mathrm{Area}(\gamma)},
    $$
    where we denote the event that the events $A_1,\ldots, A_n$ occur with disjoint witnesses by $\square_{j=1}^n A_j.$
    
    We now prove the claim. To this end, let $S_j = \paren{V,E}$ be the graph where the vertices $V$ are the edges of $\Z^d$ that intersect the hyperplane $x=j-1/2,$ and the edges $E$ are the plaquettes that intersect that hyperplane. We will show that if $V_{\gamma_n} $ occurs, then so does $ e_j \xleftrightarrow{S_j} e_j'.$ To see why, suppose that $V_{\gamma_n}$ occurs and is witnessed by a $2$-chain $\tau=\sum_{a_i} \sigma_j$ where $a_i\in \{1,\ldots,q-1 \}.$ Let $\sigma_1',\ldots,\sigma_k'$ be the plaquettes of $S_j$ in the support of $\tau,$ and denote their coefficients by $a_1',\ldots a_n'.$ Each plaquette $\sigma_k'$ is incident to $2a_k'$ edges of $S_j$, half with orientation $1$ and half with orientation $-1.$ 
    On the other hand, since $\partial \tau=\gamma_n$, every edge $e\in V\setminus \{e_j,e_j' \}$ is incident to $0 \mod q$ plaquettes when counted with multiplicity and orientation, whereas $e_j$ is incident to $\pm 1$ and $e_j'$ is incident to $\mp 1$ plaquettes. This defines a mod $q$ flow on $S_j$ with source and sink $e_j$ and $e_j'$ respectively. It follows from standard arguments that $e_j$ is connected to $e_j'$ via plaquettes entirely contained in $S_j.$
\end{proof}

To show the existence of a region where the Marcu--Fredenhagen ratio limits to zero, we compare the coefficients of exponential decay for the random cluster model with the coefficient of area law decay for the plaquette random cluster model.

\begin{prop}\label{proposition: region a2}
Assume the same hypotheses as in Theorem~\ref{thm:nontrivial}.  Let $p''=p''\paren{q,1,d},$ where $p''\paren{q,1,d}$ is given in Theorem~\ref{thm:areaperimeter}. Then, if \( p_2 > p'' \) and \( p_1  \) is sufficiently small (see~Figure~\ref{figure: zero region}), we have 
	\begin{align*}
		\lim_{n \to \infty} R(p_2,p_1,n) = 0.
	\end{align*} 
\end{prop}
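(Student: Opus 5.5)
We need $\hat R(p_2,p_1,n)=\rho(V_{\gamma_n'})^2/\rho(V_{\gamma_n})\to 0$ when $p_2>p''$ and $p_1$ is small. The plan is to bound the numerator from above by something decaying exponentially in $n$ with a rate that blows up as $p_1\to 0$, and the denominator from below by a perimeter-law bound whose rate stays bounded as $p_1\to0$. Since $\gamma_n'$ is an open path with endpoints $x_n,y_n$ at distance $n$, we will show $\rho(V_{\gamma_n'})\le e^{-c(p_1)n}$ for some rate $c(p_1)$ with $c(p_1)\to\infty$ as $p_1\to 0$, uniformly in $p_2$. Since $\rho(V_{\gamma_n})\ge e^{-C|\gamma_n|}=e^{-4Cn}$ with $C$ independent of $p_1$, the ratio is then at most $e^{-(2c(p_1)-4C)n}\to0$ once $p_1$ is small.

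\emph{Lower bound on $\rho(V_{\gamma_n})$.} Here $V_{\gamma_n}$ is increasing in $(P_2,P_1)$. By Theorem~\ref{thm:stochdom} (monotonicity in $p_1$) we have $\rho_{p_2,p_1}(V_{\gamma_n})\ge\rho_{p_2,0}(V_{\gamma_n})$. By Proposition~\ref{prop:10}, the $P_2$-marginal of $\rho_{p_2,0}$ is the plaquette random cluster model $\hat\rho_{p_2,q,2,\Z^d}$, and $V_{\gamma_n}$ with $P_1=X^{(0)}$ is exactly its Wilson-loop event. Theorem~\ref{thm:areaperimeter} with $p_2>p''$ then gives the perimeter law $\rho(V_{\gamma_n})\ge e^{-C|\gamma_n|}$, where $C=C(p_2,q,d)$ does not depend on $p_1$. (We must check that the free infinite-volume limit agrees with the one in Theorem~\ref{thm:areaperimeter}; alternatively, pass to the limit from the finite boxes using the same monotonicity.)

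\emph{Upper bound on $\rho(V_{\gamma_n'})$; this is the main step.} On $V_{\gamma_n'}$ there is a $2$-chain $\tau$ in $P_2$ with $\gamma_n'-\partial\tau$ supported on $P_1$. Since $\partial\gamma_n'=y_n-x_n\neq0$, the $1$-chain $c=\gamma_n'-\partial\tau\in C_1(P_1)$ satisfies $\partial c=y_n-x_n$, so $x_n$ and $y_n$ are joined by a path of edges in $P_1$; this is the standard argument that a mod-$q$ flow with a source and a sink contains a path, as in the proof of Proposition~\ref{prop:c3}. Hence $V_{\gamma_n'}\subseteq\{x_n\leftrightarrow y_n \text{ in } P_1\}$, which is an increasing event in $P_1$ alone. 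Theorem~\ref{thm:stochdom} shows that $P_1$ is dominated by Bernoulli bond percolation with parameter $p_1$, whatever the value of $p_2$. Subcritical path counting gives
\[
\rho(x_n\leftrightarrow y_n)\le \sum_{k\ge n}(2d)^k p_1^k\le 2(2dp_1)^n
\]
for $p_1<1/(4d)$, so $c(p_1)=\log(1/(2dp_1))-o(1)\to\infty$. The only delicate point is this topological reduction, and in particular checking that the relative boundary condition forces a genuine $P_1$-path. I expect it to go through exactly as in Proposition~\ref{prop:c3}.

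\emph{Conclusion.} Combining the two bounds, $\hat R\le 4(2dp_1)^{2n}e^{4Cn}\to0$ whenever $(2dp_1)^2e^{4C}<1$, that is, for $p_1$ below an explicit threshold depending on $p_2,q,d$. By Theorem~\ref{thm:W=V}, $R=\hat R$, which gives the proposition.
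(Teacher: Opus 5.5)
Your proof is correct and has the same skeleton as the paper's. In both, the denominator is bounded below by $\rho_{p_2,0}(V_{\gamma_n})$ using monotonicity in $p_1$, after which the perimeter law of Theorem~\ref{thm:areaperimeter} for $p_2>p''$ applies. The numerator is bounded by a connection probability in $P_1$ whose exponential rate diverges as $p_1\to 0$.

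The routes differ only in how the numerator is handled.

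\emph{The paper's route.} It dominates by $\rho_{1,p_1}$ (all plaquettes open) and observes that $V_{\gamma_n'}$ then coincides with $\{x_n\leftrightarrow y_n\}$ in $P_1$. It identifies the $P_1$-marginal with a random-cluster model via Proposition~\ref{prop:10}. It then invokes sharpness of the random-cluster phase transition, together with Proposition~\ref{prop:c3}, to make the rate exceed $4(1+\varepsilon)c(p_2)$.

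\emph{Your route.} You prove the one-sided inclusion $V_{\gamma_n'}\subseteq\{x_n\leftrightarrow y_n \text{ in } P_1\}$ for arbitrary $P_2$. You then use the Bernoulli upper domination in Theorem~\ref{thm:stochdom} and count self-avoiding paths.

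The step you flagged as delicate is sound. Restrict $c=\gamma_n'-\partial\tau$ to the $P_1$-component of $x_n$. The boundary of the restriction is the restriction of $y_n-x_n$, and the boundary of any $1$-chain has coefficient sum $0$ in $\Z_q$. So $y_n$ must lie in that component.

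What your route buys:
\begin{itemize}
\item An explicit threshold, $(2dp_1)^2e^{4C}<1$.
\item No need for the deep sharpness theorem.
\item No need for the identification of the $P_1$-marginal, whose stated hypotheses in Proposition~\ref{prop:10} require care when $i=1$.
\item A transparent reason why the rate $\log(1/(2dp_1))$ blows up. The paper attributes this to Proposition~\ref{prop:c3}, which is really about area-law constants of the plaquette model.
\end{itemize}

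The paper's route would in principle give exponential decay for every $p_1$ below the relevant random-cluster critical point. That is not needed here, since the comparison with $c(p_2)$ forces $p_1$ small anyway.

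The caveat you raise, about matching the free infinite-volume limit with the one in Theorem~\ref{thm:areaperimeter}, applies equally to the paper's proof.
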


\begin{proof}
	Let \( p_2,p_1 \in (0,1). \) Then, by Theorem~\ref{thm:stochdom}, we have  
	\begin{align*}
		\hat R(p_2,p_1,n) = \frac{\rho_{p_2,p_1}(V_{\gamma_n'})\rho_{p_2,p_1}(V_{\gamma_n''})}{\rho_{p_2,p_1}(V_{\gamma_n})} 
		=
		 \frac{\rho_{p_2,p_1}( V_{\gamma_n'} )^2 }{\rho_{p_2,p_1}( \eta \in V_{\gamma_n}) }
		\leq
		 \frac{\rho_{1,p_1}( V_{\gamma_n'} )^2 }{\rho_{p_2,0}(  V_{\gamma_n}) }.
	\end{align*}
	By combining~Theorem~\ref{thm:W=V} and~\cite[Theorem 7]{duncan2025topological}, it follows that if \( p_2 > p_c(q) \), then there is \( c({p_2})>0 \) such that
	\begin{align*}
		\rho_{p_2,0}(  V_{\gamma_n}) \geq e^{-c({p_2})|\gamma_n|} = e^{-2c({p_2})|\gamma_n'|}.
	\end{align*} 

Observe that $\paren{\Z^d,P_1}\in V_{\gamma_n'}$ if and only if $x_n$ is connected to $y_n$ in $P_1,$ where $x_n=({0,Rn,0,\ldots,0})$ and ${y_n=({Tn,R_n,0,\ldots,0}).}$ By Proposition~\ref{prop:10} and the sharpness of the phase transition for the random cluster model~\cite{duminil2019sharp}, there exists a $c'(P_1)$ so that 
    $$
        \rho_{1,p_1}( V_{\gamma_n'}) 
        \leq 
        e^{-c'(P_1)n}.
    $$
    Then, by Proposition~\ref{prop:c3}, we have that for $p_1$ sufficiently small,
    $$
        c'(P_1)
        \geq
        4\paren{1+\varepsilon}c({p_2}).
    $$

    From this, it follows that
    $$
        \rho_{1,p_1}( V_{\gamma_n'})
        \leq  
        e^{-4\paren{1+\varepsilon}c({p_2})n}=e^{-\paren{1+\varepsilon}c({p_2})|\gamma_n'|},
    $$


    where we used that
    $$
            |\gamma_n'| = 4n.
    $$

    The desired conclusion immediately follows.
    %
%
%
\end{proof}

\begin{figure}[h] 
\begin{subfigure}[t]{0.3\textwidth}

\begin{tikzpicture}[scale=.65]
    \begin{axis}[
        axis x line=middle,axis y line=middle, ymin=-.05, ymax=1.1, xmin=-.05, xmax=1.1, xtick={1}, ytick={1}, 
        xlabel={$p_2$}, ylabel={$p_1$},
        x label style={at={(axis description cs:1.5,0.08)},scale=.8},
        y label style={at={(axis description cs:0.08,1.5)},scale=.8}, 
        every axis/.append style={font=\tiny}, ]

\addplot[domain=0.3:1, samples=201, name path=E, draw=none]{0};
\addplot[domain=0.3:1, samples=201, name path=F, draw=none]{.5*(x-.3)^(1/2)};

\addplot [fill=blue, fill opacity = .15] fill between [of=E and F];


\end{axis}


	\end{tikzpicture}
	\caption{The region in Proposition~\ref{proposition: region a2}.}\label{figure: zero region}
    \end{subfigure}
    \hspace{1em}
    \begin{subfigure}[t]{0.3\textwidth} 

\begin{tikzpicture}[scale=.65]

\begin{axis}[
        axis x line=middle,axis y line=middle, ymin=-.05, ymax=1.1, xmin=-.05, xmax=1.1, xtick={1}, ytick={1}, xlabel={ $p_2$}, ylabel={ $p_1$},
        x label style={at={(axis description cs:1.5,0.08)},scale=.8},
        y label style={at={(axis description cs:0.08,1.5)},scale=.8},
    every axis/.append style={font=\tiny}, ]
 
\addplot[domain=0:1, samples=201, name path=A,draw=none]{1};  

\addplot[domain=0:1, samples=201, name path=B,draw=none]{0.8};

\addplot [fill=blue, fill opacity = .15] fill between [of=A and B];

\end{axis} 

	\end{tikzpicture}
	\caption{The region in Proposition~\ref{prop: nonzero 3}.} \label{figure: nonzero region 3}
    \end{subfigure}
    \hspace{1em}
    \begin{subfigure}[t]{0.3\textwidth} 

\begin{tikzpicture}[scale=.65]

\begin{axis}[
        axis x line=middle,axis y line=middle, ymin=-.05, ymax=1.1, xmin=-.05, xmax=1.1, xtick={1}, ytick={1}, xlabel={ $p_2$}, ylabel={ $p_1$},
        x label style={at={(axis description cs:1.5,0.08)},scale=.8},
        y label style={at={(axis description cs:0.08,1.5)},scale=.8},
    every axis/.append style={font=\tiny}, ]
        
\addplot[domain=0.08:0.081, samples=201, name path=E, draw=none]{1000*(x-0.08)};
\addplot[domain=0.0:0.001, samples=201, name path=F, draw=none]{1000*x};

\addplot [fill=blue, fill opacity = .15] fill between [of=E and F];

\end{axis}

	\end{tikzpicture}
	\caption{The region in Proposition~\ref{prop: nonzero 2}.} \label{figure: nonzero region 2} 
    \end{subfigure}
\end{figure}

To provide proofs for regions where the  Marcu--Fredenhagen Ratio is asymptotically bounded away from zero, we require a technical result that allows us to compare the value of $\rho^2\paren{P,Q}$ after ``switching'' maximal strongly connected components of $P_2$ and $Q_2.$ This will follow quickly from the next lemma. While we require this for $i=1$ only, we include a more general statement in case it will be useful in other contexts. 
 
\begin{lemma}\label{lemma:switch}
    Let $Z$ be a simply connected, finite cell complex. For a subcomplex $C$ of $Z,$ denote by $C^{\brac{j}}$ the $j$-dimensional percolation subcomplex containing all $j$-cells incident to an $(j+1)$-cell of $C.$
    If $\paren{X,A}$ and $\paren{Y,B}$ is a pair of subcomplexes of $Z$ of dimensions $(i+1,i)$ satisfying that 
    $$
        X\cap Y 
        = 
        Z^{(i)} \quad \text{and} \quad (X^{\brac{i}}\cap B) \cup (Y^{\brac{i}}\cap A)\cup \paren{A\cap B} = Z^{\paren{i-1}},
    $$
    then
    $$ 
        \dim H^i\paren{X\cup Y,A\cup B}
        =
        \dim H^i\paren{X,A}+\dim H^i\paren{Y,B}-\dim C^i\paren{Z} .
    $$
\end{lemma}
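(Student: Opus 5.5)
The plan is to treat everything as linear algebra inside the single vector space $C^i(Z;\Z_q)$, using the identification from Section~\ref{sec:topos} that $H^i(P,Q)=Z^i(P,Q)$ whenever $P,Q$ are percolation subcomplexes of dimensions $(i+1)$ and $i$. Set
\[
U=Z^i(X,A)=\{f: f|_A=0,\ \delta f|_{X}=0\},\qquad V=Z^i(Y,B)=\{f: f|_B=0,\ \delta f|_{Y}=0\}.
\]
A cochain is compatible with $(X\cup Y,A\cup B)$ exactly when it satisfies both families of linear constraints. Hence $Z^i(X\cup Y,A\cup B)=U\cap V$, and the Grassmann formula gives
\[
\dim H^i(X\cup Y,A\cup B)=\dim U+\dim V-\dim(U+V).
\]
The lemma is therefore equivalent to the single claim $U+V=C^i(Z)$. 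Equivalently, every $g\in C^i(Z)$ can be written as $g=u+v$ with $u\in U$ and $v\in V$.

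As a cross-check I would also phrase this through the relative Mayer--Vietoris sequence of Section~\ref{sec:MV}. The hypotheses say exactly that $(X\cap Y,A\cap B)=(Z^{(i)},Z^{(i-1)})$, and $H^i$ of that pair is all of $C^i(Z)$, since there are no $(i+1)$-cells and no $i$-cells to kill. The claim $U+V=C^i(Z)$ is then surjectivity of $H^i(X,A)\oplus H^i(Y,B)\to H^i(X\cap Y,A\cap B)$. By exactness, this is the same as the connecting map $\delta^*$ into $H^{i+1}(X\cup Y,A\cup B)$ vanishing, which is where I expect the simple connectivity of $Z$ to enter.

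For the decomposition itself I would construct $u$ explicitly. Partition the $i$-cells into the following classes:
\begin{itemize}
\item cells of $A$;
\item cells of $B$;
\item cells of $X^{[i]}$ that lie in neither $A$ nor $B$;
\item cells of $Y^{[i]}$ that lie in neither $A$ nor $B$;
\item the remaining cells.
\end{itemize}
The second hypothesis makes the relevant classes disjoint: $A\cap B$ has no $i$-cells, $B$ does not meet the boundary cells of $X$, and $A$ does not meet the boundary cells of $Y$. The natural first attempt is to set $u=0$ on $A$ and on the cells touching only $X$, and $u=g$ on $B$ and on the cells touching only $Y$. Then $v=g-u$ vanishes on $B$. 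We also get $\delta u=0$ on any $(i+1)$-cell of $X$ and $\delta v=0$ on any $(i+1)$-cell of $Y$, because each such cell sees only cells where $u$ (respectively $v$) is identically zero.

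The main obstacle is the set of $i$-cells lying in both $X^{[i]}$ and $Y^{[i]}$, where an $X$-plaquette and a $Y$-plaquette meet along a common face. There the naive choice fails and the constraints $\delta u|_X=0$ and $\delta(g-u)|_Y=0$ genuinely interact. My plan for this step is to correct the naive $u$ by a cocycle. I would look for $u=u_0+w$ where $u_0$ is the naive choice and $w$ solves $\delta w=-\delta u_0$ on $X$ and $\delta w=\delta(g-u_0)$ on $Y$, with $w$ vanishing on $A\cup B$. This requires showing that the prescribed $(i+1)$-data is the coboundary of something. I would try to use the vanishing of the relevant cohomology of the simply connected $Z$ (for $i=1$, $H^1(Z)=0$) to integrate it, working component by component on the strongly connected pieces of $X$ and $Y$, which is how the lemma is used later. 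This is the step whose details I am least sure of; if it resists, I would fall back on the Mayer--Vietoris formulation and aim to prove directly that $\delta^*=0$.
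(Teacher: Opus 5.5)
\textbf{There is a genuine gap, at exactly the step you flagged, and it cannot be closed: the statement is false as written.}

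Your reduction is sound. Inside $C^i(Z)$ one has $Z^i(X\cup Y,A\cup B)=U\cap V$, so the identity is equivalent to $U+V=C^i(Z)$. This is the same as surjectivity of the Mayer--Vietoris map $\xi$ that the paper uses. Your naive choice of $u$ is essentially the paper's decomposition ($g_X=f$ on $B\cup Y^{[i]}$, zero elsewhere).

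The cocycle correction on $i$-cells in $X^{[i]}\cap Y^{[i]}$ cannot be carried out, because when such cells exist the conclusion fails, even for contractible $Z$. Take $i=1$ and $Z=[0,1]^3$. Let $X=Z^{(1)}\cup\{s\}$ with $s$ the top face, $Y=Z^{(1)}\cup\{\text{the other five faces}\}$, and $A=B=Z^{(0)}$; all hypotheses hold.
\begin{itemize}
\item With outward orientations, $\partial s=-\sum_{F\neq s}\partial F$.
\item Hence every $u\in U$ and every $v\in V$ satisfy $u(\partial s)=0=v(\partial s)$.
\item So $e^*\notin U+V$ for any edge $e$ of $s$.
\item Numerically, $\dim H^1(X\cup Y,A\cup B)=7$, while the right-hand side is $11+7-12=6$.
\end{itemize}

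The obstruction sits in degree $i+1$, not in $H^1(Z)$. Solvability of your system $\delta w=h$ on the $(i+1)$-cells of $X\cup Y$ with $w|_{A\cup B}=0$ is governed by the class of $h$ in $H^{i+1}(X\cup Y,A\cup B)$. In the example, $\partial s\neq 0$ lies in $\partial C_2(X)\cap\partial C_2(Y)$. Equivalently, the connecting map into $H^{2}(X\cup Y,A\cup B)\cong\Z_q$ is nonzero. Simple connectivity of $Z$ has no bearing on this, so neither your cocycle correction nor the fallback ``prove $\delta^*=0$'' can succeed.

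What is missing is an additional hypothesis: no $i$-cell is a face of both an $(i+1)$-cell of $X$ and an $(i+1)$-cell of $Y$, i.e.\ $X^{[i]}\cap Y^{[i]}=Z^{(i-1)}$. Under it, the troublesome class of cells is empty and your naive $u$ already satisfies $u\in U$ and $g-u\in V$. That finishes the proof without using simple connectivity.

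The paper's argument tacitly relies on exactly this condition. It asserts that $g_X$ (which equals $f$ on $Y^{[i]}$) vanishes on every $i$-cell incident to an $(i+1)$-cell of $X$, which holds only if $X^{[i]}\cap Y^{[i]}$ has no $i$-cells. So instead of looking for a cocycle correction, you should add this disjointness to the hypotheses and check it wherever the lemma is applied to switched components.
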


\begin{proof}
    Consider the following part of the Mayer--Vietoris sequence for cohomology
    \begin{equation}\label{eq: diagram}
        C^i(Z)\xleftarrow{\xi} H^i(X,A)\oplus H^i(Y,B)\leftarrow H^i(X\cup Y, A\cup B)\leftarrow 0\leftarrow \dots
    \end{equation}
     (see Section~\ref{sec:MV}), where we substituted
    $$
        H^{i-1}({X\cap Y,A\cap B})
        =
        H^{i-1}({Z^{(i)},Z^{\paren{i-1}}})=0
    $$
    and
    $$
        H^i(X\cap Y,A\cap B)
        =
        H^i({Z^{(i)},Z^{\paren{i-1}}})
        =
        C^i\paren{Z}
    $$
    and $\xi=\phi_X-\phi_Y.$
    The claim will follow if we show that the map $\xi$ in~\eqref{eq: diagram} is surjective. To this end, recall that 
    $$
        \xi\paren{\brac{f_1}\oplus \brac{f_2}} = f_1\mid_{X\cap Y}-f_2\mid_{X\cap Y}.
    $$
 
Let $\hat{X}^i=B\cup Y^{\brac{i}}$ and let $\tilde{Y}^i$ be the $i$-dimensional percolation subcomplex of $Z$ containing the remaining $i$-cells. For $f\in C^i\paren{Z}$ let $g_X, g_Y\in C^i\paren{Z}$ be obtained from  $f\mid_{\hat{X}^i}$ and $f\mid_{\tilde{Y}^i}$ by extending by zero, so $f=g_X+g_Y.$ Since $\hat{X}^i$ contains no $i$-cell of $A,$ it follows that $g_X\in C^i\paren{X,A}.$ Also, by construction, $g_X$ vanishes on all $i$-cells incident to an $(i+1)$-cell of $X$ so  $g_X\in Z^i\paren{X,A}.$ For similar reasons, $g_Y\in Z^i\paren{Y,B}.$  Then
$$\xi\paren{g_X,-g_Y}=g_X\mid_{X\cap Y}+g_Y\mid_{X\cap Y}=\alpha$$
so $\xi$ is surjective.
\end{proof}

\begin{corollary}
\label{cor:switching}
Let $\paren{X_1,A_1},$ $\paren{Y_1,B_1},$ $\paren{X_2,A_2},$ and $\paren{Y_2,B_2}$ be four pairs of percolation subcomplexes which are such that that the hypotheses of the previous lemma are satisfied when taking
$$\paren{\paren{X,A},\paren{Y,B}}=\paren{\paren{X_i,A_i},\paren{Y_j,B_j}}$$
for any $\paren{i,j}\in\set{1,2}\times \set{1,2}.$ Then
\begin{align*}
    &\rho\paren{X_1\cup Y_1,A_1\cup B_1}\rho\paren{X_2\cup Y_2,A_2\cup B_2}\\&\qquad=\rho\paren{X_1\cup Y_2,A_1\cup B_2}\rho\paren{X_2\cup Y_1,A_2\cup B_1}.
\end{align*}
\end{corollary}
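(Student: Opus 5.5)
The plan is to write out the four $\rho$-weights explicitly in the $(k_2,k_1)$ coordinates and show that both sides of the claimed identity have the same counting factors and the same cohomology factors.

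Recall that
\[
\rho(P_2,P_1)\propto k_2^{|P_2|}k_1^{|P_1|}\,q^{b_i(P_2,P_1;\,\Z_q)}.
\]
Every side of the identity is a product of two weights, so the normalizing constants appear squared on both sides and cancel. It therefore suffices to compare the unnormalized weights.

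\textbf{Counting factors.} The hypothesis $X_a\cap Y_b=Z^{(i)}$ means $X_a$ and $Y_b$ share no $(i+1)$-cells, so $|X_a\cup Y_b|=|X_a|+|Y_b|$ for all $a,b\in\{1,2\}$. For the $i$-cells, the hypothesis contains $A_a\cap B_b=Z^{(i-1)}$, so $A_a$ and $B_b$ share no $i$-cells and $|A_a\cup B_b|=|A_a|+|B_b|$. Here $|\cdot|$ counts top-dimensional cells as in the percolation subcomplex convention, and the $(i+1)$-cell count is taken in the $(i+1)$-dimensional part.

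With these additivities, the $k_2$-exponent on the left-hand side is $|X_1|+|Y_1|+|X_2|+|Y_2|$. The right-hand side gives $|X_1|+|Y_2|+|X_2|+|Y_1|$, which is the same. The $k_1$-exponents match in exactly the same way.

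\textbf{Cohomology factors.} Apply Lemma~\ref{lemma:switch} to each of the four pairs $((X_a,A_a),(Y_b,B_b))$, which is allowed by assumption. This gives
\[
b_i(X_a\cup Y_b,A_a\cup B_b)=b_i(X_a,A_a)+b_i(Y_b,B_b)-\dim C^i(Z).
\]
Summing over $(a,b)\in\{(1,1),(2,2)\}$ gives
\[
b_i(X_1,A_1)+b_i(X_2,A_2)+b_i(Y_1,B_1)+b_i(Y_2,B_2)-2\dim C^i(Z).
\]
Summing over $\{(1,2),(2,1)\}$ gives the identical expression. Hence the products $q^{b_i(\cdot)}q^{b_i(\cdot)}$ agree on the two sides.

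Multiplying the counting factors and the cohomology factors then gives the stated equality.

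\textbf{Main obstacle.} The only delicate point is the disjointness used in the cell counts. One has to check that the hypothesis of Lemma~\ref{lemma:switch} really forces $A_a$ and $B_b$ to share no $i$-cells, which is the $A\cap B=Z^{(i-1)}$ clause. One also has to note that cells of dimension $\le i-1$, which are common to all the complexes, do not enter $|P_1|$ or $|P_2|$. Once this is in place, the rest is bookkeeping.
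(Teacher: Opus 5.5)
Your proof is correct and follows the paper's argument. The paper likewise applies Lemma~\ref{lemma:switch} to each of the four pairs and observes that the resulting sums of $b_i$'s agree on the diagonal pairs and the anti-diagonal pairs; your explicit check that the $k_2$- and $k_1$-exponents also match, because $X_a\cap Y_b=Z^{(i)}$ and $A_a\cap B_b=Z^{(i-1)}$ make the top-dimensional cell counts additive, fills in a step the paper leaves implicit.
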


\begin{proof}
    The desired conclusion immediately follows by noting that
    \begin{align*}
        &\dim H_i\paren{X_1\cup Y_1,A_1\cup B_1;\,\Z_q}+ \dim H_i\paren{X_2\cup Y_2,A_2\cup B_2;\,\Z_q}
        \\&\qquad=\dim H_i\paren{X_1\cup Y_2,A_1\cup B_2;\,\Z_q}+\dim H_i\paren{X_2\cup Y_1, A_2\cup B_1;\,\Z_q}.
    \end{align*}
\end{proof}

\begin{prop}\label{prop: nonzero 3}Assume the same hypotheses as in Theorem~\ref{thm:nontrivial}. If \( p_1  \) is sufficiently large  (see~Figure~\ref{figure: nonzero region 3}), then \[ \liminf_{n \to \infty} R(p_2,p_1,n) >0. \]
\end{prop}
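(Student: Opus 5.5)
The plan is to bound the numerator of $\hat R(p_2,p_1,n)$ from below by a constant multiple of the denominator, using the switching identity of Corollary~\ref{cor:switching} on two independent copies of the CPP. By Theorem~\ref{thm:W=V} it suffices to treat $\hat R$. Let $(P,Q)\sim\rho\otimes\rho$ with $P=(P_2,P_1)$ and $Q=(Q_2,Q_1)$. Then
\[
\hat R(p_2,p_1,n)=\frac{\rho\otimes\rho\bigl(P\in V_{\gamma_n'},\,Q\in V_{\gamma_n''}\bigr)}{\rho\otimes\rho\bigl(P\in V_{\gamma_n},\,Q\in\Omega\bigr)},
\]
where $\Omega$ is the whole state space. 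So it suffices to construct a measure-preserving, finite-to-one transformation of pairs of configurations. It should send a positive fraction, uniformly in $n$, of pairs with $P\in V_{\gamma_n}$ to pairs with $P'\in V_{\gamma_n'}$ and $Q'\in V_{\gamma_n''}$.

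\textbf{Step 1 (high-density input).} By Theorem~\ref{thm:stochdom}, $P_1$ dominates Bernoulli bond percolation with parameter $\frac{p_1}{q(1-p_1)+p_1}$, which tends to $1$ as $p_1\to1$. Hence for $p_1$ close to $1$ the following hold uniformly in $n$:
\begin{itemize}
\item with probability at least $1/2$, the endpoints $x_n$ and $y_n$ are joined by a $P_1$-path $\pi$ lying in a thin slab around the midline separating $\gamma_n'$ from $\gamma_n''$;
\item the analogous statement holds for $Q_1$;
\item by a Peierls bound, the set of edges not in $P_1$ consists of small, finite, well-separated pieces.
\end{itemize}
By positive association (Theorem~\ref{thm:positive}), intersecting with the increasing event $V_{\gamma_n}$ costs at most a constant factor. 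We may therefore work on $G=\{P\in V_{\gamma_n}\}\cap\{\pi \text{ exists for } P\}\cap\{\text{the same for } Q\}$. Note that the chains $\gamma_n'-\pi$ and $\gamma_n''+\pi$ are cycles whose sum is $\gamma_n$.

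\textbf{Step 2 (switching).} Cut $Z=\Lambda_N$ along the midline slab into an upper part $U$ and a lower part $L$, using the path $\pi$. Define new pairs by taking the $U$-part of $P$ and the $L$-part of $Q$, and vice versa. Corollary~\ref{cor:switching} then gives
\[
\rho(P)\rho(Q)=\rho(P^U\cup Q^L)\,\rho(Q^U\cup P^L),
\]
so the switch preserves $\rho\otimes\rho$. The hypotheses of Lemma~\ref{lemma:switch} must be checked:
\begin{itemize}
\item the two pieces share no plaquettes;
\item every edge lying in the $1$-skeleton of one piece and in the $P_1$-part of the other lies in $P_1$ of both, so that $(X^{[1]}\cap B)\cup(Y^{[1]}\cap A)\cup(A\cap B)$ reduces to the vertices.
\end{itemize}
To arrange this, I would choose the cut along a $P_1\cap Q_1$-open interface in which neither $P_2$ nor $Q_2$ has any plaquettes. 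This requires an additional local event, namely that no plaquette of $P_2$ or $Q_2$ straddles the chosen interface. I would impose it by a finite-energy resampling only at the two ends $x_n$ and $y_n$, and rely on the density of $P_1$ elsewhere. Exploring from the midline and choosing the first such interface makes the choice a stopping set, so the switch is a bijection on $G$.

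\textbf{Step 3 (topology after switching).} After the swap, the $U$-half carries the $P$-witness for $\gamma_n$ restricted to $U$, together with $\pi\subset P_1$. Since $\partial$ of the restricted chain differs from $\gamma_n'$ only by edges in the interface, which lie in $P_1$, we get $[\gamma_n']=0$ in $H_1(P',P'_1)$, i.e. $P'\in V_{\gamma_n'}$. Symmetrically $Q'\in V_{\gamma_n''}$. Combining with Step 1 gives $\hat R(p_2,p_1,n)\ge c>0$ uniformly in $n$.

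The main obstacle is Step 2. One must find a cut that simultaneously satisfies the disjointness hypotheses of Lemma~\ref{lemma:switch} for all four pairings in Corollary~\ref{cor:switching}, uses only plaquette-free, $P_1\cap Q_1$-open edges along its whole length, and is measurable with respect to information that is symmetric under the swap. I would first try a cut given by the set of $P_1\cap Q_1$-open edges meeting a hyperplane, as in the proof of Proposition~\ref{prop:c3}. The hyperplane would be perturbed around the sparse closed defects to get a codimension-one interface, at a cost controlled by a Peierls estimate as $p_1\to1$.
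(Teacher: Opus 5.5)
There is a genuine gap. Your architecture matches the paper's: write $\hat R(p_2,p_1,n)$ as a ratio under $\rho\otimes\rho$, build a measure-preserving swap from Corollary~\ref{cor:switching}, and reduce to a separation event whose probability conditioned on $P\in V_{\gamma_n}$ is bounded below using positive association and Theorem~\ref{thm:stochdom}. The gap is Step~2. You leave it open, and the mechanism you sketch for it fails in the generality the proposition claims.

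\textbf{Why the plaquette-free interface fails.} The statement must hold for every $p_2$ once $p_1$ is large, in particular for $p_2$ close to $1$.
\begin{enumerate}
\item By Theorem~\ref{thm:stochdom}, $P_2$ and $Q_2$ each dominate Bernoulli plaquette percolation with parameter $\frac{p_2}{q(1-p_2)+p_2}$. This is close to $1$ and does not depend on $p_1$.
\item Any interface separating $\gamma_n'$ from $\gamma_n''$ has size at least of order $n$.
\item A Peierls count then shows that an interface crossed by no plaquette of $P_2\cup Q_2$ exists only with probability exponentially small in $n$.
\end{enumerate}
Finite-energy resampling at $x_n$ and $y_n$ changes only $O(1)$ plaquettes. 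A high density of $P_1$ does not make $P_2$ sparse, unless one already knows that plaquettes with open boundary are harmless, and that is exactly the missing ingredient. Separately, you have the hypothesis of Lemma~\ref{lemma:switch} backwards. It requires $A\cap B$ to contain no edges, and it forbids a $P_1$-edge of one piece from lying on a plaquette of the other. So an interface edge cannot lie in $P_1$ of both pieces, as you wrote.

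\textbf{The missing idea.} A plaquette $\sigma$ with $\partial\sigma\subseteq P_1$ imposes no constraint on compatible cochains beyond those already imposed by $P_1$. It is therefore irrelevant to $|H^1(P_2,P_1;\Z_q)|$, and one should never cut through $P_2$ geometrically. The paper proceeds as follows.
\begin{enumerate}
\item It takes the strongly connected clusters of plaquettes of $P_2\cup Q_2$ whose boundary is not contained in $P_1\cup Q_1$.
\item It swaps $P$ and $Q$ on the clusters touching $\gamma_n''$, together with their incident $(P_1\cup Q_1)$-edges. This region is determined by $P\cup Q$, which the swap leaves unchanged, so the map is an involution.
\item The required event is that no cluster touches both $\gamma_n'$ and $\gamma_n''$. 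On this separation event, $V_{\gamma_n}$ already implies $V_{\gamma_n'}\cap V_{\gamma_n''}$: restrict a witness to the clusters touching each half. So neither your path $\pi$ nor the endpoint resampling is needed; the $P_1$-connection of $x_n$ and $y_n$ is forced by $V_{\gamma_n'}$ itself.
\item The separation event contains the event $\mathcal E'$ that plaquettes with boundary in $P_1\cap Q_1$ separate $\gamma_n'$ from $\gamma_n''$. $\mathcal E'$ is increasing and depends only on $P_1\cap Q_1$. Hence positive association removes the conditioning on $V_{\gamma_n}$, and Theorem~\ref{thm:stochdom} compares $\mathcal E'$ with Bernoulli bond percolation of parameter close to $1$, uniformly in $p_2$ and $n$.
\end{enumerate}
Your closing idea of a $P_1\cap Q_1$-open interface perturbed around closed defects points in this direction. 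It only works once fully open plaquettes are treated as removable rather than forbidden.
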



\begin{proof}
Say that two $i$-cells $\sigma_1$ and $\sigma_2$ of a cell complex $Y$ are \emph{strongly connected} if there is a path $\tau_0=\sigma_1,\tau_2,\ldots,\tau_k=\sigma_2$ of $i$-cells of $Y$ between them so that $\tau_{j}$ and $\tau_{j+1}$ intersect in an $(i-1)$-cell for $j=0,\ldots,k-1.$ Call the resulting graph $\tilde{G}\brac{Y}.$ Given a pair of percolation sub-complexes \( P=(P_2,P_1) \) where \( P_2\) has dimension \(2\) and \( P_1 \) has dimension \( 1 \) respectively, let \( G=G[P] \) be the induced subgraph of $\tilde{G}\brac{P_2}$ on the collection of $2$-cells $\sigma$ of $P_2$ so that $\partial\sigma \nsubseteq P_1.$  

For path \( \gamma \), let \( G_\gamma[P]\) be the restriction of \( G[P] \) to its connected components that contain at least one $2$-cell incident to an edge of $\gamma.$  Denote the vertex set of $G_\gamma[P]$ by   $V( G_\gamma[P])$ and let $V_{1}( G_\gamma[P])$ be the collection of edges of $P_1$ that are either contained in $\gamma$ or are incident to at least one $2$-cell in $G_\gamma[P].$ Finally, to simplify notation, we introduce the following conventions. If $P=\paren{P_2,P_1}$ and $Q=\paren{Q_2,Q_1}$ are percolation subcomplexes of a cell complex $X$, denote by
$P\cup Q$ the pair $\paren{P_2\cup Q_2,P_1\cup Q_1},$ $P\cap Q$ the pair $\paren{P_2\cap Q_2,P_1\cap Q_1},$ and $P\setminus Q$ the pair $\paren{P_2\setminus Q_2,P_1\setminus Q_1}.$  

Let \( P^\gamma = (P_2^\gamma ,P_1^\gamma ),\) where $P_2^\gamma$ is the is the $2$-dimensional percolation subcomplex containing the  $2$-cells $V( G_\gamma[P])$ and $P_1^{\gamma}$ is the $1$-dimensional percolation subcomplex containing the $1$-cells $V_{1}( G_\gamma[P]).$
    For two disjoint paths \( \gamma' \) and \( \gamma'' \) and two pairs of percolation subcomplexes $P=\paren{P_2,P_1}$ and $Q=\paren{Q_2,Q_1}$ of the appropriate dimensions set 
    \[
        \mathcal{E} = \mathcal{E}_{\gamma',\gamma''} = \bigl\{ P,Q \colon  V(G_{\gamma'}[P \cup Q]) \cap V(G_{\gamma''}[P \cup Q]) = \emptyset \bigr\}.
    \]
    Note that if  $A=\Xi^{i+1}\times \Xi^{i}$ denotes the trivial event then 
    \begin{align*}
         \bigl((V_{\gamma'} \cap V_{\gamma''})\times A\bigr) \cap \mathcal{E} 
         =
         \bigl(V_{\gamma'+ \gamma''} \times A \bigr)\cap \mathcal{E}.
    \end{align*}
    We now show that
	\begin{equation}\label{eq: coupling 2}
		\hat R(p_2,p_1,n) \geq \rho^2   \bigl((P,Q) \in \mathcal{E} \mid P \in V_{\gamma_n} \bigr).
    \end{equation}
    
    To this end, for \( (P,Q) \in \mathcal{E},\) let
    \begin{equation*}
        \begin{cases}
            \varphi_1(P,Q) =  \bigl(Q  \cap (P\cup Q)^{\gamma_n''}\bigr)
            \cup
            \bigl(P \smallsetminus (P\cup Q)^{\gamma_n''} \bigr)  
            \cr  
            \varphi_2(P,Q) =  
            \bigl(P  \cap (P\cup Q)^{\gamma_n''} \bigr)
            \cup
            \bigl(Q \smallsetminus (P\cup Q)^{\gamma_n''}  
            \bigr).
        \end{cases}
    \end{equation*}
    Let $\rho_N=\rho_{p_2,p_1,q,1,\Lambda_N^d}.$  By applying Corollary~\ref{cor:switching} with $\paren{X_1,A_1}=P\smallsetminus \paren{P\cup Q}^{\gamma_n''},$  $\paren{Y_1,B_1}=P\cap \paren{P\cup Q}^{\gamma_n''},$ $\paren{X_2,A_2}=Q\smallsetminus \paren{P\cup Q}^{\gamma_n''},$ and  $\paren{Y_2,B_2}=Q\cap \paren{P\cup Q}^{\gamma_n''},$ we find that  if \( (P,Q) \sim \rho_N^2|_{\mathcal{E}} \), then 
	\begin{align*}
		&\bigl(
        \varphi_1(P,Q), \varphi_2(P,Q)
        \bigr) \sim \rho_N^2 |_{\mathcal{E}}.
	\end{align*} 
    By taking $N\to \infty,$ we obtain the same statement for the infinite volume measure $\rho.$ 
	Moreover, since for \( (P,Q) \in \mathcal{E}\) we have
	\begin{align*}
		&(P,Q) \in ( V_{\gamma_n'} \times V_{\gamma_n''} ) 
		\Leftrightarrow
		\bigl( \varphi_1(P,Q),\varphi_2(P,Q) \bigr) \in \bigl(( V_{\gamma_n} \cap V_{\gamma_n'})\times A \bigr),
	\end{align*}
	it follows that
	\begin{align*}
		&R(p_2,p_1,n) = \frac{\rho(  V_{\gamma_n'}) \rho( V_{\gamma_n''})}{\rho (V_{\gamma_n})}
        =
        \frac{\rho^2(P \in V_{\gamma_n'},\, Q \in V_{\gamma_n''})}{\rho^2(P \in V_{\gamma_n})}
        \\&\qquad\geq 
        \frac{\rho^2(P \in V_{\gamma_n'},\, Q \in V_{\gamma_n''},\, (P,Q) \in \mathcal{E})}{\rho^2(P \in V_{\gamma_n})}
        =
        \frac{\rho^2(P \in V_{\gamma_n'} \cap V_{\gamma_n''},\,\, (P,Q) \in  \mathcal{E})}{\rho^2(P \in V_{\gamma_n})}
        \\&\qquad=
        \frac{\rho^2(P \in V_{\gamma_n' + \gamma_n''},\,\, (P,Q) \in  \mathcal{E})}{\rho^2(P \in V_{\gamma_n})}
		=
		\rho^2 ( (P,Q) \in \mathcal{E}  \mid  P \in V_{\gamma_n} ,\, Q \in  A).
	\end{align*}
	This concludes the proof of~\eqref{eq: coupling 2}.

    We now give a lower bound of the right-hand side of~\eqref{eq: coupling 2} by showing that if \( p_1 \) is sufficiently large, then 
	\begin{equation}\label{eq: part 2 2}
        \liminf_{n \to \infty} \rho^2 ( (P,Q) \in \mathcal{E} \mid P \in V_{\gamma_n}) > 0 .
    \end{equation}
    To this end, given a pair \( P=(P_2,P_1) \) of percolation sub-complexes, let  \( \mathcal{E}'\) be the event that the set \( \tilde P \coloneqq \{ \sigma \in X^{\brac{2}} \colon \partial \sigma \subseteq P_1 \} \) separates \( \gamma_n'\) and \( \gamma_n''\), in the sense that any connected set of $2$-cells adjacent to both \( \gamma_n'\) and \( \gamma_n''\) must intersect \( \tilde P\). Then \( \mathcal{E} \supseteq \mathcal{E}',\) and hence
\begin{align*}
    &\rho^2 ( (P,Q) \in \mathcal{E}_{\gamma_n',\gamma''_n} \mid P \in V_{\gamma_n})
    \\&\qquad \geq 
    \rho^2 ( P\cap Q \in \mathcal{E}' \mid P \in V_{\gamma_n+\gamma_n'}).
\end{align*}
Since \( P \in V_{\gamma_n''+\gamma_n''}\) and \( P\cap Q \in \mathcal{E}'\) are both  increasing events and \( \rho^2 \) is positively associated, we can bound the right-hand side of the previous equation from below by
\begin{align*}
    &
    \rho^2 ( P\cap Q \in \mathcal{E}' \mid P \in V_{\gamma_n+\gamma_n'})
    \geq 
    \rho_{0,p_1,1}^2 ( P\cap Q \in \mathcal{E}' )
    = 
    \rho_{0,p_1^2,1} ( P \in \mathcal{E}' ).
\end{align*}
    where the second inequality follows from Theorem~\ref{thm:stochdom}. From this, we obtain~\eqref{eq: part 2 2}.
	Combining~\eqref{eq: coupling 2} and~\eqref{eq: part 2 2}, the desired conclusion immediately follows.
\end{proof}

We note that the point where the preceding proof fails for $i\geq 2$ is in establishing that  $\rho_{0,p_1^2,1} ( P \in \mathcal{E}' )$ is bounded away from zero.

Before establishing the behaviour of the Marcu-Fredenhagen ratio in the region in~\ref{figure: nonzero region 2}, we state and prove a discrete isoperimetric inequality. 
\begin{lemma}\label{lemma:isoperimetric}
For any $1$-cycle $\gamma\in Z_1\paren{\Z^d;\Z_q}$
$$\mathrm{Area}(\gamma)\leq  \frac{d-1}{8d} |\gamma|^2$$
where $|\gamma|$ denotes the number of edges in the support of $\gamma.$
\end{lemma}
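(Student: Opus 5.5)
We will prove the bound by repeatedly projecting $\gamma$ onto coordinate hyperplanes, one direction at a time. Each projection is filled by "shadow strips" of plaquettes, and the total number of plaquettes used is then estimated by Cauchy--Schwarz.

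\textbf{Setup.} For $j=1,\dots,d$ let $n_j$ be the number of edges of $\operatorname{support}\gamma$ parallel to the $j$-th coordinate axis, so that $\sum_j n_j=|\gamma|$. The target is the bound
$$\mathrm{Area}(\gamma)\le \frac14\sum_{1\le j<k\le d} n_jn_k .$$
Since $\sum_{j<k}n_jn_k=\frac12\bigl(|\gamma|^2-\sum_j n_j^2\bigr)\le \frac12\bigl(1-\frac1d\bigr)|\gamma|^2$ by Cauchy--Schwarz, this gives $\frac{d-1}{8d}|\gamma|^2$, which is exactly the claimed constant. So everything reduces to a one-direction sweeping step, iterated $d-1$ times.

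\textbf{Sweeping step.} Fix direction $1$ and an integer level $c$. Let $\pi$ be the projection onto the hyperplane $\{x_1=c\}$; it sends a direction-$1$ edge to a vertex and any other edge $e$ to a parallel edge $\pi(e)$. For each edge $e$ of $\gamma$ not parallel to $e_1$, let $S(e)$ be the $2$-chain of the $|x_1(e)-c|$ plaquettes swept out by translating $e$ in direction $1$ to $\pi(e)$, oriented so that $\partial S(e)=e-\pi(e)+(\text{two side paths in direction }1)$. Set $\tau_1=\sum_e \gamma(e)\,S(e)$.

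The side paths attached to a vertex $v$ appear with total coefficient equal to the contribution of the non-$1$ edges of $\gamma$ at $v$ to $\partial\gamma(v)$. Because $\partial\gamma=0$, this contribution equals minus the contribution of the direction-$1$ edges at $v$. Consequently the side paths telescope, column by column, against the direction-$1$ edges of $\gamma$, and we get
$$\partial\tau_1=\gamma-\pi(\gamma).$$
This is a routine but careful sign check. The projected chain $\pi(\gamma)$ is a $1$-cycle in a copy of $\Z^{d-1}$. It has no direction-$1$ edges, and for $k\ge2$ at most $n_k$ edges in direction $k$ (projection can only merge or cancel edges). Moreover $|\tau_1|\le\sum_{e\not\parallel e_1}|x_1(e)-c|$.

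\textbf{Iteration.} Apply the sweeping step successively in directions $1,2,\dots,d-1$. After all of them we are left with a $1$-cycle supported on a single line parallel to $e_d$. Such a cycle is zero, because $Z_1(\Z;\Z_q)=0$. Summing the strip chains gives a $2$-chain $\tau$ with $\partial\tau=\gamma$ and
$$|\tau|\le\sum_{j=1}^{d-1}(\text{cost of step } j).$$
It therefore suffices to show that, with a good choice of $c$, the cost of step $j$ is at most $\frac14 n_j\sum_{k>j}n_k$, where the $n$'s are the edge counts of the current cycle (and these are bounded by the original ones).

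\textbf{Main obstacle: the width estimate.} We need
$$\sum_{e\not\parallel e_1}|x_1(e)-c|\le \frac14\, n_1\,(|\gamma|-n_1).$$
Write this sum as a sum over half-integer levels $t+\frac12$ between $c$ and each edge. For every level strictly inside the $x_1$-range of $\operatorname{support}\gamma$, the cycle must cross that level. By the cycle condition, projected to the $x_1$-axis, it crosses it at least twice. Hence the number of levels $r$ satisfies $r\le n_1/2$.

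We would first try taking $c$ to be a median of the $x_1$-coordinates of the non-$1$ edges. Then the sum is $\sum_{\text{levels}}\min(\#\text{below},\#\text{above})$. One must pair this with the crossing count more carefully than the crude bound $|x_1(e)-c|\le\lceil r/2\rceil$, which loses an additive term when $r$ is odd. The plan is to bound $\sum_{\text{levels}}\min(a_t,m-a_t)$, with $m=|\gamma|-n_1$, by $\frac{m}{2}\cdot\frac r2\le\frac{m n_1}{4}$, averaging over the two best integer choices of $c$ if the parity causes trouble. If that fails, we would allow $c$ to be a half-integer, interpreting the strips as ending on the shifted lattice and fixing up with one extra row, and check that the loss is absorbed.
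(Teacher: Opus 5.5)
Your route is the paper's: sweep one coordinate direction at a time, fill the swept region with strips, and bound strip lengths by counting crossed levels. The paper runs this as an induction on $d$ and optimizes at $S=T/d$; its displayed $\frac{T(T-S)}{4}$ has to be read as $\frac{S(T-S)}{4}$ for this to give $\frac{d-1}{8d}$. You instead unroll it to $\frac14\sum_{j<k}n_jn_k$ and finish with Cauchy--Schwarz, which is equivalent, and your identity $\partial\tau_1=\gamma-\pi(\gamma)$ is correct.

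The genuine gap is the crossing claim. It is true that every level strictly inside the $x_1$-range of the support is crossed, but only when the support of $\gamma$ is connected, and your scheme applies one global level $c$ per direction to whatever the current cycle is. Take two unit squares in the planes $\{x_1=0\}$ and $\{x_1=L\}$, with edges in directions $2,3$. Then $n_1=0$, yet every choice of $c$ costs at least $4L$, so the reduction to ``cost of step $j$ at most $\frac14 n_j\sum_{k>j}n_k$'' is false.

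Nor can you assume connectivity once at the start, because cancellation can disconnect $\pi(\gamma)$. Let $A,B$ be unit squares in $\{x_1=0\}$, and let $D$ be a strip of plaquettes in $\{0\le x_1\le 1\}$ of length about $L$ running from a corner of $A$ to a corner of $B$. Then $\gamma=A+B+\partial D$ has connected support, but its median-level projection is $A+B$. The next single-level sweep then costs of order $L$ against a current-count budget of $4$.

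The repair is to prove $\mathrm{Area}(\gamma)\le\frac14\sum_{j<k}n_jn_k$ for all cycles by induction on $d$:
\begin{itemize}
\item restrict $\gamma$ to each connected component of its support (each restriction is again a cycle);
\item sweep each component at its own level;
\item apply the hypothesis to each projected cycle in its own copy of $\Z^{d-1}$;
\item sum, using that area is subadditive while $\sum_{j<k}n_jn_k$ is superadditive over disjoint-support decompositions.
\end{itemize}
The paper's write-up also silently treats $\gamma$ as a single loop.

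For a connected component, the width estimate you flagged as the main obstacle closes directly, with no parity fix or half-integer $c$. Some support edge crosses each interior level $t+\frac12$. The oriented coefficients of the crossing edges sum to $0$ in $\Z_q$ (evaluate $\delta\mathbf{1}_{\{x_1\le t\}}$ on $\gamma$), so at least two edges cross, and hence there are $r\le n_1/2$ interior levels. Take $c$ to be an integer median of the transverse $x_1$-coordinates. Then the cost is $\sum_t\min(a_t,m-a_t)$ over at most $r$ levels, each term is at most $m/2$, and the total is at most $\frac m2 r\le\frac{mn_1}{4}$.

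The bound $\frac m2\cdot\frac r2$ you wrote is false (take $r=1$ with the transverse edges split evenly between the two levels), but it is not needed. This median count is also cleaner than the paper's step of placing $\gamma$ in the slab $\Z^{d-1}\times[-\lfloor S/4\rfloor,\lfloor S/4\rfloor]$. That placement is impossible when the number of crossed levels is odd (e.g.\ $S=2$), which is exactly the parity loss you were worried about.
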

\begin{proof}
    We will prove the statement using induction on the dimension $d.$ We abuse notation and use the symbol $\gamma$ refer to both the loop itself and the corresponding element of $Z^1\paren{\Z^d;\,\Z_q}.$  Suppose that $|\gamma|=T$ and that $S$ of the $T$ edges in its support are in the direction of the $x_d$ axis. By translating $\gamma$ if necessary, we may assume that it intersects the hyperplane $x_d=0$ and is contained in the slab  $\Z^{d-1}\times \brac{-\lfloor{\frac{S}{4}} \rfloor, \lfloor{\frac{S}{4}} \rfloor}.$ Let $Y$ be the projection of the support of $\gamma$ onto the hyperplane $x_d=0,$  let $C$ be the cylinder $Y\times \brac{-\lfloor{\frac{V}{4}} \rfloor, \lfloor{\frac{V}{4}} \rfloor},$ and let $C'$ be the union of the bounded components of the complement  $C\setminus \paren{\gamma \cup Y}.$ Since the inclusion $Y\hookrightarrow C$ induces an isomorphism on homology, there is $\tau\in C_2\paren{C;\Z_q}$ and a $\gamma'\in Z_1\paren{Y;\,\Z_q}$ so that $\gamma=\gamma'+\partial \tau.$ In fact, we may find a such a $\tau$ that is supported on $C'.$

    We now bound the area of $C'$ from above and below. Towards that end, we write $C'$ as the union of contributions from the edges of $\gamma.$ Let $e$ be an edge of $\gamma$ that is parallel to the hyperplane $x_d=0.$ We say that a cell $\sigma$ of~$C$ is in between $\gamma$ and $Y$ if the projections of $e$ and $\sigma$ onto the hyperplane $x_d=0$ coincide and the~$x_d$-coordinates of the points of $\sigma$ are between those of $\gamma$ and $e.$ If there is no other edge $e'$ of $\gamma$ so that $e$ is between $e'$ and $Y,$ we set $C_{e}$ to consist of all $2$-cells between $e$ and $\gamma.$ Otherwise, we set $C_{e}=\varnothing.$ Then $C'\subset \bigcup_{e\in \gamma}C_e$, and hence
    $$
        \mathrm{Area}\paren{\tau} \leq  \mathrm{Area}\paren{C'}\leq \sum_{e\in \gamma} \abs{C_e}\leq \frac{T\paren{T-S}}{4}.
    $$
    From this, it follows that 
    $$
        \mathrm{Area}(\gamma) \leq \frac{T\paren{T-S}}{4}+\mathrm{Area}_{d-1}\paren{\gamma'},
    $$
    where $\mathrm{Area}_{d-1}\paren{\gamma'}$ is the area of $\gamma'$ as an element of $Z_1\paren{\Z^{d-1};\,\Z_q}.$ 
    Therefore, if we set
    $$
        f_d(T) = \max_{\gamma:|\gamma|\leq T}\mathrm{Area}(\gamma),
    $$
    then, by induction, we have
$$f_d\paren{T}\leq  \frac{T\paren{T-S}}{4}+f_{d-1}\paren{T-S}\leq \frac{T\paren{T-S}}{4}+\frac{d-2}{8\paren{d-1}}\paren{T-S}^2.$$
The function on the right-hand side of the previous equation is maximized when $S=T/d$; plugging that in yields the desired formula.
\end{proof}

\begin{prop}\label{prop: nonzero 2}Assume the same hypotheses as in Theorem~\ref{thm:nontrivial}. 
	If $i=1$ and \( p_2  \) is sufficiently small (see Figure~\ref{figure: nonzero region 2}), then \[ \liminf_{n \to \infty} R(p_2,p_1,n) >0. \]
\end{prop}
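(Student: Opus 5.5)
We plan to reuse the switching framework from the proof of Proposition~\ref{prop: nonzero 3}. That argument established, for any pair of disjoint paths, the inequality
\[
\hat R(p_2,p_1,n)\geq \rho^2\bigl((P,Q)\in\mathcal{E}\mid P\in V_{\gamma_n}\bigr).
\]
This reduction used only Corollary~\ref{cor:switching} and Theorem~\ref{thm:W=V}, not the size of $p_1$. The task is therefore to show that, when $p_2$ is small, the conditional probability above stays bounded below uniformly in $n$ and in $p_1$. The event $\mathcal{E}$ asks that the components $G_{\gamma_n'}[P\cup Q]$ and $G_{\gamma_n''}[P\cup Q]$ be disjoint. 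It suffices that no strongly connected cluster of $2$-cells of $P_2\cup Q_2$ touches both $\gamma_n'$ and $\gamma_n''$. Since $\gamma_n'$ and $\gamma_n''$ meet only near the two vertices $x_n,y_n$, the danger is concentrated near those two corners.

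Conditioning on $V_{\gamma_n}$ is the main difficulty, because it forces surface-like structure. On $V_{\gamma_n}$ there is a $2$-chain $\tau\in C_2(P_2)$ with $\gamma_n-\partial\tau$ supported on $P_1$. We would handle this by a cost comparison, with the following steps.
\begin{enumerate}
\item Let $\gamma$ range over possible $1$-cycles $\gamma_n-c$, where $c$ is a chain on $P_1$. The relevant $2$-cells of $P_2$ must cover a spanning surface of $\gamma$. By Lemma~\ref{lemma:isoperimetric}, together with Proposition~\ref{prop:c3} and the stochastic domination of Theorem~\ref{thm:stochdom}, a bad configuration requires a long strongly connected plaquette cluster.
\item Such a cluster links $\gamma_n'$ to $\gamma_n''$ at distance $r$ from the corners $x_n,y_n$. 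Its probability we would bound by $e^{-c(p_2)r}$ with $c(p_2)\to\infty$ as $p_2\to0$. This would follow from Theorem~\ref{thm:stochdom} and a Peierls count of strongly connected plaquette sets.
\item Any such cluster can be removed and replaced by $P_1$-edges along the dual boundary, at cost at most $(q/p_1)^{O(r)}$. Here we use the lower bound in Theorem~\ref{thm:stochdom} and positive association, Theorem~\ref{thm:positive}. This lets us compare to $\rho(V_{\gamma_n})$ and bound the conditional probability of a linking cluster at scale $r$ by $e^{-(c(p_2)-C\log(q/p_1))r}$.
\end{enumerate}
Summing over $r$ gives a bound bounded away from $1$ on the probability that $\mathcal{E}$ fails.

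The hardest step is making the third step uniform in small $p_1$. When $p_1\to0$, replacing clusters by $P_1$-edges becomes expensive. We would then instead use the perimeter/area comparison directly. By Proposition~\ref{prop: perimeter} and Theorem~\ref{thm:areaperimeter}, in the regime $p_2<p'$ the dominant way to realize $V_{\gamma_n}$ is through $P_1$-edges, since spanning surfaces cost area. A conditioned configuration must therefore already contain a $P_1$ structure near $\gamma_n$, which separates the halves up to corner effects. Finally, we would handle the two corners by a local finite-energy modification: force all plaquettes in a fixed neighborhood of $x_n$ and $y_n$ closed, at probability cost bounded below. Combined with the absence of long clusters, this yields $\liminf_n \rho^2((P,Q)\in\mathcal{E}\mid P\in V_{\gamma_n})>0$. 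The modification must be chosen so that it preserves $V_{\gamma_n}$, and verifying this is the delicate point.
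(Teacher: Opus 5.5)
Your reduction to a lower bound on $\rho^2\bigl((P,Q)\in\mathcal{E}\mid P\in V_{\gamma_n}\bigr)$ is the same as the paper's. You also correctly identify what remains: exponential decay of the $P_2$-clusters attached to $\gamma_n$ under the \emph{conditioned} measure, at a rate that is large when $p_2$ is small, uniformly in $p_1$. Neither of your two routes delivers this, so there is a genuine gap.

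\textbf{Step (3) is not uniform in $p_1$.} It is a finite-energy surgery. Deleting the cluster $R$ of an edge of $\gamma_n$ and opening in $P_1$ the $m$ edges of $\gamma_n$ it covers preserves $V_{\gamma_n}$. It changes the weight by at most $k_2^{\abs{R_2}}(q/k_1)^{m}$. Here $m$ is bounded by $4\abs{R_2}$, not by the distance $r$ to the corners, so gain and cost must both be measured in cluster size. The Peierls sum then converges only if $k_2(q/k_1)^4e^{\lambda}$ is small. That threshold for $p_2$ collapses as $p_1\to 0$, whereas the statement (cf.\ Theorem~\ref{thm:nontrivial}(2) and Figure~\ref{figure: nonzero region 2}) needs a threshold independent of $p_1$.

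\textbf{The small-$p_1$ fallback is not an argument.} There are three problems.
\begin{itemize}
\item Theorem~\ref{thm:areaperimeter} and Proposition~\ref{prop: perimeter} only estimate unconditional probabilities $\rho(V_\gamma)$. They say nothing about where plaquettes sit under $\rho(\cdot\mid V_{\gamma_n})$.
\item A $P_1$-structure near $\gamma_n$ does not separate the halves in the sense of $\mathcal{E}$. Adjacency in $G[P\cup Q]$ is through shared edges whether or not those edges lie in $P_1\cup Q_1$. A plaquette leaves $G$ only if its whole boundary lies in $P_1\cup Q_1$.
\item Closing plaquettes near $x_n,y_n$ is a decreasing change that can destroy the increasing event $V_{\gamma_n}$. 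Repairing it by opening edges costs a power of $p_1/q$, which is again non-uniform.
\end{itemize}

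\textbf{The missing idea is how to pay for the factor $k_1^{-m}$ without losing $q^m$.} The paper fixes $\epsilon\in\support\gamma_n$ and a candidate cluster $R=P^\epsilon$. It applies Corollary~\ref{cor:switching} to get the exact identity
\[
\rho_N(P')\rho_N(Q(R))=\rho_N(P)\rho_N(\Gamma_n).
\]
Here $P'$ is $P$ with the cluster removed and the covered edges of $\gamma_n$ added to $P_1$. The reference pair $\Gamma_n$ has no $2$-cells and has $1$-cells $\support\gamma_n$. The cohomology factors cancel exactly, which yields
\[
\rho(P^\epsilon=R\mid V_{\gamma_n})\le k_2^{\abs{R_2}}k_1^{\abs{R_1}-\abs{\gamma_n}}.
\]
If $\abs{R_1}\ge\abs{\gamma_n}$, this is at most $k_2^{\abs{R_2}}$. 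Otherwise the paper writes $\gamma_n-\tau=\partial\sigma$ with $\tau$ a $1$-cycle on $R_1$ and $\sigma$ a $2$-chain on $R_2$. It uses Lemma~\ref{lemma:isoperimetric} to force $\abs{R_2}\ge \mathrm{Area}(\gamma_n)-\mathrm{Area}(\tau)$ to be large. This absorbs $k_1^{\abs{R_1}-\abs{\gamma_n}}$ into $(k_2+o_n(1))^{\abs{R_2}}$.

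A Peierls count then finishes the decay estimate: there are $e^{\lambda N}2^{4N}$ choices of $(R_2,R_1)$ with $\abs{R_2}=N$. This gives decay uniformly in $p_1$ for large $n$, which is all the $\liminf$ needs. The paper concludes by standard arguments, and with a large decay rate a union bound over edges of $\gamma_n$ near $x_n,y_n$ suffices, so no corner surgery is needed.

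You do invoke Lemma~\ref{lemma:isoperimetric}, but only to say that bad configurations require long clusters. Its essential role is to offset the $k_1$-cost of replacing a cluster by edges of $\gamma_n$.
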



\begin{proof}

We reuse notation from the previous proof except that we redefine $P^{\gamma}$ below. We now argue that the right-hand side of~\eqref{eq: coupling 2}, i.e., 
$$\rho^2   \bigl((P,Q) \in \mathcal{E} \mid P \in V_{\gamma_n} \bigr)$$
can be bounded from below when $p_2$ is sufficiently small, uniformly in \( p_1 \) and \( |\gamma_n|,\) where we recall that $\gamma_n=\gamma_n'+\gamma_n''.$ To this end, we show that when $P$ is sampled from the CPP conditional on the event $V_{\gamma_n}$ and $p_2$ is small, the strongly connected components of $P_2$ exhibit exponential decay uniformly in \( p_1 \) and \( |\gamma_n|\). The desired conclusion then follows from standard arguments. Since $\rho$ is stochastically decreasing in $p_1,$ we may assume that $p_1<1/2.$

For an edge $\epsilon$ and a pair of percolation subcomplexes $P=\paren{P_2,P_1}$, let $P_2^{\epsilon}$ be the $2$-dimensional percolation subcomplex whose $2$-cells are the strongly connected component of $\epsilon$ in $P_2$ and set
$$P^{\epsilon}=(P_2^{\epsilon},( P_2^{\epsilon))^{\brac{1}}\cap P_1}.$$ Also, let  $\Gamma_n=\paren{X^{\paren{1}},\support\gamma_n \cup X^{1}}.$
Fix $\epsilon\in\support\gamma_n,$ let $R=\paren{R_2,R_1}$ satisfy $R=R^{\epsilon},$ and let
    $$
        Q = 
        Q(R)
        =
        R\cup \bigl(\Gamma_n\setminus ( P_2^{\epsilon})^{\brac{1}} \bigr).$$
    Also, set
    $$
        P'
        =
        ( P\smallsetminus P^{\epsilon}) \cup \bigl( (P_2^{\epsilon})^{\brac{1}} \cap \Gamma_n\bigr).
    $$
By construction, if $P\in V_{\gamma_n}$, then $ Q\paren{P^{\epsilon}},P'\in V_{\gamma_n}.$ 
Moreover, by applying Corollary~\ref{cor:switching} with  ${\paren{X_1,A_1}={P\smallsetminus P^{\epsilon}},}$  ${\paren{Y_1,B_1}=\paren{P_2^{\epsilon}}^{\brac{1}} \cap \Gamma_n,}$ ${\paren{X_2,A_2}=\Gamma_n\setminus \paren{P_2^{\epsilon}}^{\brac{1}},}$ and  $\paren{Y_2,B_2}=P^{\epsilon},$ we obtain
    $$
        \rho_N(P') \rho_N(Q)
        =
        \rho_N(p)\rho_N(\Gamma_n).
    $$
    This implies in particular that
    \begin{align*}
        \rho_N( P^\epsilon   = R \mid P \in V_{\gamma_n}) &=
        \frac{\sum_{P \in V_{\gamma_n}} \rho_N(P) \mathbf{1}(P^\epsilon = R)}{\sum_{P \in V_{\gamma_n}} \rho_N(P)} 
        \leq 
        \frac{\sum_{P \in V_{\gamma_n}} \rho_N(P) \mathbf{1}(P^\epsilon = R)}{\sum_{P \in V_{\gamma_n}} \rho_N(P')\mathbf{1}(P^\epsilon = R)} 
        \\
       & = 
       \rho_N(Q)/\rho_N
       \leq \paren{\frac{p_2}{1-p_2}}^{\abs{R_2}}\paren{\frac{p_1}{1-p_1}}^{\abs{R_1}-\abs{\gamma_n}}\frac{|{H^1({\Lambda_N^{\paren{1}};\,\Z_q})}|}{|H^1({\Lambda_N^{\paren{1}};\,\Z_q})|-1}\\
       \intertext{and}\\
         \rho( P^\epsilon   = R \mid P \in V_{\gamma_n})&\leq \paren{\frac{p_2}{1-p_2}}^{\abs{R_2}}\paren{\frac{p_1}{1-p_1}}^{\abs{R_1}-\abs{\gamma_n}}.
        \end{align*}

We now consider two cases. If
$\abs{R_1}\geq \abs{\gamma_n}$
then
$$\rho( P^\epsilon   = R \mid P \in V_{\gamma_n})\leq \paren{\frac{p_2}{1-p_2}}^{\abs{R_2}}$$
where we used that $p_1<1/2.$ If instead $\abs{R_1}< \abs{\gamma_n}$, we argue that any witness of $V_{\gamma_n}$ must have many two-cells in $R_2.$ Since $\paren{R_2,R_1}\in V_{\gamma_n}$ there there is a $1$-cycle $\tau\in Z_1\paren{R_1;\,\Z_q}$ and a $2$-chain $\sigma\in C_2\paren{R_2;\Z_q}$ so that $\partial \sigma=\gamma_n-\tau.$ Recall that the area of a $1$-cycle is the number of $2$-cells in the support of a minimal null-homology. Applying Lemma~\ref{lemma:isoperimetric} to $\tau$ yields that
$$\abs{R_2}\geq \mathrm{Area}(\Gamma_n)-\mathrm{Area}\paren{\tau}\geq \hat{c}\paren{n^2 -\abs{R_1}^2},$$
where $\hat{c}=\frac{d-1}{8d}.$ 
Then
$$\frac{\abs{\gamma_n}-\abs{R_1}}{\abs{R_2}}\leq \frac{4n-\abs{R_1}}{n^2 -\hat{c}\abs{R_1}^2}=\frac{1}{\nicefrac{n}{4}+\hat{c}\abs{R_1}}$$
so
    \begin{align*}
        \rho( P^\epsilon   = R \mid P \in V_{\gamma_n})  \leq & \paren{\frac{p_2}{1-p_2}}^{\abs{R_2}}\paren{\frac{p_1}{1-p_1}}^{\abs{R_1}-\abs{\gamma_n}}\paren{\paren{\frac{p_2}{1-p_2}} \paren{\frac{1-p_1}{p_1}}^{\frac{1}{\nicefrac{n}{4}+\hat{c}\abs{R_1}}}}^{\abs{R_2}}
        \\
        \leq & \paren{\frac{p_2+o_n(1)}{1-p_2+o_n(1)}}^{\abs{R_2}}.
    \end{align*}

Let $\Xi^2_{\epsilon,N}$ denote the collection of strongly connected subsets of $\Xi^2$ that are incident to $\epsilon$ and contain $N$ $2$-cells. By standard arguments, there is a $\lambda>0$ so that
$|\Xi^2_{\epsilon,N}|<e^{\lambda N}.$ 
Thus
\begin{align*}
\rho\paren{\abs{P_2^{\epsilon}}=N\mid P \in V_{\gamma_n}}&=\sum_{R_2\in \Xi^2_{\epsilon,N}}\sum_{R_1\subset R_2^{\brac{1}}}\rho(p)\mathbf{1}\paren{P^\epsilon = \paren{R_2,R_1}, P\in V_{\gamma}}\\
&<  e^{\lambda N} 2^{4 N} \paren{\frac{p_2+o_n(1)}{1-p_2+o_n(1)}}^{\abs{R_2}}
\end{align*}
which decays exponentially in $N$ when $p_2$ is sufficiently small and $n$ is sufficiently large. 
\end{proof}

\section*{Acknowledgments}
We'd like to thank Paul Duncan for interesting discussions and comments on an earlier draft of this paper, and Fedor Manin for suggesting the proof of Lemma~\ref{lemma:isoperimetric}.

\newpage
\bibliographystyle{alpha}
\bibliography{refs}

@article{F24,
    author = "Forsström, Malin Palö",
    title = "{{W}ilson Lines in the Abelian Lattice {H}iggs Model}",
    doi = "10.1007/s00220-024-05128-x",
    journal = "Communications in Mathematical Physics",
    volume = "405",
    issue = "11",
    number = "275",
    year = "2024"
}

@article{A25,
  title={Geometric Analysis of {I}sing Models, Part {III}},
  author={Aizenman, Michael},
  journal={Mathematical Physics, Analysis and Geometry},
  volume={28},
  number={4},
  pages={32},
  year={2025},
  publisher={Springer}
}

@article{G16,
    author = "Grimmett, Geoffrey R.",
    title = "{Correlation inequalities for the {P}otts model}",
    doi = "10.2140/memocs.2016.4.327",
    journal = "Mathematics and Mechanics of Complex Systems",
    volume = "4",
    pages = "327-334",
    year = "2016"
}

@book{S95,
  address = {New York},
  author = {Spanier, Edwin Henry},
  publisher = {Springer},
  title = {Algebraic topology},
  year = 1966
}

@book{H02,

  address = {Cambridge},
  author = {Hatcher, Allen},

  publisher = {Cambridge University Press},
  title = {Algebraic topology},

  year = 2002
}

@article{FKG71,
  title={Correlation inequalities on some partially ordered sets},
  author={Fortuin, Cees M. and Kasteleyn, Pieter W. and Ginibre, Jean},
  journal={Communications in Mathematical Physics},
  volume={22},
  pages={89--103},
  year={1971},
  publisher={Springer}
}

@book{G06,
  address = {New York},
  author = {Grimmet, Geoffrey},
  publisher = {Springer},
  title = {The Random-Cluster Model},
  year = 2006
}

@article{FK72,
title = {On the random-cluster model: {I}. Introduction and relation to other models},
journal = {Physica},
volume = {57},
number = {4},
pages = {536-564},
year = {1972},
issn = {0031-8914},
doi = {https://doi.org/10.1016/0031-8914(72)90045-6},
url = {https://www.sciencedirect.com/science/article/pii/0031891472900456},
author = {Fortuin, Cees M. and Kasteleyn, Pieter W.}
}

@article{NSS21,
  title = {Self-Dual Criticality in Three-Dimensional {$\mathbb{Z}_{2}$} Gauge Theory with Matter},
  author = {Somoza, Andr\'es M. and Serna, Pablo and Nahum, Adam},
  journal = {Physical Review X},
  volume = {11},
  issue = {4},

  year = {2021},
  publisher = {American Physical Society},

}

@article{HS91,
  title = {Are sponge phases of membranes experimental gauge-{H}iggs systems?},
  author = {Huse, David A. and Leibler, Stanislas},
  journal = {Physical Review Letters},
  volume = {66},
  issue = {4},
  pages = {437--440},

  year = {1991},

  publisher = {American Physical Society},

}

@misc{F25a,
    author = {Forsstr\"om, Malin Palö and Viklund, Fredrik},
    title = "{A current expansion for {I}sing lattice gauge theory}",
    eprint = "2502.19942",
    archivePrefix = "arXiv",
    year = "2025"
}

@article{B88,
title = {Determination of the phase structure of the four-dimensional coupled gauge-{H}iggs {P}otts model},
journal = {Physics Letters B},
volume = {207},
number = {3},
pages = {300-304},
year = {1988},

author = {Baig, Mari\`a},

}

@article{duncan2025topological,
  title={Topological Phases in the Plaquette Random-Cluster Model and Potts Lattice Gauge Theory},
  author={Duncan, Paul and Schweinhart, Benjamin},
  journal={Communications in Mathematical Physics},
  volume={406},
  number={6},
  pages={145},
  year={2025},
  publisher={Springer}
}

@article{duncan2025sharp,
  title={A Sharp Deconfinement Transition for {P}otts Lattice Gauge Theory in Codimension Two},
  author={Duncan, Paul and Schweinhart, Benjamin},
  journal={Communications in Mathematical Physics},
  volume={406},
  number={7},
  pages={164},
  year={2025},
  publisher={Springer}
}

@article{duncan2025homological,
	title={Homological percolation on a torus: plaquettes and permutohedra},
	url={arxiv.org/abs/10.48550/arXiv.2011.11903},
	author={Duncan, Paul and Kahle, Matthew and Schweinhart, Benjamin},
	year={2025},
    journal={Annales de l'Institut Henri Poincar{\'e}, Probabilit{\'e}s et Statistiques},
}

@article{prcm-24,
  title={Some properties of the plaquette random-cluster model},
  author={Duncan, Paul and Schweinhart, Benjamin},
  journal={Mathematical Physics, Analysis and Geometry},
  volume={29},
  number={1},
  pages={2},
  year={2026},
  publisher={Springer}
}

@mastersthesis{shklarov,
	title={The {E}dwards--{S}okal Coupling for the {P}otts Higher Lattice Gauge Theory on $\mathbb{Z}^d$},
	author={Shklarov, Yakov},
	year={2023},
	school={The University of Victoria}
}

@article{bernoulli-cell-complexes,
	title={Tutte polynomials and random-cluster models in {B}ernoulli cell complexes},
	volume={B59},
	journal={RIMS Kokyuroku Bessatsu},
	publisher={Research Institute for Mathematical Sciences, Kyoto University},
	author={Hiraoka, Yasuaki and Shirai, Tomoyuki},
	year={2016},
	month=jul,
	pages={289–304}
}

@article{FS79,
  title = {Phase diagrams of lattice gauge theories with {H}iggs fields},
  author = {Fradkin, Eduardo and Shenker, Stephen H.},
  journal = {Phys. Rev. D},
  volume = {19},
  issue = {12},
  pages = {3682--3697},
  year = {1979},
  month = {Jun},
  publisher = {American Physical Society},
  doi = {10.1103/PhysRevD.19.3682},
}

@misc{F25b,
      title={Ornstein--{Z}ernike decay of {W}ilson line observables in the free phase of the \( \mathbb{Z}_2 \) lattice {H}iggs model}, 
      author={Malin Palö Forsström},
      year={2025},
      eprint={2504.10909},
      archivePrefix={arXiv},
      primaryClass={math.PR},
      url={https://arxiv.org/abs/2504.10909}, 
}

@article{hansen2025general,
  title={A General Coupling for {I}sing Models and Beyond},
  author={Hansen, Ulrik Thinggaard and Jiang, Jianping and Klausen, Frederik Ravn},
  journal={arXiv preprint arXiv:2506.10765},
  year={2025}
}

@article{zhang2020loop,
  title={Loop-cluster coupling and algorithm for classical statistical models},
  author={Zhang, Lei and Michel, Manon and El{\c{c}}i, Eren M and Deng, Youjin},
  journal={Physical Review Letters},
  volume={125},
  number={20},
  pages={200603},
  year={2020},
  publisher={APS}
}

@article {aizenman1983sharp,
    AUTHOR = {Aizenman, Michael and Chayes, Jennifer Tour and Chayes, Lincoln and Fr\"{o}hlich, J\"urg
              and Russo, Lucio},
     TITLE = {On a sharp transition from area law to perimeter law in a
              system of random surfaces},
   JOURNAL = {Communications in Mathematical Physics},
  FJOURNAL = {Communications in Mathematical Physics},
    VOLUME = {92},
      YEAR = {1983},
    NUMBER = {1},
     PAGES = {19--69},
      ISSN = {0010-3616},
   MRCLASS = {82A05 (60K35 81E25 82A42)},
  MRNUMBER = {728447},
MRREVIEWER = {J. Theodore Cox},
}

@book{kaczynski2004computational,
  title={Computational homology},
  author={Kaczynski, Tomasz and Mischaikow, Konstantin Michael and Mrozek, Marian},
  year={2004},
  publisher={Springer},
  address={Berlin Heidelberg}
}

@book{saveliev2016topology,
  title={Topology illustrated},
  author={Saveliev, Peter},
  year={2016},
  publisher={Independently published},
  address={}
}

@book{bredon2013topology,
  title={Topology and geometry},
  author={Bredon, Glen E},
  volume={139},
  year={2013},
  publisher={Springer Science \& Business Media}
}

@article{fredenhagen1988dual,
  title={Dual interpretation of order parameters for lattice gauge theories with matter fields},
  author={Fredenhagen, Klaus and Marcu, Mihail},
  journal={Nuclear Physics B-Proceedings Supplements},
  volume={4},
  pages={352--357},
  year={1988},
  publisher={Elsevier}
}

@article{bricmont1985statistical,
  title={Statistical mechanical methods in particle structure analysis of lattice field theories:({I}). General theory},
  author={Bricmont, Jean and Fr{\"o}hlich, J},
  journal={Nuclear Physics B},
  volume={251},
  pages={517--552},
  year={1985},
  publisher={Elsevier}
}

@article{p2024phase,
  title={The phase transition of the {M}arcu--{F}redenhagen ratio in the abelian lattice {H}iggs model},
  author={Forsstr{\"o}m, Malin Palö},
  journal={Electronic Journal of Probability},
  volume={29},
  pages={1--36},
  year={2024},
  publisher={The Institute of Mathematical Statistics and the Bernoulli Society}
}

@article{duminil2019sharp,
  title={Sharp phase transition for the random-cluster and {P}otts models via decision trees},
  author={Duminil-Copin, Hugo and Raoufi, Aran and Tassion, Vincent},
  journal={Annals of Mathematics},
  volume={189},
  number={1},
  pages={75--99},
  year={2019},
}

@article{stahl2026slow,
  title={Slow mixing and emergent one-form symmetries in three-dimensional {$\mathbb{Z}_2$} gauge theory},
  author={Stahl, Charles and Placke, Benedikt and Khemani, Vedika and Li, Yaodong},
  journal={arXiv preprint arXiv:2601.06010},
  year={2026}
}

@article{wegner,
  title={Duality in generalized {I}sing models and phase transitions without local order parameters},
  author={Wegner, Franz J},
  journal={Journal of Mathematical Physics},
  volume={12},
  number={10},
  pages={2259--2272},
  year={1971},
  publisher={American Institute of Physics}
}

@article{sw,
	title={Nonuniversal critical dynamics in {M}onte {C}arlo simulations},
	volume={58},
	DOI={10.1103/PhysRevLett.58.86},
	number={2},
	journal={Physical Review Letters},
	publisher={American Physical Society},
	author={Swendsen, Robert H. and Wang, Jian-Sheng},
	year={1987},
	month=jan
}

@article{edwards-sokal,
    title={Generalization of the {F}ortuin-{K}asteleyn-{S}wendsen-{W}ang representation and {M}onte {C}arlo algorithm},
    volume={38},
    DOI={10.1103/PhysRevD.38.2009},
    number={6},
    journal={Physical Review D},
    publisher={American Physical Society},
    author={Edwards, Robert G. and Sokal, Alan D.},
    year={1988},
    month=sep,
    pages={2009–2012}
}

@article{pizzimenti2025generalized,
  title={Generalized cluster algorithms for {P}otts lattice gauge theory},
  author={Pizzimenti, Anthony E and Duncan, Paul and Schweinhart, Benjamin},
  journal={arXiv preprint arXiv:2507.13503},
  year={2025}
}

@article{GLIOZZI2006120,
title = {The functional form of open Wilson lines in gauge theories coupled to matter},
journal = {Nuclear Physics B - Proceedings Supplements},
volume = {153},
number = {1},
pages = {120-127},
year = {2006},
note = {Proceedings of the Workshop on Computational Hadron Physics},
issn = {0920-5632},
doi = {https://doi.org/10.1016/j.nuclphysbps.2006.01.018},
url = {https://www.sciencedirect.com/science/article/pii/S0920563206000193},
author = {F. Gliozzi}
}

\end{document}